\documentclass[12pt,a4paper,reqno]{amsart}

\usepackage[a4paper,margin=35mm]{geometry}
\usepackage[T1]{fontenc}
\usepackage{lmodern}
\usepackage{mathtools,amssymb}
\usepackage{graphicx}
\usepackage{booktabs}
\usepackage{mathrsfs}
\usepackage{microtype}
\usepackage{xcolor}
\usepackage[
    colorlinks=true,
    linkcolor=blue, 
    citecolor=red,  
    urlcolor=blue   
]{hyperref}

\newtheorem{thm}{Theorem}
\newtheorem{theorem}{Theorem}[section]
\newtheorem{proposition}[theorem]{Proposition}
\newtheorem{lemma}[theorem]{Lemma}
\newtheorem{example}{Example}
\newtheorem{corollary}[theorem]{Corollary}

\theoremstyle{definition}

\theoremstyle{remark}
\newtheorem{remark}[theorem]{Remark}

\numberwithin{equation}{section}

\newcommand{\C}{\mathbb C}
\newcommand{\N}{\mathbb N}
\newcommand{\B}{\mathbb B}
\newcommand{\D}{\mathbb D}

\newcommand{\Sp}{S_p}
\newcommand{\Sq}{S_q}
\newcommand{\dd}{\,\mathrm d}
\newcommand{\Tr}{\operatorname{Tr}}

\newcommand{\supp}{\operatorname{supp}}
\newcommand{\dist}{\operatorname{dist}}

\title[Schatten class Toeplitz operators]{Small exponent Schatten class Toeplitz operators on convex domains of finite type }

\author[Mingjin Li]{Mingjin Li}
\address{Guizhou Normal University\\
School of Mathematical Sciences\\
 Guiyang, 550025 \\P. R. China}
\email{limingjin2022@163.com}

\author[Jianren Long]{Jianren Long\texorpdfstring{\textsuperscript{*}}{}}
\address{Guizhou Normal University\\
School of Mathematical Sciences\\
 Guiyang, 550025 \\P. R. China}
\email{longjianren2004@163.com}

\thanks{\textsuperscript{*}Corresponding author.}

\author[Lang Wang]{Lang Wang}
\address{Guizhou Normal University\\
School of Mathematical Sciences\\
 Guiyang, 550025, \\P. R. China}
\email{wanglang2020@amss.ac.cn}

\subjclass[2020]{Primary 32A36, 32A25; Secondary 32T15, 47B33, 47B35}
\keywords{Bergman space; convex domain of finite type; Toeplitz operators, Schatten class, weighted composition operators}
\date{} 

\begin{document}

\begin{abstract}
In this paper, we study the characterizations of Schatten \(p\)-class Toeplitz operators on smoothly bounded convex domains of finite type in \(\C^n\).  On  one hand, by using discrete Kobayashi lattice, we characterize the Schatten \(p\)-class Toeplitz operators by employing a probabilistic selection method for \(0<p<1\). At the same time, we show that the characterizations obtained by Kobayashi lattice are equivalent to the Berezin transform for \(p > \frac{n}{n+1}\). On the other hand, as an application, the criterion gives a  geometric characterization for Schatten \(p\)-class weighted composition operators for \(0<p<2\)  and also gives an analytic characterization for \(p>\frac{2n}{n+1}\). Our main results give an answer to the question that find a geometric or analytic characterization of the Schatten \(p\)-class  weighted composition operators whenever \(0<p<2\) raised by Xiao-Yang-Yuan \cite{Xiao2026}.
\end{abstract}

\maketitle

 \tableofcontents

\section{Introduction and main results}

 Let \(\Omega\) be a smoothly bounded convex domain of  finite type. For \(0<p<\infty\), the Lebesgue space \(L^p(\Omega)\) consists of all measurable functions on \(\Omega\) such that 
\[\|f\|_{L^p(\Omega)}=\left(\int_\Omega |f(z)|^p dV(z)\right)^\frac{1}{p}<\infty,\]
where \(dV\) denotes the Lebesgue measure on \(\Omega\). Let \(\mathcal{H}(\Omega)\) be the set of holomorphic functions on \(\Omega\) and the Bergman spaces \(A^p(\Omega)=L^p(\Omega)\bigcap \mathcal{H}(\Omega)\). For \(p=2\), \(A^2(\Omega)\) is a Hilbert space and the point evaluation functional 
\[L_z(f)=f(z), \,\, f\in  A^2(\Omega)\] is a continuous linear functional. By Riesz representation theorem, there exists an element \(K_z\in A^2(\Omega)\) with \(\|L_z\|=\|K_z\|_{A^2(\Omega)}\) such that 
\[L_z(f)=f(z)=\langle f, K_z\rangle=\int_\Omega f(\zeta)\overline{K_z(\zeta)}dV(\zeta )\]
for all \(f\in A^2(\Omega)\).
The function \(K_z(\zeta)\) is called the Bergman kernel which satisfies \[K_z(\zeta)=K_\Omega(\zeta,z)=\overline{K_\Omega(z,\zeta)}.\]
We write \(K_\Omega(z,\zeta)\) for \(K(z,\zeta)\) when it does not cause ambiguity.

For a non-negative Borel measure \(\mu\) on \(\Omega\),  the Bergman kernel induces the Toeplitz operator on \(A^2(\Omega)\) given by
\[T_\mu(f)(z)=\int_\Omega f(\zeta)K(z,\zeta)d\mu(\zeta),\]
which is the primary object in this paper. As far as we know, Luecking was probably the one of the first authors to consider \(T_\mu\) with measures as symbols, which characterized those \(\mu\) for which \(T_\mu\in S_p(A^2(\D))\), Luecking's method has turned out to be useful in subsequent research on concrete operator theory, see \cite{Bottcher2006,Luecking1987,Zhu2007} for details.

The study of Toeplitz operators on  domains in \(\C^n\) depends on the relevant estimates of the Bergman kernel on the domain, classical methods are no longer applicable to the setting of  domains in \(\C^n\).  However, for the case of strongly pseudoconvex domains, by using the asymptotic expansion of the Bergman kernel obtained by Fefferman \cite{Fefferman1974},  some scholars have obtained related results on Toeplitz operators on strongly pseudoconvex domains, see \cite{Abate2012,Abate2020,Cuvckovic2006,Englis2008} and therein references for details.    For weakly pseudoconvex domains,  although the asymptotic expansion of the Bergman kernel on such domains remains unknown, Khan-Liu-Thuc-Tien \cite{Khan2019,Khan2021} used the generalized Schur's test to give characterizations of the boundedness of Toeplitz operators on domains whose Bergman kernel satisfying the sharp \(\mathcal{B}\)-type condition. 
For more operators and the basic properties of the analytic function spaces on domains in \(\C^n\), see \cite{Bonami1999,Bonami2001,Bruna1998,Krantz1995} and therein references for details.

Recently, Xiao-Yang-Yuan \cite{Xiao2026} used intrinsic geometric quantities to characterize the boundedness, compactness, and Schatten \(p\)-classes of Toeplitz operators on the Bergman space over smoothly bounded convex domains of finite. In particular, they obtained the following result.

\begin{renewcommand}{\thethm}{\Alph{thm}}
\begin{thm}\cite[Theorem 3.1]{Xiao2026}\label{XiaoToeplitz}
   Let \(p\geq 1\), \(\mu\) be a non-negative Borel measure on \(\Omega\). Then \(T_\mu:A^2(\Omega)\to A^2(\Omega)\) is in the Schatten \(p\)-class if and only if there exists \(r_0>0\) such that 
   \begin{equation*}
 \int_\Omega
 \left(
   \int_\Omega
   \left(\frac{|K(z,w)|}{K(z,z)}\right)^2
   \frac{\dd\mu(w)}{V(E(z,r))}
 \right)^p
 \frac{\dd V(z)}{V(E(z,r))}<\infty.
\end{equation*}
\end{thm}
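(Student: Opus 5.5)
The plan is to reduce the statement to the classical trace/Berezin-transform machinery, using the geometry of the Kobayashi-type balls \(E(z,r)\) on convex domains of finite type. The expression inside the outer integral is
\[
\widetilde{\mu}_r(z):=\int_\Omega \frac{|K(z,w)|^2}{K(z,z)^2}\frac{\dd\mu(w)}{V(E(z,r))}.
\]
Since \(K(z,z)\asymp V(E(z,r))^{-1}\) on these domains (the McNeal-type estimates), \(\widetilde{\mu}_r(z)\) is comparable to the Berezin transform
\[
\widetilde{\mu}(z)=\langle T_\mu k_z,k_z\rangle=\int_\Omega |k_z(w)|^2\dd\mu(w),\qquad k_z=\frac{K_z}{\sqrt{K(z,z)}}.
\]
Likewise \(\dd V(z)/V(E(z,r))\asymp K(z,z)\dd V(z)=:\dd\lambda(z)\), the invariant measure. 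So the condition in the statement is equivalent to \(\widetilde{\mu}\in L^p(\Omega,\dd\lambda)\), and the theorem becomes the familiar statement that, for \(p\ge1\), \(T_\mu\in S_p\) if and only if \(\widetilde{\mu}\in L^p(\dd\lambda)\).

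\emph{Necessity.} Assume \(T_\mu\in S_p\) with \(p\ge1\); then \(T_\mu\ge0\). I would use the reproducing identity \(\int_\Omega \langle Ak_z,k_z\rangle\dd\lambda(z)=\Tr A\) for positive \(A\). Combining it with the Jensen-type inequality \(\langle Ak_z,k_z\rangle^p\le\langle A^pk_z,k_z\rangle\), valid for \(p\ge1\) and unit vectors \(k_z\), gives
\[
\int_\Omega\widetilde{\mu}^p\dd\lambda\le\Tr(T_\mu^p)<\infty.
\]

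\emph{Sufficiency.} First I would pass from the Berezin transform to averages over balls. Put \(\hat\mu_r(z)=\mu(E(z,r))/V(E(z,r))\). On \(E(z,r)\) one has \(|k_z(w)|^2\gtrsim K(z,z)\) for small \(r\), which is the lower kernel estimate on Kobayashi balls. Hence \(\hat\mu_r\lesssim\widetilde{\mu}\), and so \(\hat\mu_r\in L^p(\dd\lambda)\). Next take an \(r\)-lattice \(\{a_j\}\) with finite overlap of the balls \(E(a_j,2r)\). Then \(\sum_j\hat\mu_{2r}(a_j)^p<\infty\): compare \(\hat\mu_{2r}(a_j)\) with \(\hat\mu_r(z)\) for \(z\in E(a_j,r)\), using the doubling property of \(V(E(\cdot,r))\) and \(\lambda(E(a_j,r))\asymp1\). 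Finally, decompose \(\mu=\sum_j\mu_j\) with \(\mu_j=\mu|_{D_j}\), where \(\{D_j\}\) is a disjoint Borel partition subordinate to the lattice. Each \(T_{\mu_j}\) is positive with
\[
\|T_{\mu_j}\|_{S_p}\le\Tr T_{\mu_j}=\int_{D_j}K(w,w)\dd\mu(w)\lesssim\hat\mu_{2r}(a_j).
\]
This alone only gives \(p=1\). For general \(p\ge1\) I would instead interpolate: the map \(\{c_j\}\mapsto T_{\sum c_j\mu_j/\hat\mu(a_j)}\) sends \(\ell^1\) into \(S_1\) by the trace bound, and \(\ell^\infty\) into bounded operators by the Carleson-measure boundedness criterion, which uses \(\sup_j\hat\mu(a_j)<\infty\). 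Complex interpolation then gives \(\ell^p\to S_p\).

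The main obstacle is supplying the kernel estimates that these steps rely on: \(K(z,z)\asymp V(E(z,r))^{-1}\), the bound \(|K(z,w)|\gtrsim K(z,z)\) for \(w\in E(z,r)\), the doubling of the ball volumes, and the existence of lattices with finite overlap. On convex domains of finite type all of these come from McNeal's polydisc geometry, and I would take them as the standard inputs used by Xiao-Yang-Yuan. Once they are available, the operator-theoretic argument is the same as Zhu's for the unit ball.
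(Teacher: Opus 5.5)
Your proof is correct, and most of it matches what the paper records in Section~\ref{Berezin}. The reduction of the condition to $\widetilde\mu\in L^p(\dd\lambda)$ via \eqref{kernel-volume} is the same. Necessity through $\langle Tk_z,k_z\rangle^p\le\langle T^pk_z,k_z\rangle$ plus the trace identity is Proposition~\ref{prop:jensen}. The first sufficiency step, $\widetilde\mu\gtrsim\mathcal A_r\mu$ from \eqref{eq-lowerkernelestimates}, is Lemma~\ref{lem:berezin-sufficient}.

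The genuine difference is the last step: lattice summability $\Rightarrow T_\mu\in S_p$ for $p>1$. The paper has no argument of its own for this. Its lattice proof (Proposition~\ref{prop:sufficiency}) is restricted to $p\le1$, because it uses $\|\sum_jT_j\|_{S_p}^p\le\sum_j\|T_j\|_{S_p}^p$. For $p\ge1$ it defers to Xiao--Yang--Yuan, whose route is the Jensen inequality mentioned in the introduction:
\begin{itemize}
\item $T_\mu\lesssim T_{\widehat\mu_r\,dV}$ by the submean property;
\item for an eigenbasis $\{e_k\}$ of this compact positive operator, $\langle T_{\widehat\mu_r}e_k,e_k\rangle^p\le\int_\Omega|e_k|^2\widehat\mu_r^p\,\dd V$;
\item summing with $\sum_k|e_k|^2\le K(z,z)$ gives $\|T_\mu\|_{S_p}^p\lesssim\int_\Omega\widehat\mu_r^p\,\dd\lambda$ directly.
\end{itemize}

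Your complex interpolation between $\ell^1\to S_1$ (the trace of each $T_{\mu_j}$) and $\ell^\infty\to B(A^2(\Omega))$ (a uniform Carleson bound) is a valid alternative. It costs the Schatten-class interpolation theorem and a lattice. It also requires identifying the interpolated operator at $c_j=\widehat\mu_r(a_j)$ with $T_\mu$, via the monotone-convergence argument of Proposition~\ref{prop:sufficiency}. The Jensen route is more elementary and avoids discretization, but only works for $p\ge1$. That limitation is exactly why the paper needs different tools below $1$.

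Two small repairs are needed. First, for $z\in E(a_j,r)$ one only has $E(a_j,2r)\subset E(z,r\oplus 2r)$, not $E(a_j,2r)\subset E(z,r)$. So the comparison is $\widehat\mu_{2r}(a_j)\lesssim\widehat\mu_{r\oplus 2r}(z)$, and $r$ must be small enough that $r\oplus 2r$ still lies in the range of \eqref{eq-lowerkernelestimates}. Alternatively, normalize by $\widehat\mu_r(a_j)$ directly, since $D_j\subset E(a_j,r)$. Second, the $\ell^\infty$ endpoint uses no hypothesis on $\mu$ such as $\sup_j\widehat\mu(a_j)<\infty$. The normalized measure $\sigma=\sum_j\mu_j/\widehat\mu_r(a_j)$ automatically satisfies $\sigma(E(a_k,r))\lesssim|E(a_k,r)|$ by finite overlap, which is all the Carleson embedding needs.
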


  The case of \(0<p<1\) in Theorem \ref{XiaoToeplitz} is still unknown. Motivated by \cite[Theorem 7.16, Theorem 7.18]{Zhu2007}, by using the Kobayashi lattice (see Section \ref{Preliminaries}),  we give a geometric characterization of Schatten \(p\)-class Toeplitz operators for \(0 < p < 1\) and an analytic characterization by using Berezin transforms for \(\frac{n}{n+1} < p < 1\).  Our main result is as follows. (The definition of \(\widehat{\mu}_r(z)\), \(\widetilde{\mu}(z)\) and \(d\lambda(z)\), see Section \ref{sec-Notations}.)
  \begin{theorem}
\label{thm:main}
Let $\Omega\subset\C^n$ be a smoothly bounded convex domain of finite
type,  $\mu$ be a  non-negative Borel measure on $\Omega$, and
$p>0$. Then  there is $r_*>0$, depending only on $\Omega$, such that the
following assertions are equivalent.
\begin{enumerate}
\item [(i)] $T_\mu\in S_p(A^2(\Omega))$.
\item [(ii)] For  every  $0<r<r_*$,
      $\widehat\mu_r(z)\in L^p(\Omega,d\lambda)$.
\item [(iii)] For every  $0<r<r_*$, the  Kobayashi $r$-lattice $\{a_j\}$ satisfies
      \[
       \sum_j\bigl[\mu(E(a_j,r))K(a_j,a_j)\bigr]^p<\infty.
      \]
In particular, for \(r\in (0,r_*)\),
\begin{equation}\label{eq:main-quantitative}
 \|T_\mu\|_{S_p}^p
 \asymp
 \sum_j\bigl[\mu(E(a_j,r))K(a_j,a_j)\bigr]^p
 \asymp
 \int_\Omega\widehat\mu_r(z)^p\,d\lambda(z),
 \end{equation}
with constants independent of $\mu$.
\end{enumerate}
If \(p>\frac{n}{n+1}\), then the above conditions are also equivalent to the following.
 \begin{enumerate}
\item[(iv)] The function \(\widetilde{\mu}\in L^p(\Omega,d\lambda)\).
\end{enumerate}
\end{theorem}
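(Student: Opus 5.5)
Definitions used in this plan: $\widehat\mu_r(z)=\mu(E(z,r))/V(E(z,r))$, $d\lambda(z)=K(z,z)\,dV(z)$, and $\widetilde\mu(z)=\int_\Omega |k_z|^2\,d\mu$ with $k_z=K(\cdot,z)/\sqrt{K(z,z)}$. I will also use the standard geometric facts on convex finite type domains from the preliminaries: $V(E(z,r))\asymp K(z,z)^{-1}$; $|K(z,w)|\asymp K(z,z)$ for $w\in E(z,r)$ with $r$ small; and the Kobayashi $r$-lattice has bounded overlap of the enlarged balls $E(a_j,2r)$.

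\textbf{Step 1: (ii)$\Leftrightarrow$(iii).} This is the easy part. Since $V(E(z,r))K(z,z)\asymp 1$, we have $\int\widehat\mu_r^p\,d\lambda\asymp\int \mu(E(z,r))^pK(z,z)^{p}\,d\lambda(z)$. Split this integral over the lattice cells. For $z\in E(a_j,r)$, the ball $E(z,r)$ contains no more than the union of the neighbouring balls $E(a_k,2r)$, whose number is bounded. Conversely, $E(a_j,r)\subset E(z,2r)$, and the $\lambda$-measure of a cell is $\asymp 1$. Applying the elementary inequality $(\sum x_k)^p\le C\sum x_k^p$ over a bounded number of terms gives $\sum_j[\mu(E(a_j,r))K(a_j,a_j)]^p\lesssim\int\widehat\mu_{2r}^p\,d\lambda\lesssim\sum_j[\mu(E(a_j,4r))K(a_j,a_j)]^p$. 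Independence of $r$ then follows by covering $E(a_j,4r)$ with boundedly many $r'$-balls centred at points of a finer lattice. This is where $r_*$ enters: it must lie below the radius at which the kernel and volume comparisons hold.

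\textbf{Step 2: (iii)$\Rightarrow$(i) for $0<p\le1$.} Decompose $\mu=\sum_j\mu_j$ with $\mu_j=\mu|_{D_j}$, where $\{D_j\}$ is a disjoint partition subordinate to the lattice, $D_j\subset E(a_j,r)$. Each $T_{\mu_j}$ is positive, and for $p\le1$ the $p$-triangle inequality $\|\sum T_j\|_{S_p}^p\le\sum\|T_j\|_{S_p}^p$ applies. So it suffices to show $\|T_{\mu_j}\|_{S_p}\lesssim \mu(D_j)K(a_j,a_j)$. Write $T_{\mu_j}=\int_{D_j}k_w\otimes k_w\,K(w,w)\,d\mu(w)$. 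Its trace equals $\int_{D_j}K(w,w)\,d\mu\asymp\mu(D_j)K(a_j,a_j)$, which gives the $S_1$ bound.

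The obstacle here is that the $S_p$ norm with $p<1$ is not controlled by the trace unless the rank is effectively bounded. I will handle this in the spirit of Luecking and Zhu (Theorem 7.16 of \cite{Zhu2007}). Compare $T_{\mu_j}$ with $P_jT_{\mu_j}P_j$, where $P_j$ projects onto the span of $k_{a_j}$. Alternatively, bound $\|T_{\mu_j}\|_{S_p}^p$ by writing $T_{\mu_j}=A_jA_j^*$ with $A_j:L^2(\mu_j)\to A^2$, and expand $A_j^*f$ in a Taylor-type polynomial basis adapted to the polydisc-like ball $E(a_j,r)$. The $S_p$ norms of the pieces then decay geometrically by the Cauchy estimates on the larger ball $E(a_j,2r)$, so they are summable for every $p>0$.

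\textbf{Step 3: (i)$\Rightarrow$(iii).} Use the probabilistic selection method announced in the abstract. Split the lattice into $N$ subfamilies, each $R$-separated in the Kobayashi metric with $R$ large. For a separated subfamily $\{a_j\}_{j\in J}$, set $\nu=\sum_{j\in J}\mu|_{E(a_j,r)}\le\mu$, so $\|T_\nu\|_{S_p}\le\|T_\mu\|_{S_p}$. Let $\{e_j\}$ be an orthonormal basis and define $B e_j=k_{a_j}$. By separation and the off-diagonal decay of $|\langle k_{a_i},k_{a_j}\rangle|$, the operator $B$ is bounded. Hence $\|B^*T_\nu B\|_{S_p}\lesssim\|T_\nu\|_{S_p}$.

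The diagonal entries of $B^*T_\nu B$ are $\langle T_\nu k_{a_j},k_{a_j}\rangle\ge c\,\mu(E(a_j,r))K(a_j,a_j)$. For $p<1$ we cannot use $\sum|d_{jj}|^p\le\|D\|_{S_p}^p$ as a black box, because that inequality holds only for $p\ge1$, or for positive operators when $p\le1$ in the reverse direction. So I separate the diagonal from the off-diagonal part. Multiply by random signs $\epsilon_j$, or by a random subset of indices, and average $\|\mathrm{diag}\|$ against the full matrix. The off-diagonal part is bounded using $\|X\|_{S_p}^p\le\sum_{i,j}|x_{ij}|^p$, valid for $p\le1$, together with decay of order $e^{-cR}$. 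Choosing $R$ large absorbs this error. I expect this step to be the main difficulty.

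\textbf{Step 4: $p\ge1$, and (iv).} For $p\ge1$, the equivalence of (i) and (iii) follows from Theorem \ref{XiaoToeplitz} after comparison with $\widehat\mu_r$.

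For (iv), note first that $\widetilde\mu\gtrsim\widehat\mu_r$ pointwise, so $\widetilde\mu\in L^p(d\lambda)$ implies (ii). For the converse, write $\widetilde\mu(z)\lesssim\sum_j\mu(E(a_j,r))K(a_j,a_j)\,|K(z,a_j)|^2K(a_j,a_j)^{-1}K(z,z)^{-1}$. Then use $(\sum)^p\le\sum(\cdot)^p$ when $p<1$, which requires the integral estimate
\[\int_\Omega\Bigl(\frac{|K(z,a)|^2}{K(z,z)K(a,a)}\Bigr)^p d\lambda(z)<\infty\]
uniformly in $a$. This holds exactly when $2p>p+\frac{n}{n+1}\cdot$(appropriate normalisation), that is, when $p>\frac{n}{n+1}$, by the known Forelli–Rudin type estimates on convex finite type domains. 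For $p\ge1$ the converse follows from boundedness of the Berezin transform on $L^p(d\lambda)$.
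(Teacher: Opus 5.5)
Your Steps 1, 2 and 4 follow the paper's route. Step 3 (necessity for $0<p<1$), however, has a genuine gap, caused by testing with the first-order kernels $Be_j=k_{a_j}$. To make $\sum_{i\ne j}|x_{ij}|^p$ finite, let alone $O(e^{-cR})$, you need $\ell^p$-summability over $J$ of $i\mapsto |K(a_l,a_i)|/\sqrt{K(a_l,a_l)K(a_i,a_i)}$, uniformly in $l$. Lemma \ref{lem:atom-sum} with $N=1$ provides this only for $p>\frac{2n}{n+1}\ge 1$. On $\B^n$, which is in the class, this threshold is sharp. There the quantity equals $(1-|\varphi_{a_l}(a_i)|^2)^{(n+1)/2}$, with $\varphi_a$ the involutive automorphism exchanging $a$ and $0$. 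Over any separated, relatively dense family, its $p$-th powers sum to $\gtrsim\int_{\B^n}(1-|w|^2)^{p(n+1)/2}\,d\lambda(w)=\infty$ when $p\le\frac{2n}{n+1}$. Concretely, take $\mu=K(a_l,a_l)^{-1}\delta_{a_l}$ with $a_l\in J$. Then $B^*T_\nu B$ is rank one, hence in every $S_p$. Yet its entries $x_{il}$, $i\ne l$, have modulus $(1-|\varphi_{a_l}(a_i)|^2)^{(n+1)/2}$ and an infinite $\ell^p$-sum. Separation only removes near-diagonal entries. The divergence comes from the exponentially many distant lattice points, so no factor $e^{-cR}$ can absorb it.

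The missing idea is to test against high-order atoms $h_j=K_j^{1/2-N}K(\cdot,a_j)^N$, where $K_j=K(a_j,a_j)$ and $p(N-\frac12)>\frac n{n+1}$. With these atoms:
the map $e_j\mapsto h_j$ is still bounded on $A^2(\Omega)$ by Lemma \ref{atomic} (with exponent $2$);
the diagonal entries are still $\gtrsim\mu(E(a_j,r))K_j$ by \eqref{eq-lowerkernelestimates};
Lemma \ref{lem:atom-sum} now gives $\sup_i\sum_j y_{ij}^p<\infty$ for $y_{ij}=K_i^{-1/2}\sup_{E(a_i,r)}|h_j|$, hence $\sum_{j,k}\bigl|\int_{E(a_i,r)}h_j\overline{h_k}\,d\mu\bigr|^p\lesssim(\mu(E(a_i,r))K_i)^p$.

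Even then, your separation scheme would still need the tails $\sum_{j:\,\beta(a_i,a_j)\ge R/2}y_{ij}^p$ to tend to $0$ uniformly in $i$. That is a decay estimate for normalized kernels in the Kobayashi distance, which none of the recalled kernel estimates provides. The paper instead colours the cells randomly into $M$ classes. This uses only the uniform bound, and the small factor is $\mathbb{P}(\xi_i=\xi_j=\xi_k)\le\frac1M$ for $j\ne k$.

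Two smaller repairs are needed. In Step 2, Cauchy estimates on $E(a_j,2r)$ do not by themselves give a uniform geometric rate, since that ball is neither a polydisc nor a fixed dilate of $E(a_j,r)$. What the Taylor argument needs is Lemma \ref{det-comparison}: a polydisc $P_a$ in extremal coordinates with $E(a,r)\subset\vartheta P_a\subset P_a\subset\Omega$, $\vartheta<1$ uniform and $|P_a|\asymp K(a,a)^{-1}$. This is where smallness of $r$ is genuinely required. In Step 4, bounding $\sup_{w\in E(a_j,r)}|K(z,w)|^2$ by a multiple of $|K(z,a_j)|^2$ is unjustified (cf.\ Example \ref{example1}). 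Keep the supremum and remove it with the submean inequality in $w$ and Fubini, as in Lemma \ref{lem:cell-berezin}. This leads to the same Forelli--Rudin integral with $s=2p$, $t=1-p$.
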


Although similarly results have been established on the unit ball in \(\C^n\), the methods and techniques used on the unit ball cannot be directly applied to smoothly bounded  convex domains of finite type due to the geometric complexity of general domains, the main difficulties in the proof of Theorem \ref{thm:main} are as follows.  The one thing is that the failure of Jensen's inequality about 
\[
 \left(\int_\Omega |e_k(z)|^2 \mathcal{A}_r^1\mu(z)\,dV(z)\right)^p
 \leq
 \int_\Omega |e_k(z)|^2\bigl(\mathcal{A}_r^1\mu(z)\bigr)^p\,dV(z).
\]
for $0<p<1$. Here $\{e_k\}$ is the basis of $A^2(\Omega,dV)$ and $\mathcal{A}_r^1\mu(z)$ is the same as in \eqref{average-area}. The second thing is that \(\Omega\) may not be a Lu Qi-Keng domain, the estimate \(|K(z,u)|^b\asymp |K(z,v)|^b\) similarly in \cite[Lemma 4.30]{Zhu2007} may not hold  whenever \(\beta(u,v)\leq R\) (see  example \ref{example1} for details and the definition of \(\beta(u,v)\) see Section \ref{sec-Notations}). The third thing is that it is difficult for us to show that for some specific positive operators are strictly dominant when proving (i) \(\implies\) (iii) in Theorem 1.1 by taking the same approach in the proof of \cite[Theorem 7.16]{Zhu2007}.

\begin{example}\label{example1}
   Engli\v{s} \cite{Englis2000} exihbit a class of bounded strongly Hartogs domains with real analytic boundary, the Begman kernel has a zero.  For such a domain, \(K_\Omega(z_0,u_0)=0\), the function \(F(w)=\overline{K_\Omega(z_0,w)}\)is holomorphic with respect to \(w\), and is not identically zero, because \(F(z_0) = K_\Omega(z_0, z_0) > 0\). Choose a complex line passing through \(u_0\) such that
\[
F(u_0 + t\xi) = ct^m + O(t^{m+1}), \qquad c \neq 0.
\]
Let
\[
u_t = u_0 + t^2\xi, \qquad v_t = u_0 + t\xi.
\]
Then
\[
\beta_\Omega(u_t, v_t) \longrightarrow 0,
\]
but
\[
\frac{|K_\Omega(z_0, u_t)|^b}{|K_\Omega(z_0, v_t)|^b} \asymp |t|^{mb}\asymp\begin{cases}
      0,\,\,b>0\\
      \infty,\,\, b<0.
\end{cases}.
\]
 
\end{example}
To overcome the above difficulties, we use the geometric properties of smoothly bounded  convex domains of finite type to give estimates for the singular values of the Toeplitz operator \(T_\mu\), and employ a probabilistic selection method to obtain strict diagonal dominance.

\smallskip
\medskip
We now give the geometric and analytic  characterization of the weighted composition operator for \(0<p<2\). 
Let \(u\in\mathcal{H}(\Omega)\) and $\phi:\Omega\to\Omega$ be a holomorphic mapping. 
Define
\begin{equation*}
 W_{u,\phi}f=uC_\phi(f)=u\,(f\circ\phi).
\end{equation*}
If \(u\equiv 1\), then \(W_{u,\phi}\) is denoted by \(C_\phi\), which is usually known as the composition operator. For smoothly bounded convex domains of finite type, Xiao-Yang-Yuan \cite{Xiao2026} obtained the following result, which concerns the Schatten \(p\)-class of weighed composition operators when \(p\geq 2\)

\begin{thm}\cite[Corollary 4.2]{Xiao2026}\label{XiaoComposition}
   Let \(p\geq 2\) and $\phi:\Omega\to\Omega$ be a holomorphic mapping. Then \(W_{u,\phi}\in S_p(A^2(\Omega))\) if and only if 
   \begin{equation*}
 \int_{\Omega} 
\left( 
\int_{\Omega} 
\left( 
\frac{|u(w)| \, |K(z, \phi(w))|}{\sqrt{K(z, z)}} 
\right)^2 \, dv(w) 
\right)^{\frac{p}{2}} \, d\lambda(z) < \infty.
\end{equation*}
\end{thm}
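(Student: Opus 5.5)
The plan is to reduce the statement to Theorem \ref{XiaoToeplitz} through the standard identity $W_{u,\phi}^*W_{u,\phi}=T_\mu$ for a suitable pull-back measure $\mu$.

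\emph{Step 1: the measure and the identity.} Define the non-negative Borel measure $\mu$ on $\Omega$ by
\[
\mu(E)=\int_{\phi^{-1}(E)}|u(w)|^2\,dV(w),
\]
that is, $\mu=\phi_*(|u|^2dV)$. For $f\in A^2(\Omega)$ the change of variables gives
\[
\|W_{u,\phi}f\|_{A^2}^2=\int_\Omega |u(w)|^2|f(\phi(w))|^2\,dV(w)=\int_\Omega|f|^2\,d\mu .
\]
So $W_{u,\phi}$ is bounded exactly when $\mu$ is a Carleson measure for $A^2(\Omega)$. In that case, polarizing and using the reproducing property gives
\[
\langle T_\mu f,g\rangle=\int_\Omega f\bar g\,d\mu=\langle W_{u,\phi}f,W_{u,\phi}g\rangle ,
\]
hence $W_{u,\phi}^*W_{u,\phi}=T_\mu$. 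Since the singular values of $W_{u,\phi}$ are the square roots of the eigenvalues of $|W_{u,\phi}|^2=T_\mu$, we get $W_{u,\phi}\in S_p$ if and only if $T_\mu\in S_{p/2}$, with $\|W_{u,\phi}\|_{S_p}^2=\|T_\mu\|_{S_{p/2}}$.

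\emph{Step 2: apply Theorem \ref{XiaoToeplitz}.} Because $p\ge2$, we have $p/2\ge1$, so Theorem \ref{XiaoToeplitz} applies with exponent $p/2$. The inner integral can be rewritten using the definition of $\mu$:
\[
\int_\Omega |K(z,\zeta)|^2\,d\mu(\zeta)=\int_\Omega |u(w)|^2|K(z,\phi(w))|^2\,dV(w).
\]

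\emph{Step 3: convert the normalisations.} Here I will use the geometric facts recorded in Section \ref{Preliminaries}:
\[
V(E(z,r))\asymp K(z,z)^{-1}\quad\text{for fixed small } r, \qquad d\lambda(z)=K(z,z)\,dV(z).
\]
These give
\[
\frac{1}{K(z,z)^2\,V(E(z,r))}\asymp\frac1{K(z,z)}
\qquad\text{and}\qquad
\frac{dV(z)}{V(E(z,r))}\asymp d\lambda(z).
\]
Substituting into the condition of Theorem \ref{XiaoToeplitz} turns it into
\[
\int_\Omega\Bigl(\int_\Omega \frac{|u(w)|^2|K(z,\phi(w))|^2}{K(z,z)}\,dV(w)\Bigr)^{p/2}d\lambda(z)<\infty,
\]
which is exactly the displayed condition in the statement.

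\emph{Main obstacle: boundedness in the converse direction.} For the sufficiency direction we do not know a priori that $W_{u,\phi}$ is bounded, which Step 1 needs. I would handle this through Theorem \ref{XiaoToeplitz} itself. Finiteness of the integral implies $T_\mu\in S_{p/2}$, and in particular $T_\mu$ is bounded on $A^2(\Omega)$.

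It remains to deduce that $\mu$ is Carleson. First, $\langle T_\mu f,f\rangle=\int|f|^2d\mu$ holds on the dense set of finite combinations of kernels $K_z$. Then apply monotone convergence, or the standard equivalence between boundedness of $T_\mu$ and the Carleson property proved in \cite{Xiao2026}. This yields $\int|f|^2d\mu\lesssim\|f\|^2$, so $W_{u,\phi}$ is bounded and Step 1 applies.

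The only other point needing care is that the comparability constants in $V(E(z,r))\asymp K(z,z)^{-1}$ are uniform for the fixed $r$. This is where the convex finite type hypothesis is used, and it is already available from the preliminaries.
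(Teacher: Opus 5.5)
Your proposal is correct and is essentially the argument the paper relies on: the paper quotes this result from Xiao--Yang--Yuan without proof, but in Section~\ref{weighted composition operators} it uses the same pullback identity $W_{u,\phi}^*W_{u,\phi}=T_{\mu_{u,\phi}}$ to pass from $S_p$ to $S_{p/2}$, and in Section~\ref{sec:sharp-berezin} it converts the condition of Theorem~\ref{XiaoToeplitz} via $V(E(z,r))\asymp K(z,z)^{-1}$ and $dV/V(E(z,r))\asymp d\lambda$, exactly as in your Steps 1--3. Your extra step securing boundedness of $W_{u,\phi}$ in the sufficiency direction, by citing the Carleson characterization of bounded $T_\mu$, covers a point the paper passes over silently.
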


As an application of Theorem \ref{thm:main}, we give a geometric characterization of Schatten \(p\)-class weighted composition operators for \(0 < p < 2\) and an analytic characterization for \(\frac{2n}{n+1} < p < 2\).
 \begin{theorem}\label{mainth2}
    Let \(\Omega\) be a smoothly bounded convex domains of finite type,  \(\mu\) be a non-negative Borel measure on \(\Omega\) and \(p>0\). Then the following assertions are equivalent.
    \begin{enumerate}
        \item [(i)] \(W_{u,\phi}\in S_p(A^2(\Omega))\).
        \item [(ii)] For \(0<r<r_*\), the Kobayashi \(r\)-lattice satisfies  $$\sum_j
 \left[
   K(a_j,a_j)\int_{\phi^{-1}(E(a_j,r))}|u(\zeta)|^2\dd V(\zeta)
 \right]^{q/2}<\infty.$$
    \end{enumerate}
 If \(p>\frac{2n}{n+1}\), then the above conditions are equivalent to the following.
 \begin{enumerate}
     \item [(iii)] \[
         \int_\Omega
 \left[
  \frac1{K(z,z)}
  \int_\Omega|K(z,\phi(\zeta))|^2
                 |u(\zeta)|^2\dd V(\zeta)
 \right]^{q/2}\dd\lambda(z)<\infty.
     \]
 \end{enumerate}
  
 \end{theorem}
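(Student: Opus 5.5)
The plan is to reduce Theorem \ref{mainth2} to Theorem \ref{thm:main} through the standard identity $W_{u,\phi}^*W_{u,\phi}=T_{\mu_{u,\phi}}$. Here $\mu_{u,\phi}$ is the pull-back measure
\[
\mu_{u,\phi}(E)=\int_{\phi^{-1}(E)}|u(\zeta)|^2\dd V(\zeta),
\]
and the exponent $q$ in (ii) and (iii) is read as $p$ throughout. Assume first that $W_{u,\phi}$ is bounded on $A^2(\Omega)$; this holds under each of (i), (ii), (iii), since each implies that $\mu_{u,\phi}$ is a Carleson measure. For $f,g\in A^2(\Omega)$, the change of variables formula gives
\[
\langle W_{u,\phi}f,W_{u,\phi}g\rangle=\int_\Omega f(\phi(\zeta))\overline{g(\phi(\zeta))}|u(\zeta)|^2\dd V(\zeta)=\int_\Omega f\bar g\,\dd\mu_{u,\phi}.
\]
By the reproducing property, the right-hand side equals $\langle T_{\mu_{u,\phi}}f,g\rangle$. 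This is justified first on a dense class, for instance finite combinations of kernels $K_z$, and then extended by boundedness. Hence $W_{u,\phi}^*W_{u,\phi}=T_{\mu}$ with $\mu=\mu_{u,\phi}$. Since $W\in S_p$ if and only if $|W|=(W^*W)^{1/2}\in S_p$, we get $W_{u,\phi}\in S_p$ if and only if $T_\mu\in S_{p/2}$, with $\|W_{u,\phi}\|_{S_p}^p=\|T_\mu\|_{S_{p/2}}^{p/2}$.

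Next, Theorem \ref{thm:main} is applied with exponent $p/2>0$. For (i)$\iff$(ii), condition (iii) of Theorem \ref{thm:main} reads $\sum_j[\mu(E(a_j,r))K(a_j,a_j)]^{p/2}<\infty$. The substitution $\mu(E(a_j,r))=\int_{\phi^{-1}(E(a_j,r))}|u|^2\dd V$ turns this into exactly (ii), and \eqref{eq:main-quantitative} gives the quantitative equivalence. For (iii), I use condition (iv) of Theorem \ref{thm:main}, which is available when $p/2>\frac{n}{n+1}$, that is, $p>\frac{2n}{n+1}$. The Berezin transform is
\[
\widetilde\mu(z)=\int_\Omega\frac{|K(z,w)|^2}{K(z,z)}\dd\mu(w)=\frac1{K(z,z)}\int_\Omega|K(z,\phi(\zeta))|^2|u(\zeta)|^2\dd V(\zeta),
\]
again by change of variables, so $\widetilde\mu\in L^{p/2}(\dd\lambda)$ is precisely (iii).

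The main obstacle is the boundedness assumption needed to make the operator identity rigorous when we start from (ii) or (iii) rather than from (i). I would handle it by working at the level of quadratic forms. The finiteness of the lattice sum in (ii) forces $\sup_j\mu(E(a_j,r))K(a_j,a_j)<\infty$, and by the Carleson measure characterization this makes $\mu$ a Carleson measure for $A^2(\Omega)$. Consequently $\int|f|^2\dd\mu\lesssim\|f\|^2$, so $W_{u,\phi}$ is bounded and the identity above holds. In case (iii), a similar argument shows that $\widetilde\mu$ is bounded, using the subharmonic estimate $\widehat\mu_r\lesssim\widetilde\mu$ on Kobayashi balls. If the required Carleson measure theorem is not already explicitly among the preliminaries, I would derive it from the equivalence (i)$\iff$(ii) of Theorem \ref{thm:main}, applied to the truncations $\mu|_{\{\beta(\cdot,z_0)<R\}}$ with $R\to\infty$. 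These truncations satisfy the lattice bound uniformly, so the $T_{\mu_R}$ are uniformly bounded in $S_{p/2}$, and monotone convergence then yields the statement for $\mu$.
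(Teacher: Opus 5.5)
Your proposal is correct and follows the paper's own route. The identity $W_{u,\phi}^*W_{u,\phi}=T_{\mu_{u,\phi}}$ for the pull-back measure reduces $W_{u,\phi}\in S_p$ to $T_{\mu_{u,\phi}}\in S_{p/2}$, and (ii), (iii) are then exactly conditions (iii) and (iv) of Theorem \ref{thm:main} at exponent $p/2$, with the statement's $q$ read as $p$ and the latter requiring $p/2>\frac{n}{n+1}$. Your extra check that $W_{u,\phi}$ is bounded when starting from (ii) or (iii) is sound: it uses uniform $S_{p/2}$ bounds for truncations of $\mu_{u,\phi}$ and monotone convergence. It supplies a detail the paper passes over by simply citing \cite[Theorem 1.26]{Zhu2007}.
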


 Theorem \ref{mainth2} gives an answer to the question that find a full (analytic or geometric) characterization of the Schatten \(p\)-class weighted composition operators raised by Xiao-Yang-Yuan \cite{Xiao2026}.
Moreover, we also provide an example of a weighted composition operator showing that for \(0 < p \leq \frac{2n}{n+1}\), \(W_{u,\phi} \in S_p(A^2(\Omega))\), but
\[
\int_{\Omega} B_{u,\phi}(z)^{q/2} \, d\lambda(z) = \infty,
\]
see Section \ref{weighted composition operators} for details.

The paper is organized as follows. Some notations and preliminaries are given in Section \ref{Preliminaries}, including the geometry of smoothly bounded convex domains of finite type and some estimates of Bergman kernel. The equivalence of (i), (ii) and (iii) in 
 Theorem \ref{thm:main} is showed in Section \ref{geometric-characterization}, including the estimates of singular values and probabilistic selection method.  In Section \ref{Berezin}, we show that (iv) is equivalent to (i), (ii) and (iii) in Theorem \ref{thm:main} whenever \(p>\frac{n}{n+1}\). Furthermore, we  provide an example to show that \(p>\frac{n}{n+1}\) is sharp. We  show that an appropriate choice of order yields a Berezin-type criterion for every \(p>0\).  Finally, the characterizations of Schatten \(p\)-class  weighted composition operators are showed in Section \ref{weighted composition operators}.

\section{Preliminaries}\label{Preliminaries}
\subsection{Notations}\label{sec-Notations}
\begin{itemize}
    \item \(\D^n=\{z\in \C^n: |z_i|<1\}\), \(\mathbb{B}^n=\{z\in\C^n:|z|<1\}\).
    \item For a set \(E\), \(|E|\) denote the Lebesgue measure of \(E\), \(\# E\) denote the cardinality of \(E\).
    \item \(k_z(w)=\frac{K(w,z)}{\sqrt{K(z,z)}}\).
    \item \(d\lambda(z)=K(z,z)dV(z)\).
    \item  \(B_\mu(z)=\widetilde{\mu}(z)=\langle T_\mu k_z,k_z\rangle\); 
    \item \(\widehat{\mu}_r(z)=\frac{\mu(E(z,r))}{|E(z,r)|}\)
    \item \(\chi_{E}(t)=\begin{cases}
     1,\quad t\in E,\\ 
  0,\quad t\notin E.
    \end{cases}\)
    \item $X\lesssim Y$ If $X\leq CY$; \(X\gtrsim Y\) if \(X\geq CY\);  $X\asymp Y$ if \(X\lesssim CY\) and \(X\gtrsim CY\).
\end{itemize}

\subsection{\texorpdfstring{Schatten \(p\)-class}{Schatten p-class}}\label{sec-Schatten}
Let $T$ be a {compact operator}  Hilbert spaces \(H\), then $T^*T$ is a positive compact operator on $H$, so its eigenvalues are non-negative and can be ordered as
\(
\lambda_1 \ge \lambda_2 \ge \cdots \ge 0.
\)
The $n$-th {singular value} of $T$ is defined by
\[
s_n(T) = \sqrt{\lambda_n(T^*T)}, \qquad n = 1, 2, \ldots
\]
equivalently, $s_n(T)$ are the eigenvalues of $|T| = (T^*T)^{1/2}$.
For \( 0 < p < \infty \),  the Schatten \( p \)-class of \( H \), denoted \( S_p(H) \) or simply \( S_p \), to be the space of all compact operators \( T \) on \( H \) with its singular value sequence \( \{ s_n(T) \} \) satisfies 
\[\|T\|_{S_p}^p=\sum_{n} |s_n(T)|^p<\infty.\]
\( S_1 \) is usually called the trace class, and \( S_2 \) is usually called the Hilbert--Schmidt class. For \(0<p\leq 1\), we use the following facts, see  \cite{Simon2005,Zhu2007} for details.
\begin{enumerate}
\item [(i)] $S_p$ is a complete quasi-Banach ideal.
\item [(ii)] For $0<p\leq1$, there is $C_p$ such that
      $\|S+T\|_{S_p}^p\leq C_p(\|S\|_{S_p}^p+\|T\|_{S_p}^p)$.
\item [(iii)]      For positive finite sums one may use Rotfel'd's inequality
      $\Tr(\sum_jS_j)^p\leq\sum_j\Tr S_j^p$.
\item [(iv)] If $0\leq S\leq T$ and $T\in S_p$, then
      $\|S\|_{S_p}^p\leq\|T\|_{S_p}^p$.
\item [(v)] If $J$ acts between Hilbert spaces, then the nonzero singular
      values of $J^*J$ are $s_k(J)^2$.
\end{enumerate}

\subsection{The geometry of finite type convex domains}\label{geometry of finite type}
\subsubsection{\textbf{ Polydisc}}
Let \(\Omega\) be a smoothly bounded convex domain. A point \(\zeta \in \partial\Omega\) is said to be of finite type if the order of contact of complex lines with \(\partial\Omega\) is finite. The domain \(\Omega\) is said to be of finite type if every point on \(\partial\Omega\) is of finite type. We denote \(\Omega\) as having finite type \(M_\Omega\) when every point on \(\partial{\Omega}\) posses a finite type that is less than or equal to \(M_\Omega\).
 The concept of finite type origin to J. D'Angelo \cite{D'Angelo1982}, the geometry of convex domains of finite type is by now well understood, see \cite{Krantz2001,McNeal1992,McNeal1994,McNealStein1994} and therein references. We repeat the construction as in \cite{Bonami2001} for the reader's convenience, see also \cite{McnealStein1997,Xiao2026}.

Since \(\Omega = \{z \in \mathbb{C}^n : \rho(z) < 0\}\) is a bounded domain with the smooth boundary \(\partial\Omega\), we have
\begin{equation*}
|\rho(z)| \approx \delta(z) = \operatorname{dist}(z, \partial\Omega) = \text{the Euclidean distance from } z \in \Omega \text{ to } \partial\Omega. \quad 
\end{equation*}
The above equivalence  implies that, in most of our discussions, using either \(|\rho(z)|\) or \(\delta(z)\) is interchangeable, and so we may choose whichever is more convenient.

There exist an \(\varepsilon_0 > 0\) and a defining function \(\varrho\) for \(\Omega\) such that for \(-\varepsilon_0 < \varepsilon < \varepsilon_0\) the sets \(\Omega_\varepsilon := \{z \in \mathbb{C}^n : \varrho(z) < \varepsilon\}\) are all convex. Moreover, denote by \(U = U_{\varepsilon_0}\) the tubular neighborhood of \(\partial\Omega\) given by \(\{z \in \mathbb{C}^n : -\varepsilon_0 < \varrho(z) < \varepsilon_0\}\). By taking \(\varepsilon_0 > 0\) sufficiently small, we may assume that on \(\overline{U}\) the normal projection \(\pi\) of \(U\) onto \(\partial\Omega\) is uniquely defined.

Let \(z \in U\) and let \(v\) be a unit vector in \(\mathbb{C}^n\). We denote by \(\tau(z, v, r)\) the distance from \(z\) to the surface \(\{z' : \varrho(z') = \varrho(z) + r\}\) along the complex line determined by \(v\). One of the basic relations among the quantities defined above is the following. There exists a constant \(C\) depending only on the geometry of the domain such that given \(z \in U\), any unit vector \(v \in \mathbb{C}^n\) and \(r \leq r_0\) and \(\eta < 1\) we have
\begin{equation*}
C^{-1}\eta^{1/2}\tau(z, v, r) \leq \tau(z, v, \eta r) \leq C\eta^{1/M_\Omega}\tau(z, v, r) \,. 
\end{equation*}

We next define the r-extremal orthonormal basis \(\{v^{(1)}, \dots, v^{(n)}\}\) at \(z\). The first vector is given by the direction transversal direction to the level sets of \(\varrho\), pointing outward. In the complex directions orthogonal to \(v^{(1)}\) we choose \(v^{(2)}\) in such a way that \(\tau(z, v^{(2)}, r)\) is maximum. We repeat the same procedure to determine the remaining elements of the basis. We set
\[
\tau_j(z, r) = \tau\left(z, v^{(j)}, r\right) \,.
\]
The polydisc \(Q(z, r)\) is now given as
\begin{equation}\label{def-Mcnealpolydisc}
    Q(z, r) = \{w : |w_k| \leq \tau_k(z, r), \, k = 1, \dots, n\} \,.
\end{equation}

Here \((w_1, \dots, w_n)\) are the coordinates determined by \textit{r}-extremal orthonormal basis \(\{v^{(1)}, \dots, v^{(n)}\}\) at \(z\). Notice that these coordinates \((w_1, \dots, w_n) = (w_1^{z,r}, \dots, w_n^{z,r})\) depend on \(z\) and on \(r\). Now we need the following results.

\begin{lemma}\cite[Lemma 2.1]{Xiao2026}\label{polydiscembed}
There exists a constant \( C > 0 \) depending only on \( \Omega \) such that, for any unit vector \( v \in \mathbb{C}^n \),  
\( 0 < r \leq \varepsilon_0 \), let \(U\) be the tubular neighborhood of \(\partial{\Omega}\), \( z \in U \), and \( 0 < \eta < 1 \), we have the following properties:
\begin{enumerate}
\item [(i)]  \[
\eta^{\frac{1}{2}} Q(z, C^{-1}r) \subset Q(z, \eta r) \subset \eta^{\frac{1}{M_\Omega}} Q(z, Cr);
\]
\item [(ii)]  If \( \delta > 0 \) is small enough and \( w \in Q(z, \delta) \), then
\[
\tau(z, v, r) \approx \tau(w, v, r);
\]
\item [(iii)]  \[
Q(z, r) \cap Q(w, r) \neq \varnothing \implies Q(z, r) \subset Q(w, Cr).
\]
\end{enumerate}
\end{lemma}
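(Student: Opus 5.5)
The statement is Lemma~\ref{polydiscembed}, a purely geometric fact about McNeal's polydiscs. The plan is to derive all three parts from the scaling inequality
\[
C^{-1}\eta^{1/2}\tau(z,v,r)\le \tau(z,v,\eta r)\le C\eta^{1/M_\Omega}\tau(z,v,r),
\]
together with the convexity of the level sets \(\Omega_\varepsilon\) and the construction of the \(r\)-extremal bases.

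For (i), first compare \(\tau_k(z,\eta r)\) with \(\tau_k(z,r)\) coordinate by coordinate. Since the extremal basis depends on \(r\), I will use the standard fact (McNeal) that for any two extremal bases at \(z\) the associated polydiscs are comparable up to a constant dilation. This holds because \(\prod_k\tau_k(z,r)\) is comparable to the maximal volume of complex "ellipsoids" inside the convex set \(\{\varrho<\varrho(z)+r\}\) centred at \(z\), and each \(\tau(z,v,r)\) for arbitrary \(v\) is comparable to the size of \(Q(z,r)\) in direction \(v\). With this in hand, the scaling inequality applied direction by direction gives
\[
\eta^{1/2}Q(z,C^{-1}r)\subset Q(z,\eta r)\subset \eta^{1/M_\Omega}Q(z,Cr).
\]
The constant \(C\) inside the polydisc is what absorbs the basis change.

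For (ii), take \(w\in Q(z,\delta)\) with \(\delta\) small. Then \(|\varrho(w)-\varrho(z)|\lesssim\delta\), and by convexity the level surface at height \(\varrho(w)+r\) lies between the surfaces at heights \(\varrho(z)+r/2\) and \(\varrho(z)+2r\) as seen from \(w\). Convexity also bounds the distance along the line \(w+\mathbb{C}v\) by the distance along \(z+\mathbb{C}v\), up to the displacement \(|w-z|\) in direction \(v\), which is at most a multiple of \(\tau(z,v,\delta)\). Applying the scaling inequality with \(\eta=2\) or \(1/2\) then gives \(\tau(w,v,r)\approx\tau(z,v,r)\).

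For (iii), suppose \(\xi\in Q(z,r)\cap Q(w,r)\). Part (ii), rescaled as the statement at level \(r\) (a quasi-triangle property), gives \(Q(\xi,r)\) comparable to both \(Q(z,r)\) and \(Q(w,r)\). Hence for any \(\zeta\in Q(z,r)\), the displacement \(\zeta-w\) in each direction \(v\) is controlled by \(\tau(w,v,r)\) up to a constant. Using (i) to convert a constant dilation into a change \(r\to Cr\) yields \(Q(z,r)\subset Q(w,Cr)\).

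The main obstacle is the direction-independence in (i) and (iii), namely controlling \(\tau(z,v,r)\) for non-extremal \(v\) by the extremal \(\tau_k\). Here I would invoke McNeal's lemma that \(\tau(z,v,r)\approx\bigl(\sum_k|v_k|^2/\tau_k(z,r)^2\bigr)^{-1/2}\). In the paper this lemma is quoted from \cite{Xiao2026}, which relies on McNeal.
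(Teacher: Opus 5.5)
The paper gives no proof of this lemma. It is quoted from \cite[Lemma 2.1]{Xiao2026}, which in turn rests on \cite{McNeal1994} and \cite{Bonami2001}, so your sketch has to stand on its own. Your overall strategy is the standard one: describe $Q(z,r)$ independently of the basis via McNeal's formula $\tau(z,v,r)^{-1}\approx\sum_k|\langle v,v^{(k)}\rangle|/\tau_k(z,r)$, then apply scaling. It does give the right-hand inclusion of (i) from the $\eta^{1/M_\Omega}$ upper bound, and it gives (iii) once (ii) is available. Two steps fail, however.

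\textbf{The left inclusion in (i).} You derive it by applying the quoted bound $C^{-1}\eta^{1/2}\tau(z,v,r)\le\tau(z,v,\eta r)$ direction by direction. That bound is false in the transversal direction. Since $v^{(1)}$ is transversal to the level sets, $\tau_1(z,r)=\tau(z,v^{(1)},r)\approx r$, and hence $\tau_1(z,\eta r)\approx\eta\,\tau_1(z,r)$. The vector $v^{(1)}$ is the first axis of every $Q(z,\cdot)$, and the other axes are orthogonal to it. So take the point $z+t\,v^{(1)}$ with $t$ just below $\eta^{1/2}\tau_1(z,C^{-1}r)\approx\eta^{1/2}r/C$. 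It lies in $\eta^{1/2}Q(z,C^{-1}r)$ but not in $Q(z,\eta r)$ once $\eta$ is small compared with $C^{-2}$. This inclusion therefore cannot be proved uniformly for $\eta\in(0,1)$. The $\eta^{1/2}$ law holds only for directions complex-tangent to the level set at $z$. In general, convexity gives only $\tau(z,v,\eta r)\ge\eta\,\tau(z,v,r)$, that is, $\eta\,Q(z,C^{-1}r)\subset Q(z,\eta r)$. The paper uses only the right-hand inclusion (in Lemma~\ref{det-comparison}), so nothing downstream is affected. Still, you should have checked the quoted scaling law rather than relying on it.

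\textbf{The core step of (ii).} As written, this step is not an argument. The lines $w+\C v$ and $z+\C v$ are parallel, so what matters is the part of $w-z$ transverse to $v$, not the displacement ``in direction $v$''. The bound you assert is also false. Take $v=(v^{(1)}+v^{(2)})/\sqrt2$ and $w=z+\tfrac12\tau_2(z,\delta)v^{(2)}\in Q(z,\delta)$. McNeal's formula gives $\tau(z,v,\delta)\approx\tau_1(z,\delta)\approx\delta$, while $|\langle w-z,v\rangle|\approx\tau_2(z,\delta)\gtrsim\delta^{1/2}$.

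What is missing is the convexity mechanism that actually compares the two lines. Read (ii) with $\delta\le c\,r$; this is the only reading under which it holds, and it is the one you use implicitly. Then $Q(z,\delta)$ is balanced about $z$, satisfies $|\varrho-\varrho(z)|\lesssim\delta$ on it, and obeys $2Q(z,\delta)\subset K:=\{\varrho<\varrho(z)+r/2\}$.
\begin{itemize}
\item Lower bound: writing $w=\tfrac12 z+\tfrac12(2w-z)$ gives $K-w\supset\tfrac12(K-z)$. Hence $\tau(w,v,r)\ge\tfrac12\tau(z,v,r/2)\gtrsim\tau(z,v,r)$.
\item Upper bound: let $z+\lambda_0v\in\{\varrho=\varrho(z)+2r\}$ with $|\lambda_0|=\tau(z,v,2r)$, and set $K':=\{\varrho<\varrho(z)+2r\}$. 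Then $w+2\lambda_0v\notin K'$, because $z+\lambda_0v=\tfrac12(w+2\lambda_0v)+\tfrac12(2z-w)$ and $2z-w\in Q(z,\delta)\subset K'$. Therefore $\tau(w,v,r)\le2\tau(z,v,2r)\lesssim\tau(z,v,r)$.
\end{itemize}

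With (ii) in this form, your plan for (iii) goes through. Write $\zeta-w=(\zeta-z)+(z-\xi)+(\xi-w)$, use the symmetry of the polydiscs, and use the $\eta^{1/M_\Omega}$ bound to absorb the resulting constant dilation.
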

Let
\[
\begin{cases}
d_b(z, w) = \inf \{r : w \in Q(z, r)\} \,, \\[1ex]
d(z, w) = d_b(z, w) + \delta(z) + \delta(w) \,, \\[1ex]
\tau(z, r) = \displaystyle\prod_{j=2}^{n} \tau_j(z, r) \,,
\end{cases}
\]
then Lemma \ref{polydiscembed} implies \(\tau(z,d(z,w))\asymp \tau(w,d(z,w))\) and \cite[P.367]{Bonami2001} yields
\begin{equation}\label{polydiscvolume}
    |Q(w,r)|=\int_{Q(w,r)} dV(z)\asymp r^2\tau(w,r)^2,\,\,\forall w\in \Omega.
\end{equation}

The standard construction also gives pairwise
disjoint inner extremal polydiscs
\begin{equation*}
 Q_j^-=Q(a_j,c_r\delta(a_j)).
\end{equation*}
For given \(\eta > 0\), we say that a sequence of points \(\{w_j\} \subset \Omega\) forms an \(\eta\)-lattice if it satisfies the following conditions:
\begin{enumerate}
\item [(i)]
\[
\bigcup_j Q(w_j, \eta \delta(w_j)) = \Omega;
\]
\item [(ii)] the sets \(Q(w_j, \eta \delta(w_j))\) are almost disjoint, meaning that any given point in \(\Omega\) belongs to at most \(N_\Omega\) of these polydiscs, where the integer \(N_\Omega\) depends only on \(\Omega\);
\item [(iii)] there exists \(C_\Omega > 0\) such that
\[
Q\left(w_j, \frac{\eta \delta(w_j)}{C_\Omega}\right) \cap Q\left(w_k, \frac{\eta \delta(w_k)}{C_\Omega}\right) = \emptyset, \quad \forall j \neq k.
\]
\end{enumerate}

\subsubsection{\textbf{Kobayashi metric}}

\indent For our convenience, we also need the Kobayashi metric of \(\Omega\). For \(\eta \in \Omega\) and \(\xi \in \mathbb{C}^n\), the infinitesimal Kobayashi metric of \(\Omega\) is defined by
\[
F_K(\eta, \xi) = \inf\{C > 0 : \exists f \in H(\mathbb{D}, \Omega) \text{ such that } f(0) = \eta \text{ and } f'(0) = C^{-1}\xi\},
\]
where \(\mathbb{D}\) is the unit disk in the complex plane \(\mathbb{C}\), and \(H(\mathbb{D}, \Omega)\) denotes the space of all holomorphic mappings from \(\mathbb{D}\) to \(\Omega\). Let \(\gamma : [0, 1] \to \Omega\) be a \(C^1\)-curve. The Kobayashi length of \(\gamma\) is defined as
\[
L_K(\gamma) = \int_0^1 F_K(\gamma(t), \gamma'(t))dt.
\]
For \(z, w \in \Omega\), the Kobayashi metric function \(\beta(z, w)\) is defined by
\[
\beta(z, w) = \inf\{L_K(\gamma) : \gamma \text{ is a } C^1\text{-curve with } \gamma(0) = z \text{ and } \gamma(1) = w\}.
\]
For \(z_0 \in \Omega\) and \(0 < r < 1\), we let \(E(z_0, r)\) be the Kobayashi ball of center \(z_0\) and radius \(\frac{1}{2} \log \frac{1+r}{1-r}\).

The relationship between the Kobayashi ball and the  polydisc is given as follows:
\begin{lemma}\cite[Lemma 2.3]{Xiao2026}\label{polydiscchain}
For any \( r \in (0,1) \) and \( z \in \Omega \), there is the following inclusion chain:
\[
Q\left(z, \frac{r}{n} \delta(z)\right) \subset E(z,r) \subset Q\left(z, \frac{2r}{1-r} \delta(z)\right).
\]
\end{lemma}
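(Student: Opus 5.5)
The statement to prove is Lemma~\ref{polydiscchain}:
\[
Q\left(z,\tfrac{r}{n}\delta(z)\right)\subset E(z,r)\subset Q\left(z,\tfrac{2r}{1-r}\delta(z)\right).
\]
I would prove the two inclusions separately. Both use the convexity of $\Omega$ together with the adapted coordinates $(w_1,\dots,w_n)$ given by the extremal basis at $z$. Throughout, recall that $E(z,r)$ is the Kobayashi ball of radius $\frac12\log\frac{1+r}{1-r}$, i.e.\ the points at Kobayashi distance $\tanh^{-1}(r)$ from $z$.

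\textbf{Inner inclusion.} The plan is to build explicit analytic discs through $z$ and use only the decreasing property of the Kobayashi distance.
\begin{enumerate}
\item Take $w\in Q(z,\frac rn\delta(z))$ and write $w-z=\sum_k w_k v^{(k)}$ with $|w_k|\le \tau_k(z,\frac rn\delta(z))$.
\item By definition of $\tau_k$, the disc $z+\zeta\tau_k v^{(k)}$, $|\zeta|<1$, stays in the sublevel set $\{\varrho<\varrho(z)+\frac rn\delta(z)\}$. Since $\varrho(z)\approx-\delta(z)$, this set lies inside $\Omega$, possibly after normalising constants so that $\tau_k(z,\delta(z))$ measures the distance to $\partial\Omega$ in direction $v^{(k)}$.
\item Using convexity, I would rather consider the points $z+n\,w_k v^{(k)}$ with $|n w_k|\le \tau_k(z,r\delta(z))$, each reached by a disc inside $\Omega$. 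Then write
\[
w=\frac1n\sum_k\bigl(z+n\,w_k v^{(k)}\bigr).
\]
\item Convexity of $\Omega$ means the map $\zeta\mapsto z+\zeta\sum_k(\text{direction}_k)/n$ built from these $n$ discs sends $\mathbb D$ into $\Omega$, because an average of points of $\Omega$ lies in $\Omega$.
\item This yields a holomorphic disc $f$ with $f(0)=z$ and $f(t)=w$, where $|t|\le r$. By the decreasing property, $\beta(z,w)\le \beta_{\mathbb D}(0,r)=\frac12\log\frac{1+r}{1-r}$.
\end{enumerate}
The delicate point is the relation between the parameter in $Q$ and the scale: the linear scaling of $\tau$ only holds up to the factors in Lemma~\ref{polydiscembed}. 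I would therefore take the convention $\tau_k(z,s\delta(z))\le s\,\tau_k(z,\delta(z))$ for $s\le1$, which follows from convexity of the sublevel sets, and I expect this to fix the constant $1/n$ exactly.

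\textbf{Outer inclusion.} Here I would use the standard lower bound for the Kobayashi metric on convex domains via supporting half-planes.
\begin{enumerate}
\item For each direction $v^{(k)}$, take the supporting real hyperplane at the boundary point reached from $z$ along $v^{(k)}$. Its complexified linear functional $\ell_k$ maps $\Omega$ into a half-plane $H_k$.
\item The distance from $\ell_k(z)$ to $\partial H_k$ is comparable to $\tau_k(z,\delta(z))$.
\item The Kobayashi distance decreases under $\ell_k$ and is explicit on half-planes, so $\beta(z,w)<\tanh^{-1}r$ forces
\[
|\ell_k(w)-\ell_k(z)|\le \frac{2r}{1-r}\operatorname{dist}\bigl(\ell_k(z),\partial H_k\bigr).
\]
This is the half-plane analogue of the pseudo-hyperbolic disc bound $|\zeta|<r\Rightarrow$ Euclidean displacement at most $\frac{2r}{1-r}$ times the height.
\item Converting these functional bounds into coordinate bounds uses that the extremal basis makes the $\ell_k$ nearly triangular. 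This is the McNeal--Bonami construction, and I would invoke it in the form implicit in \eqref{def-Mcnealpolydisc}.
\item The result is $|w_k|\lesssim \frac{2r}{1-r}\tau_k(z,\delta(z))$, so $w\in Q(z,\frac{2r}{1-r}\delta(z))$.
\end{enumerate}

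\textbf{Main obstacle.} I expect it to be this triangularity step, together with getting the constants exact rather than up to $C$. If that fails, I would accept the inclusions with constants depending on $\Omega$, which is all that is used later.
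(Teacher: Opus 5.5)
The paper does not prove this lemma; it quotes it from \cite{Xiao2026}. So your proposal has to stand on its own, and the inner inclusion has a genuine gap.

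The ``convention'' $\tau_k(z,s\delta(z))\le s\,\tau_k(z,\delta(z))$ for $s\le1$ is not a normalisation at your disposal, and convexity gives the \emph{opposite} inequality. If $\varrho$ is convex (e.g.\ the signed distance), then $\varrho(p)<\varrho(z)+s$ implies $\varrho(z+t(p-z))<\varrho(z)+ts$ for $0<t\le1$. Hence $\tau(z,v,ts)\ge t\,\tau(z,v,s)$, and in complex-tangential directions $\tau$ is far from linear (it behaves like $\sqrt s$ on the ball). Your outer half tacitly uses exactly this correct direction when it passes from $|w_k|\lesssim\frac{2r}{1-r}\tau_k(z,\delta(z))$ to $w\in Q(z,\frac{2r}{1-r}\delta(z))$. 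So the two halves rest on contradictory conventions. What steps 3--5 actually prove (with $\varrho$ the signed distance, so that the discs of radius $\tau_k(z,\delta(z))$ lie in $\Omega$) is
\[
\Bigl\{z+\textstyle\sum_k w_kv^{(k)}:|w_k|<\frac rn\tau_k(z,\delta(z))\Bigr\}\subset E(z,r).
\]
This is the dilate $\frac rn Q(z,\delta(z))$, which is much smaller than $Q(z,\frac rn\delta(z))$ in tangential directions.

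This step cannot be repaired: with $Q$ as in \eqref{def-Mcnealpolydisc}, the stated inner inclusion fails for small $r$ once $n\ge2$.
\begin{itemize}
\item Take $\B^n$, $\varrho(w)=|w|-1$ and $z=(1-\delta,0,\dots,0)$. Then $\tau_2(z,s)=\sqrt{2s(1-\delta)+s^2}$.
\item The identity $|\varphi_z(z+te_2)|^2=|t|^2/(1-|z|^2)$ gives $E(z,r)\cap(z+\C e_2)=\{|t|<r\sqrt{\delta(2-\delta)}\}$.
\item For $s=\frac rn\delta$, containment needs $\frac{2(1-\delta)}n+\frac{r\delta}{n^2}<r(2-\delta)$. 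This is false as $\delta\to0$ whenever $r<\frac1n$.
\item The same computation with $s=cr\delta$ fails for $r<c$.
\end{itemize}
So your fallback of ``constants depending on $\Omega$'' does not help either: an inner polydisc in the level-parameter form must shrink like a power of $r$. For example, Lemma~\ref{polydiscembed}(i) and your argument give
\[
Q\bigl(z,C^{-1}(r/n)^{M_\Omega}\delta(z)\bigr)\subset\tfrac rnQ(z,\delta(z))\subset E(z,r).
\]
An $r$-dependent radius of this kind is enough for the later use in Lemma~\ref{det-comparison}, where $r=\rho_1$ is fixed.

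The outer half is sound in outline; the half-plane bound is exactly where $\frac{2r}{1-r}$ comes from. But the triangularity step costs dimensional constants. The final conversion to $Q(z,\frac{2r}{1-r}\delta(z))$ is justified by convexity only when $\frac{2r}{1-r}\le1$.
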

For \(0<r<1\), Lemma \ref{polydiscchain} and \eqref{polydiscvolume} implies that 
\begin{equation}\label{Kobayashivolume}
    |E(z,r)|\asymp \delta(z)^2\tau(z,\delta(z))^2.
\end{equation}

\begin{lemma}\label{Kobayashilattice}
For any \( r \in (0,1) \), there exists not only a positive integer \( M_r \) but also a sequence of points \(\{z_k\} \subset \Omega\) such that not only
\[
\Omega = \bigcup_{k=0}^\infty E(z_k, r)
\]
but also no point in \(\Omega\) belongs to more than \(M_r\) of the balls \(\{E(z_k, \frac{1+r}{2})\}\).
\end{lemma}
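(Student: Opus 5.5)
The plan is to build the lattice by a maximal separated set argument in the Kobayashi distance, and get bounded overlap from volume comparison via Lemma~\ref{polydiscchain} and \eqref{Kobayashivolume}. Write $\rho_s=\frac12\log\frac{1+s}{1-s}$ for the Kobayashi radius of $E(\cdot,s)$, and fix $r\in(0,1)$.

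\textbf{Step 1 (covering).} By Zorn's lemma, choose a maximal family $\{z_k\}\subset\Omega$ with $\beta(z_j,z_k)\ge \rho_r$ for $j\ne k$. For any $z\in\Omega$, maximality gives some $k$ with $\beta(z,z_k)<\rho_r$, that is, $z\in E(z_k,r)$. Hence $\Omega=\bigcup_k E(z_k,r)$. The family is countable because the balls $E(z_k,s)$ with $\rho_s=\rho_r/2$ are pairwise disjoint (triangle inequality) and each has positive volume.

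\textbf{Step 2 (bounded overlap).} Let $s$ be as above and $R=(1+r)/2$. Suppose $z\in E(z_k,R)$ for $k\in I$. Then every $z_k$ with $k\in I$ lies in $E(z,R)$. The disjoint balls $E(z_k,s)$ are then contained in the Kobayashi ball about $z$ of radius $\rho_R+\rho_s$, which equals $E(z,R')$ for some $R'<1$ depending only on $r$. The volume estimate \eqref{Kobayashivolume} gives $|E(w,t)|\asymp\delta(w)^2\tau(w,\delta(w))^2$, with constants depending on $t$ and $\Omega$. So
\[
\#I\cdot \min_{k\in I}\delta(z_k)^2\tau(z_k,\delta(z_k))^2\lesssim \delta(z)^2\tau(z,\delta(z))^2 .
\]

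\textbf{Step 3 (comparing base points).} It remains to show $\delta(z_k)\asymp\delta(z)$ and $\tau(z_k,\delta(z_k))\asymp\tau(z,\delta(z))$ whenever $\beta(z,z_k)<\rho_R$. Once this holds, $\#I\le M_r$ follows. Near the boundary, Lemma~\ref{polydiscchain} gives $z_k\in Q(z,C_r\delta(z))$. Convexity, together with the fact that the first coordinate of $Q$ is the normal direction of size $\tau_1\asymp\delta(z)$, gives $\delta(z_k)\asymp\delta(z)$. Then Lemma~\ref{polydiscembed}(ii), combined with the scaling in (i) to pass between radii $\delta(z)$ and $\delta(z_k)$, gives $\tau_j(z_k,\delta(z_k))\asymp\tau_j(z,\delta(z))$. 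In the interior (away from $U$) everything is comparable to constants by compactness.

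The main obstacle is Step 3. Lemma~\ref{polydiscembed}(ii) is stated only for $w\in Q(z,\delta)$ with small $\delta$, whereas here $C_r$ may be large. I would handle this by chaining: join $z$ to $z_k$ by a Kobayashi geodesic of length $<\rho_R$, and split it into $N(r)$ pieces, each of small Kobayashi length. Applying the small-radius comparison on each piece costs a factor $C^{N(r)}$, which still depends only on $r$ and $\Omega$. Alternatively, one could invoke the known fact that the Bergman/Kobayashi volumes on such domains are doubling with respect to $\beta$, which gives the comparison directly.
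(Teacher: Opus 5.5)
Your argument is correct. It is the standard maximal-separated-set plus volume-counting proof the paper relies on: the paper states this lemma without proof, describes lattices as obtained by ``a maximal uniformly separated selection,'' and runs exactly your counting, with $\dd\lambda$ in place of $dV$, in the proof of Proposition~\ref{prop:necessity}.

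Your Step~3 is not actually an obstacle and needs no chaining. Since $z_k\in E(z,R)$, \eqref{kernel-volume} and Lemma~\ref{lm-diagonalBegmankernel} give $|E(z_k,s)|\asymp K(z_k,z_k)^{-1}\asymp K(z,z)^{-1}\asymp |E(z,R')|$, with constants depending only on $r$ and $\Omega$. This also sidesteps your coordinate-by-coordinate comparison of the $\tau_j$, which Lemma~\ref{polydiscembed}(ii) does not directly give: it compares $\tau$ along a fixed direction, while the extremal bases at $z$ and $z_k$ differ.
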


The sequence \(\{z_k\}\) given by Lemma \ref{Kobayashilattice} is called a Kobayashi \(r\)-lattice. An easy application of Lemma \ref{polydiscchain} implies that if \(\{z_k\}\) is a Kobayashi \(r\)-lattice, then there is an \(\eta > 0\) such that \(\{z_k\}\) is an \(\eta\)-lattice.

By a { Kobayashi $r$-lattice} we mean a sequence
$\{a_j\}$ obtained by a maximal uniformly separated selection.  In
particular, the balls $E(a_j,r)$ cover $\Omega$, a fixed enlargement has
uniformly finite overlap, and the separation constant depends only on
$r$ and $\Omega$.

\subsection{Bergman kernel and plurisubharmonic functions}
 In this subsection, we provide some estimates of Begman kernel on smoothly bounded convex domains of finite type which are needed in the characterizations of Schatten \(p\)-class.

Let\(\{e_j\}\) or \(\{\sigma_k\}\) be an orthonormal basis or family of \(A^2(\Omega)\) and \(K_w(z) = K(z, w)\), then
\[
\begin{cases}
|K(z, w)| = |K(w, z)| \,, \\[1ex]
K(z, w) = \displaystyle\sum_{j=0}^{\infty} e_j(z) \overline{e_j(w)} \,, \\[2ex]
\|K_w\|_{A^2(\Omega)}^2 = \langle K_w, K_w \rangle_{A^2(\Omega)} = K(w, w) \geqslant \displaystyle\sum_{k=0}^{\infty} |\sigma_k(w)|^2 \,,
\end{cases}
\]
where the last inequality becomes an equality if and only if \(\{\sigma_k\}\) is an orthonormal basis.

By \cite[P.367]{Bonami2001}, we have 
\[|K(z,w)|\lesssim\frac{1}{|Q(z,d(z,w))|}\asymp \frac{1}{\delta(z)^2\tau(z,\delta(z))^2},\,\,(z,w)\in\overline{\Omega}\times\overline{\Omega}\,\, \text{with}\,\,d(z,w)\neq 0\]
By \eqref{polydiscvolume} , \eqref{Kobayashivolume} and  \cite[P.367]{Bonami2001} again, 
\begin{equation}\label{kernel-volume}
    K(z,z)\asymp\frac{1}{|Q(z,\delta(z))|}\asymp\frac{1}{|E(z,r)|}.
\end{equation}

The Bergman kernel on the diagonal satisfying the following estimates.
\begin{lemma}\cite[Lemma 2.5]{Xiao2026}\label{lm-diagonalBegmankernel}
    Let not only \(r \in (0, 1)\) be fixed but also \(z \in \Omega\). Then
\[
K(z, z) \approx K(w, w), \quad \forall w \in E(z, r).
\]
\end{lemma}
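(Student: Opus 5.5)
The plan is to reduce the comparison of the diagonal kernel values to a comparison of polydisc volumes. I will use \eqref{kernel-volume} and \eqref{polydiscvolume}, which give
\[
K(z,z)\asymp \frac{1}{|Q(z,\delta(z))|}\asymp\frac{1}{\delta(z)^2\tau(z,\delta(z))^2}.
\]
So it suffices to show $\delta(w)\asymp\delta(z)$ and $\tau(w,\delta(w))\asymp\tau(z,\delta(z))$ whenever $w\in E(z,r)$, with constants depending only on $r$ and $\Omega$.

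\textbf{Step 1: reduction to a small radius.} First I would reduce to small $r$. The Kobayashi distance is symmetric and (on a convex, hence complete hyperbolic, domain) is realised up to $\epsilon$ by curves. Hence any $w\in E(z,r)$ can be joined to $z$ by a chain $z=\zeta_0,\zeta_1,\dots,\zeta_N=w$ with $\zeta_{i+1}\in E(\zeta_i,r_1)$. Here $r_1>0$ is a fixed small radius and $N$ depends only on $r$ and $r_1$. Iterating a comparison valid at radius $r_1$ then gives the claim at radius $r$, with constant raised to the power $N$. Points $z$ outside the tubular neighbourhood $U$ form a compact subset of $\Omega$, on which $K(\cdot,\cdot)$ is continuous and positive. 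Since Kobayashi balls of fixed radius about such points stay in a compact subset, this case is trivial, and I may assume $z\in U$.

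\textbf{Step 2: comparing the distances to the boundary.} For $w\in E(z,r_1)$, Lemma \ref{polydiscchain} gives $w\in Q(z,c\,\delta(z))$ with $c=2r_1/(1-r_1)$ small. The first extremal direction $v^{(1)}$ is the transversal one, and $\tau_1(z,s)\asymp s$. So the normal component of $w-z$ is at most $C c\,\delta(z)$, while the complex tangential components move along directions in which $\varrho$ changes by at most $c\,\delta(z)$. This gives $|\varrho(w)-\varrho(z)|\lesssim c\,\delta(z)$, and hence $\delta(w)\asymp\delta(z)$ once $c$ is small. If I want a bound that is not tied to small $c$, symmetry does it: since $z\in E(w,r_1)$ as well, the reverse inequality follows in the same way.

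\textbf{Step 3: comparing the tangential widths.} With $s=\delta(z)\asymp\delta(w)$, I use that $Q(z,s)$ and $Q(w,s)$ intersect, since both contain $w$. Lemma \ref{polydiscembed}(iii) then gives $Q(w,s)\subset Q(z,Cs)$, and symmetrically $Q(z,s)\subset Q(w,Cs)$. The scaling relation $C^{-1}\eta^{1/2}\tau(z,v,r)\le\tau(z,v,\eta r)\le C\eta^{1/M_\Omega}\tau(z,v,r)$, equivalently Lemma \ref{polydiscembed}(i), gives the doubling property $|Q(\cdot,Cs)|\lesssim|Q(\cdot,s)|$, and it also absorbs the change from $\delta(z)$ to $\delta(w)$. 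Together these yield $|Q(w,\delta(w))|\asymp|Q(z,\delta(z))|$. Alternatively, Lemma \ref{polydiscembed}(ii) gives $\tau_j(w,s)\asymp\tau_j(z,s)$ directly for $w\in Q(z,\delta)$ with $\delta$ small. By the first display, the comparison of polydisc volumes is exactly $K(z,z)\asymp K(w,w)$.

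\textbf{Main obstacle.} The delicate point is uniformity. The coordinates and extremal bases depend on both the centre and the radius, so constants must not degenerate as $r\to1$. That is why I would first prove the statement for a small fixed radius, where Lemma \ref{polydiscembed}(ii) applies cleanly, and then propagate it by the chaining argument in Step 1.
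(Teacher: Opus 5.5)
The paper gives no argument of its own for this lemma; it is quoted from \cite[Lemma 2.5]{Xiao2026}. So there is no internal proof to match, and your proposal stands as a correct self-contained derivation from the Section~2 toolkit, with constants depending only on $r$ and $\Omega$.

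It is worth seeing, though, that the facts the paper records just before the lemma give a much shorter route. The radius $\tfrac12\log\frac{1+r}{1-r}$ of $E(\cdot,r)$ adds under $r\oplus s=\frac{r+s}{1+rs}$. Hence symmetry and the triangle inequality for $\beta$ give, for $w\in E(z,r)$,
\[
E(z,r)\subset E(w,r\oplus r),\qquad E(w,r)\subset E(z,r\oplus r).
\]
Since \eqref{kernel-volume} holds for every fixed radius in $(0,1)$, this yields
\begin{gather*}
K(w,w)\asymp|E(w,r\oplus r)|^{-1}\le|E(z,r)|^{-1}\asymp K(z,z),\\
K(z,z)\asymp|E(z,r\oplus r)|^{-1}\le|E(w,r)|^{-1}\asymp K(w,w).
\end{gather*}
This needs no chaining, no comparison $\delta(w)\asymp\delta(z)$, and no use of Lemma~\ref{polydiscembed}(iii). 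What your route buys in exchange is that it uses the polydisc geometry only at one small fixed radius $r_1$. There, Lemma~\ref{polydiscchain} places $E(z,r_1)$ inside a polydisc whose radius is a small multiple of $\delta(z)$. The short route instead relies on \eqref{Kobayashivolume} at the radius $r\oplus r$, which may be close to $1$.

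A few details in your write-up should be tightened; none is serious.
\begin{enumerate}
\item[(a)] In Step~2, increments of $\varrho$ along different extremal directions do not simply add, since $\varrho$ is not linear. What you actually get is the one-sided bound $\varrho<\varrho(z)+Cs$ on $Q(z,s)$. It comes from the definition of $\tau$, the scaling inequality, and convexity of $\{\varrho<\varrho(z)+Cs\}$: average the $n$ one-directional displacements, each enlarged by a factor $n$. This yields $\delta(w)\gtrsim\delta(z)$ once $c$ is small, and the reverse inequality genuinely needs your symmetry remark.
\item[(b)] In Step~3, $Q(z,\cdot)$ is not literally increasing in the radius, because the extremal basis changes with it. Use Lemma~\ref{polydiscembed}(i) to put $w$ in $Q(z,C\delta(z))$, and apply (iii) at that radius.
\item[(c)] The chaining requires the radius-$r_1$ comparison uniformly for all centres, including those near the inner edge of $U$, where $w$ may leave $U$. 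Shrinking the neighbourhood and absorbing the rest into your compact case takes care of this.
\end{enumerate}
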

 
By \cite[Lemma 2.12]{Li2024} and \eqref{kernel-volume}, there exists \(r_0>0\) such that 
\begin{equation}\label{eq-lowerkernelestimates}
    |K(z,w)|\gtrsim \frac{1}{\delta(z)^2\tau(z,\delta(z))^2}, \,\, (z,r,w)\in U\times (0,r_0)\times E(z,r).
\end{equation}

The Forelli-Rudin type estimate is as follows.
\begin{lemma}\cite[Proposition 2.10]{Xiao2026}\label{Forelli-Rudin}
Let  
\[
s > \frac{n}{n+1} \quad \text{and} \quad 1 - s < t < \frac{1}{n+1}.
\]
Then  
\[
I_{s,t}(w) = \int_{\Omega} |K(z, w)^s| K(z, z)^t \, dv(z) \lesssim K(w, w)^{s+t-1}.
\]
In particular, \( 0 < p < \infty \) and \( N \) be a positive integer such that \( N > \frac{1}{p} \). Then
\[
\|K(\cdot, w)^N\|_{A^p(\Omega)} \lesssim K(w, w)^{N - \frac{1}{p}}, \quad \forall w \in \Omega.
\]
\end{lemma}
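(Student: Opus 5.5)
The plan is to reduce $I_{s,t}(w)$ to a dyadic decomposition of $\Omega$ adapted to the polydiscs $Q(w,\cdot)$. We then estimate each dyadic piece using the off-diagonal kernel bound $|K(z,w)|\lesssim |Q(z,d(z,w))|^{-1}$ and the diagonal bound \eqref{kernel-volume}, and sum a geometric series.

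\emph{Preliminary reductions.} If $w$ lies in a fixed compact subset of $\Omega$, then $K(w,w)\asymp 1$. In that case $|K(z,w)|$ is bounded uniformly in $z$, and the estimate reduces to $\int_\Omega K(z,z)^t\,dV(z)<\infty$, which is the $k=0$ type computation below with $R\asymp 1$. So we may assume $w\in U$ and write $\delta=\delta(w)$. Also $d(z,w)\ge \delta$ always.

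\emph{Decomposition.} For $k\ge 0$ set $R_k=2^k\delta$ and
\[
A_k=\{z\in\Omega:\ R_{k-1}< d(z,w)\le R_k\},
\]
with $A_0=\{z: d(z,w)\le \delta\}$. Only $k$ with $R_k\lesssim 1$ occur. By the definition of $d_b$ and Lemma \ref{polydiscembed}(iii), $A_k\subset Q(w,CR_k)$. On $A_k$, Lemma \ref{polydiscembed}(i),(ii) give $|Q(z,d(z,w))|\asymp |Q(w,R_k)|$. Hence
\[
|K(z,w)|^s\lesssim |Q(w,R_k)|^{-s}\quad (z\in A_k).
\]

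\emph{Integrating the diagonal weight (the main obstacle).} The key claim is
\begin{equation*}
J_k:=\int_{Q(w,CR_k)} K(z,z)^t\,dV(z)\lesssim |Q(w,R_k)|^{1-t}.
\end{equation*}
By \eqref{kernel-volume} and \eqref{polydiscvolume}, $K(z,z)^t\asymp(\delta(z)^2\tau(z,\delta(z))^2)^{-t}$.

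For $0\le t<\frac1{n+1}$ we use the lower inequality $\tau(z,\eta r)\ge C^{-1}\eta^{1/2}\tau(z,r)$ with $\eta=\delta(z)/R_k$, together with $\tau(z,R_k)\asymp\tau(w,R_k)$ for $z\in Q(w,CR_k)$. This gives
\[
K(z,z)^t\lesssim |Q(w,R_k)|^{-t}\Bigl(\tfrac{R_k}{\delta(z)}\Bigr)^{2t+(n-1)t}.
\]
In the extremal coordinates at $w$, $\delta(z)$ is comparable to the distance in the normal direction $w_1$. The polydisc has normal extent $\asymp R_k$. So integrating $(R_k/\delta(z))^{(n+1)t}$ over $Q(w,CR_k)$ gives $\lesssim |Q(w,R_k)|$ exactly when $(n+1)t<1$. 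This is where the hypothesis $t<\frac1{n+1}$ is used.

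For $t<0$ the estimate is easier: $K(z,z)^{t}\asymp|Q(z,\delta(z))|^{|t|}$, and $Q(z,\delta(z))\subset Q(w,C'R_k)$, so the integrand is $\lesssim |Q(w,R_k)|^{-t}$.

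The delicate point is the justification that $\delta(z)$ is comparable to the normal coordinate uniformly on $Q(w,CR_k)$, which uses convexity and the normal projection on $U$.

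\emph{Summation.} Combining the two bounds,
\[
I_{s,t}(w)\lesssim\sum_k |Q(w,R_k)|^{1-s-t}.
\]
By Lemma \ref{polydiscembed}(i) (the inclusion $Q(w,\eta r)\subset\eta^{1/M_\Omega}Q(w,Cr)$), the volumes satisfy
\[
|Q(w,R_k)|\gtrsim 2^{k(2+2(n-1)/M_\Omega)}|Q(w,\delta)|.
\]
Since $1-s-t<0$, the series is dominated by its first term, so
\[
I_{s,t}(w)\lesssim |Q(w,\delta)|^{1-s-t}\asymp K(w,w)^{s+t-1}
\]
by \eqref{kernel-volume}. The hypothesis $s>\frac n{n+1}$ serves only to make the range $1-s<t<\frac1{n+1}$ nonempty.

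\emph{The particular case.} Take $s=Np>1$ and $t=0$; this is admissible since $1-Np<0<\frac1{n+1}$. Then
\[
\|K(\cdot,w)^N\|_{A^p}^p=I_{Np,0}(w)\lesssim K(w,w)^{Np-1},
\]
and taking $p$-th roots gives the stated bound $\|K(\cdot,w)^N\|_{A^p(\Omega)}\lesssim K(w,w)^{N-\frac1p}$.
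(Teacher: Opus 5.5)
The paper gives no proof of this lemma. It is quoted from \cite[Proposition 2.10]{Xiao2026}, so there is no internal argument to compare yours against. Your proof is correct in substance and self-contained. It is the standard dyadic-shell argument for kernel integrals on convex domains of finite type:
\begin{itemize}
\item decompose into shells $\{d(z,w)\asymp 2^k\delta(w)\}$;
\item use the off-diagonal bound $|K(z,w)|\lesssim|Q(z,d(z,w))|^{-1}$;
\item prove the local estimate $\int_{Q(w,CR_k)\cap\Omega}K(z,z)^t\,dV\lesssim|Q(w,R_k)|^{1-t}$;
\item sum the resulting geometric series.
\end{itemize}
Its merit over the bare citation is that it shows where each hypothesis enters. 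The condition $t<\frac1{n+1}$ is needed only for the integrability of $\delta(z)^{-(n+1)t}$. Here $n+1=2+(n-1)$ comes from the linear scaling of the normal radius together with the worst-case $\eta^{1/2}$ scaling of the $n-1$ tangential radii. The condition $s+t>1$ is what makes the shell sum converge. The assumption $s>\frac n{n+1}$ is redundant. This also explains the paper's later remark that a better threshold on a particular domain would need a sharper substitute for Lemma~\ref{Forelli-Rudin}.

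Some details should be tightened, though none is a real gap.

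\emph{Empty first shell.} Since $d(z,w)\geq\delta(z)+\delta(w)>\delta(w)$, your set $A_0$ is empty.

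\emph{Far region.} The polydisc calculus is only available for radii below $\varepsilon_0$, so the region where $d(z,w)$ is bounded below must be treated separately. There $|K(z,w)|\lesssim1$. Also $\int_\Omega K(z,z)^t\,dV<\infty$, because $K(z,z)\lesssim\delta(z)^{-(n+1)}$ and $(n+1)t<1$. Finally $1\lesssim K(w,w)^{s+t-1}$, because $s+t>1$.

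\emph{Integrating $\delta(z)^{-a}$.} What you need is that $\delta(z)$ is comparable to the distance to $\partial\Omega$ along the $\mathrm{Re}\,\zeta_1$ direction. For $R_k$ small this follows from $\partial\rho/\partial(\mathrm{Re}\,\zeta_1)\geq c>0$ on $Q(w,CR_k)$: along each real line in that direction, $|\rho|$ is bi-Lipschitz to the distance from its zero. Fubini then gives $\int_{Q(w,CR_k)\cap\Omega}\delta(z)^{-a}\,dV(z)\lesssim R_k^{-a}|Q(w,R_k)|$ for $a<1$. This uses only smoothness and transversality, not convexity.

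\emph{Scaling exponents.} Your bookkeeping is the right one: the normal radius is linear in $r$, and the bounds $\eta^{1/2}$ and $\eta^{1/M_\Omega}$ apply only in complex tangential directions. The inclusions in Lemma~\ref{polydiscembed}(i) as printed treat the normal direction like the others, so the lower one cannot hold there. The upper one only yields growth $2^{2nk/M_\Omega}$ rather than your exponent. Any positive exponent suffices for the summation.
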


Next we recall the atomic type decompositon theorem that we need in the proof of \(\text{(i)}\implies\text{(iii)}\) in Theorem \ref{thm:main}.
\begin{lemma}\cite[Proposition 2.15]{Xiao2026}\label{atomic}
Let not only \( \eta > 0 \) be fixed but also \(\{w_k\}\) be an \(r\)-lattice. For \( 0 < p < \infty \) and \(\{c_k\} \in \ell^p\), define
\[
f(z) = \sum_{k=0}^{\infty} c_k K(w_k, w_k)^{\frac{1}{p}-N} K(z, w_k)^N, \quad \forall z \in \Omega,
\]
where \( N \) is a positive integer satisfying \( N > \frac{1}{p} \). Then
\[
f \in A^p(\Omega) \quad \text{with } \|f\|_{A^p(\Omega)} \lesssim \|\{c_k\}\|_{L^p}.
\]
\end{lemma}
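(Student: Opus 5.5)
The plan is to split into the cases $0<p\le 1$ and $p>1$. In both cases we first check that the series converges locally uniformly on $\Omega$, so that $f\in\mathcal H(\Omega)$. This follows from the pointwise bound obtained below, together with the boundedness of $K(z,w_k)$ for $z$ in a compact subset of $\Omega$ (the off-diagonal estimate $|K(z,w)|\lesssim |Q(z,d(z,w))|^{-1}$ with $d(z,w)\ge\delta(z)$).

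\emph{Case $0<p\le1$.} We use the $p$-triangle inequality $|a+b|^p\le|a|^p+|b|^p$ inside the integral:
\[
\|f\|_{A^p}^p\le\sum_k|c_k|^p\,K(w_k,w_k)^{1-Np}\,\|K(\cdot,w_k)^N\|_{A^p}^p .
\]
The second part of Lemma~\ref{Forelli-Rudin} (with $N>1/p$) gives $\|K(\cdot,w_k)^N\|_{A^p}^p\lesssim K(w_k,w_k)^{Np-1}$. Hence every summand is $\lesssim|c_k|^p$, and $\|f\|_{A^p}\lesssim\|\{c_k\}\|_{\ell^p}$ follows at once.

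\emph{Case $p>1$.} Let $p'$ be the conjugate exponent. We write $|f(z)|\le\sum_k|c_k|K_k^{1/p-N}|K(z,w_k)|^N$, where $K_k=K(w_k,w_k)$. We split
\[
K_k^{1/p-N}|K(z,w_k)|^N=\bigl(K_k^{a}|K(z,w_k)|^{s}\bigr)\bigl(K_k^{b}|K(z,w_k)|^{N-s}\bigr),
\qquad a+b=\tfrac1p-N,
\]
and apply H\"older with $(p,p')$. This gives
\[
|f(z)|^p\le\Bigl(\sum_k|c_k|^pK_k^{ap}|K(z,w_k)|^{sp}\Bigr)\Bigl(\sum_kK_k^{bp'}|K(z,w_k)|^{(N-s)p'}\Bigr)^{p/p'}.
\]
The key auxiliary step is to convert the lattice sum into an integral:
\[
\sum_kK_k^{t'}|K(z,w_k)|^{\sigma}\lesssim\int_\Omega K(w,w)^{t'+1}|K(z,w)|^{\sigma}\,dV(w).
\]
To prove it we use the following facts.
\begin{itemize}
\item The function $w\mapsto|K(z,w)|^\sigma$ is plurisubharmonic (as $|h|^\sigma$ with $h$ holomorphic), so it satisfies the sub-mean value property on the polydisc $Q(w_k,\frac rn\delta(w_k))\subset E(w_k,r)$ by Lemma~\ref{polydiscchain}.
\item We have $|E(w_k,r)|\asymp K_k^{-1}$ by \eqref{kernel-volume}, and $K(w,w)\asymp K_k$ on $E(w_k,r)$ by Lemma~\ref{lm-diagonalBegmankernel}.
\item The balls in the lattice have finite overlap (Lemma~\ref{Kobayashilattice}).
\end{itemize}

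With this, Lemma~\ref{Forelli-Rudin} bounds the second factor by $K(z,z)^{(bp'+1)+(N-s)p'-1}$. We then choose $b$ so that this power cancels exactly the power of $K(z,z)$ that will appear after integrating the first factor. Integrating the first factor in $z$ and using Fubini, we again apply Lemma~\ref{Forelli-Rudin}, now in the variable $z$ with parameters $(sp,\,t)$, where $t$ is the resulting exponent of $K(z,z)$. This yields $\int_\Omega K(z,z)^{t}|K(z,w_k)|^{sp}\,dV(z)\lesssim K_k^{sp+t-1}$. The exponents are designed so that $K_k^{ap}K_k^{sp+t-1}=1$, which leaves $\|f\|_{A^p}^p\lesssim\sum_k|c_k|^p$.

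The main obstacle is the bookkeeping of the exponents. Both applications of Lemma~\ref{Forelli-Rudin} require $\sigma>\frac n{n+1}$ and $1-\sigma<t<\frac1{n+1}$, and this admissible window is narrow. The plan is to first fix $t$ slightly below $\frac1{n+1}$. Then we take $s$ with $sp>1$, and use the freedom to enlarge $N$ and to move $s$ so that $(N-s)p'$ is large. This keeps both $t$-parameters inside the window and both $\sigma$-parameters large, so both hypotheses can be met simultaneously. If the window still fails, the fallback is to route the case $p>1$ through duality: we would identify $(A^{p'})^*$ with $A^p$ under the Bergman pairing and test $f$ against $g\in A^{p'}$. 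This gives $\langle f,g\rangle=\sum_kc_kK_k^{1/p-N}\,\overline{\partial\text{-free evaluation of }g\,K(\cdot,w_k)^{N-1}}$-type terms, which are controlled through a Carleson-type sampling inequality for $|g|^{p'}$ on the lattice.
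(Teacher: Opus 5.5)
\textbf{Gap in the case $p>1$.} The paper does not prove this lemma; it quotes it from \cite[Proposition 2.15]{Xiao2026}, so your proposal has to stand on its own. Your case $0<p\le1$ is complete. The $p$-triangle inequality together with the second half of Lemma~\ref{Forelli-Rudin} gives $\|f\|_{A^p}^p\lesssim\sum_k|c_k|^p$, and the lattice property is not even used there. For $p>1$ your strategy is the right one: H\"older, then turning the second lattice sum into an integral via the one-sided sub-mean value inequality on the polydiscs $Q(w_k,\frac rn\delta(w_k))$, then two applications of Lemma~\ref{Forelli-Rudin}. Its advantage is that it never needs the two-sided comparability $|K(z,u)|\asymp|K(z,v)|$ for nearby $u,v$, which is not available in general here (cf.\ Example~\ref{example1}).

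However, you leave the exponents unchosen, and the lever you propose, ``enlarge $N$'', is not available. $N$ is part of the definition of $f$, and the lemma must hold for every integer $N>\frac1p$, in particular $N=1$ when $p>1$. There the window is tight. With $N=1$ and $sp>1$, the requirement $bp'+1<\frac1{n+1}$ forces your final exponent $t=(N-s+b)p$ below $\frac{p-1}{n+1}$. So $t$ cannot be taken arbitrarily close to $\frac1{n+1}$ when $1<p<2$, contrary to your plan. The duality fallback is unnecessary. It is also not a valid step as written, since for $N\ge2$ the pairing of $g$ with $K(\cdot,w_k)^N$ is not a point evaluation of $g$.

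The gap closes with a choice valid for all $N\ge1$. Put $K_k=K(w_k,w_k)$ and $\sigma=(N-\frac1p)p'\ge1$. Fix $0<\beta<\frac1{n+1}\min\{\frac1p,\frac1{p'}\}$, and take $s=\frac1p$, $a=-\beta$. Then
\[
|f(z)|^p\le\Bigl(\sum_k|c_k|^pK_k^{-\beta p}|K(z,w_k)|\Bigr)\Bigl(\sum_kK_k^{\beta p'-\sigma}|K(z,w_k)|^{\sigma}\Bigr)^{p-1}.
\]
Your discretization bounds the second sum by $\int_\Omega K(w,w)^{\tau}|K(z,w)|^{\sigma}\,dV(w)$ with $\tau=1-\sigma+\beta p'$. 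Since $\sigma\ge1$ and $1-\sigma<\tau\le\beta p'<\frac1{n+1}$, Lemma~\ref{Forelli-Rudin} bounds this integral by $K(z,z)^{\beta p'}$, so the second factor is $\lesssim K(z,z)^{\beta p}$. Next apply Lemma~\ref{Forelli-Rudin} with $(s,t)=(1,\beta p)$, which is admissible since $0<\beta p<\frac1{n+1}$. This gives $\int_\Omega|K(z,w_k)|K(z,z)^{\beta p}\,dV(z)\lesssim K_k^{\beta p}$, which cancels $K_k^{-\beta p}$ and yields $\|f\|_{A^p}^p\lesssim\sum_k|c_k|^p$. Applied to tails, and using $|K(z,w_k)|\lesssim K(z,z)$ and $K_k\gtrsim1$, the same pointwise bound also gives the locally uniform convergence needed for holomorphy of $f$.
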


We end this section with a property of plurisubharmonic functions.
\begin{lemma}\cite[Corollary 2.12]{Xiao2026}\label{subharmonic}
For \( r \in (0, 1) \), set \( R = \frac{1+r}{2} \in (0, 1) \). Then there exist two constants \( C_r \) and \( C_r' \) depending on \( r \) such that every continuous non-negative plurisubharmonic function \( \Phi : \Omega \to \mathbb{R}^+ \) enjoys
\begin{equation}
\begin{cases}
\Phi(z_0) \leq \dfrac{C_r}{v(E(z_0, r))} \displaystyle\int_{E(z_0, r)} \Phi \, dv, & \forall z_0 \in \Omega, \\[1.2ex]
\Phi(z) \leq \dfrac{C_r'}{v(E(z_0, r))} \displaystyle\int_{E(z_0, R)} \Phi \, dv, & \forall (z_0, z) \in \Omega \times E(z_0, r), \\[1.2ex]
\Phi(z_0) \leq \dfrac{C_\eta}{v(Q(z_0, \eta\delta(z_0)))} \displaystyle\int_{Q(z_0, \eta\delta(z_0))} \Phi \, dv, & \forall (z_0, \eta) \in \Omega \times \mathbb{R}_+.
\end{cases}
\end{equation}
\end{lemma}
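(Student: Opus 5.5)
The plan is to reduce all three inequalities to the classical sub-mean value property of plurisubharmonic functions over Euclidean polydiscs centered at the point. Then we compare volumes of the various polydiscs and Kobayashi balls using \eqref{polydiscvolume}, \eqref{Kobayashivolume}, \eqref{kernel-volume} and Lemma \ref{polydiscembed}.

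\textbf{Basic step.} Fix $z_0\in\Omega$. Let $P$ be any polydisc centered at $z_0$ in some unitary coordinate system with $\overline P\subset\Omega$. Then
\[
\Phi(z_0)\le \frac{1}{|P|}\int_P\Phi\,dV.
\]
This holds because a plurisubharmonic function is subharmonic in each complex variable separately. We apply the one-variable sub-mean value inequality on each circle and integrate iteratively over the $n$ discs, using polar coordinates in each factor. Continuity and non-negativity of $\Phi$ make the Fubini manipulations legitimate.

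The polydisc $Q(z_0,s)$ is exactly such a polydisc, since it is defined in the $s$-extremal orthonormal coordinates centered at $z_0$. By Lemma \ref{polydiscchain}, $Q(z_0,\tfrac rn\delta(z_0))\subset E(z_0,r)\subset\Omega$. For points $z_0\notin U$, the quantity $\delta(z_0)$ is bounded below and one can simply use Euclidean polydiscs of comparable size, so we concentrate on $z_0\in U$.

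\textbf{First inequality.} Take $P=Q(z_0,\tfrac rn\delta(z_0))$. By \eqref{polydiscvolume} and the two-sided doubling $\tau(z,v,\eta s)\asymp_\eta\tau(z,v,s)$, we get
\[
|P|\asymp_r \delta(z_0)^2\tau(z_0,\delta(z_0))^2\asymp |E(z_0,r)|
\]
by \eqref{Kobayashivolume}. Since $\Phi\ge0$ and $P\subset E(z_0,r)$, we have $\int_P\Phi\le\int_{E(z_0,r)}\Phi$, which gives the first estimate with a constant $C_r$.

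\textbf{Third inequality.} If $\eta$ is small enough that $Q(z_0,\eta\delta(z_0))\subset\Omega$, this is just the basic step. For general $\eta$, we read the integral over $Q(z_0,\eta\delta(z_0))\cap\Omega$. We apply the basic step to $Q(z_0,c\,\delta(z_0))$ with $c=\min(\eta,1/(2n))$. Lemma \ref{polydiscembed}(i) together with \eqref{polydiscvolume} then shows that $|Q(z_0,c\delta(z_0))|\asymp_\eta|Q(z_0,\eta\delta(z_0))|$, which gives $C_\eta$.

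\textbf{Second inequality.} Let $z\in E(z_0,r)$. Choose $r'\in(0,1)$ so that the Kobayashi radii satisfy $\tanh^{-1}r'+\tanh^{-1}r\le\tanh^{-1}R$, which is possible since $R>r$. The triangle inequality for $\beta$ then gives $E(z,r')\subset E(z_0,R)$. The first inequality applied at $z$ yields
\[
\Phi(z)\le \frac{C_{r'}}{|E(z,r')|}\int_{E(z_0,R)}\Phi\,dV.
\]
By \eqref{kernel-volume} and Lemma \ref{lm-diagonalBegmankernel}, we have $|E(z,r')|\asymp K(z,z)^{-1}\asymp K(z_0,z_0)^{-1}\asymp|E(z_0,r)|$. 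This yields the constant $C_r'$.

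\textbf{Main obstacle.} The delicate point is ensuring that all volume comparisons are uniform in $z_0$, including near the boundary, where the polydiscs are highly anisotropic. The plan handles this by never comparing shapes directly. Instead it passes through the diagonal Bergman kernel via \eqref{kernel-volume}, and through the scaling relations for $\tau$ in Lemma \ref{polydiscembed}(i).
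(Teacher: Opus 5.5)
The paper gives no proof of this lemma; it is quoted from \cite[Corollary 2.12]{Xiao2026}. Your argument is correct and is the standard way such statements are derived: the sub-mean value property on the extremal polydisc $Q(z_0,\frac rn\delta(z_0))\subset E(z_0,r)$, volume comparison via Lemma \ref{polydiscchain} and \eqref{Kobayashivolume}, and, for the second estimate, the shrunken ball $E(z,r')\subset E(z_0,R)$ combined with \eqref{kernel-volume} and Lemma \ref{lm-diagonalBegmankernel}.

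Two small points to tighten:
\begin{enumerate}
\item Take the polydisc volume comparison directly from Lemma \ref{polydiscembed}(i), which gives $|Q(z_0,s)|\le (C/t)^n|Q(z_0,ts)|$ for $0<t<1$. The directionwise bound on $\tau(z,v,\cdot)$ does not immediately control $\tau(z_0,\cdot)$, because the extremal basis changes with the radius.
\item Let the ``interior'' region where you switch to Euclidean polydiscs be $\{\delta\geq c_r\}$, with $c_r$ depending on $r$. This keeps the radius $\frac{2r}{1-r}\delta(z_0)$ of the enclosing polydisc $Q(z_0,\frac{2r}{1-r}\delta(z_0))$ in the range where Lemma \ref{polydiscembed} applies.
\end{enumerate}
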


\section{Geometric  Schatten class Toeplitz operators}\label{geometric-characterization}
\subsection{The equivalence of (i) and (iii) in Theorem \ref{thm:main}} 
\quad\,In this section, we will show the equivalence of (i) and (iii) in Theorem \ref{thm:main}. In particular, we will prove the following theorem.
\begin{theorem}\label{mainth1-3}
    Let  $\mu$ be a finite positive Borel measure on $\Omega$, and
$p>0$.  There is $r_*>0$, depending only on $\Omega$, such that $T_\mu\in S_p(A^2(\Omega))$ if and only if for every  $0<r<r_*$ and  
      every Kobayashi $r$-lattice $\{a_j\}$,
      \[
       \sum_j\widehat{\mu}_r(a_j)^p<\infty.
      \]
\end{theorem}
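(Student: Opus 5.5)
The plan is to prove the two implications separately, treating the case \(0<p<1\) as the main one. For \(p\ge 1\) the same scheme works, or one can reduce to Theorem \ref{XiaoToeplitz}. Fix a Kobayashi \(r\)-lattice \(\{a_j\}\) and set \(R=\frac{1+r}{2}\). By \eqref{kernel-volume} and Lemma \ref{lm-diagonalBegmankernel}, \(\widehat\mu_r(a_j)\asymp \mu(E(a_j,r))K(a_j,a_j)\), so the two forms of condition (iii) are interchangeable. The threshold \(r_*\) will be taken small enough for three things at once: \eqref{eq-lowerkernelestimates} holds on \(E(a_j,r)\); Lemma \ref{polydiscchain} gives \(E(a_j,R)\subset Q(a_j,c\,\delta(a_j))\) with \(Q(a_j,C' c\,\delta(a_j))\subset\Omega\) for a fixed larger constant \(C'\); and the scaled polydisc coordinates of Lemma \ref{polydiscembed} are available.

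\emph{Sufficiency.} Choose disjoint Borel sets \(D_j\subset E(a_j,r)\) covering \(\Omega\), and write \(\mu=\sum_j\mu_j\) with \(\mu_j=\mu|_{D_j}\). Then \(T_\mu=\sum_j T_{\mu_j}\) is a sum of positive operators, and Rotfel'd's inequality (iii) of Section \ref{sec-Schatten} gives \(\|T_\mu\|_{S_p}^p\le\sum_j\|T_{\mu_j}\|_{S_p}^p\). The second inequality of Lemma \ref{subharmonic}, applied to \(|f|^2\), yields
\[
\langle T_{\mu_j}f,f\rangle=\int_{D_j}|f|^2\,d\mu\le C\,\widehat\mu_r(a_j)\int_{E(a_j,R)}|f|^2\,dV .
\]
Hence \(T_{\mu_j}\le C\,\widehat\mu_r(a_j)\,T_{\chi_{E(a_j,R)}}\), and by monotonicity (iv) everything reduces to the following key lemma: \(\sup_a\|T_{\chi_{E(a,R)}}\|_{S_p}<\infty\).

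To prove the key lemma, I will use the min--max principle for \(T_{\chi_E}=J^*J\), where \(J\colon A^2\to L^2(E)\) is restriction. In the extremal coordinates at \(a\), rescaled so that \(Q(a,C'c\,\delta(a))\) becomes a unit polydisc, \(E(a,R)\) sits inside a polydisc of radius \(\theta<1\). The degree-\(m\) Taylor polynomial at \(a\) in these coordinates defines a finite-rank operator of rank \(\binom{m+n}{n}\). By the Cauchy estimates and the sub-mean-value property on the big polydisc (third line of Lemma \ref{subharmonic}), its error on \(E(a,R)\) is at most \(C\theta^{m}\|f\|_{A^2}\). The Jacobians cancel because \(|E(a,R)|\asymp|Q(a,\delta(a))|\). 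This gives \(s_k(J)\lesssim\theta^{c k^{1/n}}\) uniformly in \(a\), which is \(p\)-summable for every \(p>0\).

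\emph{Necessity.} Partition the lattice into finitely many subfamilies, each \(\beta\)-separated by a large constant \(L\). For such a subfamily \(\{a_j\}_{j\in J}\), define \(A e_j=k_{a_j}\). Lemma \ref{atomic} with \(p=2\), \(N=1\) shows that \(A\colon\ell^2\to A^2\) is bounded, so \(B=A^*T_\mu A\) satisfies \(\|B\|_{S_p}\le\|A\|^2\|T_\mu\|_{S_p}\). Replace \(\mu\) by \(\nu=\sum_{j\in J}\mu|_{E(a_j,r)}\le\mu\), which gives \(0\le A^*T_\nu A\le B\). Split \(A^*T_\nu A=D+E\) into diagonal and off-diagonal parts. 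By \eqref{eq-lowerkernelestimates}, \(D_{jj}\ge\int_{E(a_j,r)}|k_{a_j}|^2\,d\mu\gtrsim\widehat\mu_r(a_j)\). For \(p\le1\) the off-diagonal part satisfies \(\|E\|_{S_p}^p\le\sum_{i\ne j}|E_{ij}|^p\), and each entry is
\[
E_{ij}=\sum_{l}\int_{E(a_l,r)}k_{a_i}\overline{k_{a_j}}\,d\mu,
\]
which is controlled by the off-diagonal upper bound \(|K(z,w)|\lesssim|Q(z,d(z,w))|^{-1}\) together with the doubling properties of Lemma \ref{polydiscembed}. I will work only with this upper bound, because of the Lu Qi-Keng issue in Example \ref{example1}.

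The main obstacle is that \(E\) must be small in \(S_p\) relative to \(D\), namely \(\|E\|_{S_p}^p\le\varepsilon\sum_j D_{jj}^p\). Only then does the quasi-triangle inequality (ii) give \(\sum_j\widehat\mu_r(a_j)^p\lesssim\|B\|_{S_p}^p+\|E\|_{S_p}^p\) with the error absorbed. The cross terms with \(l\ne i,j\) are the delicate part. Here I will use probabilistic selection: choose each lattice point independently with probability \(\rho\) and compute expectations. The diagonal sum scales like \(\rho\sum_j\widehat\mu_r(a_j)^p\), while each off-diagonal pair carries a factor \(\rho^2\) together with kernel decay, which is summable because of the separation \(L\). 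Hence some realization has off-diagonal mass at most \(\varepsilon\) times its diagonal mass while keeping a fixed fraction of \(\sum_j\widehat\mu_r(a_j)^p\). Summing over the finitely many subfamilies then gives (iii), with constants independent of \(\mu\).
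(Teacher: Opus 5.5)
Both halves follow the paper's overall plan, but the necessity half has a genuine gap: the atoms $Ae_j=k_{a_j}$ (that is, $N=1$) are too weak for $p<1$.

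Your entrywise bound $\|E\|_{S_p}^p\le\sum_{i\ne j}|E_{ij}|^p$ leads to $|E_{ij}|^p\le\sum_l d_l^p\,y_{li}^p\,y_{lj}^p$, with $d_l=\mu(E(a_l,r))K_l$ and $y_{li}=K_l^{-1/2}\sup_{E(a_l,r)}|k_{a_i}|\ge|K(a_l,a_i)|/\sqrt{K_lK_i}$. This only closes if
\[
\sup_l\sum_i\Bigl(\frac{|K(a_l,a_i)|}{\sqrt{K_lK_i}}\Bigr)^p<\infty .
\]
The kernel upper bound together with separation controls this sum only by $K_l^{-p/2}\int_\Omega|K(a_l,w)|^pK(w,w)^{1-p/2}\,\dd V(w)$. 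That is Lemma~\ref{Forelli-Rudin} with $s=p$, $t=1-\frac p2$, and the requirement $t<\frac1{n+1}$ forces $p>\frac{2n}{n+1}\ge1$.

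The loss is real, not technical. On $\B^n$, take $\mu=\delta_0$, so $T_\mu$ has rank one, and let $J$ be an $L$-separated net with $0\in E(a_l,r)$ for some $l\in J$. Then $|k_{a_i}(0)|\asymp(1-|a_i|^2)^{(n+1)/2}$, and the shell $1-|a_i|\approx2^{-k}$ contains $\asymp_L 2^{kn}$ points of $J$. Hence
\[
\sum_{i\ne j}|E_{ij}|^p=\Bigl(\sum_{i\in J}|k_{a_i}(0)|^p\Bigr)^2-\sum_{i\in J}|k_{a_i}(0)|^{2p}=\infty
\]
for every $p\le\frac{2n}{n+1}$, and it is unbounded over finite truncations, while $\sum_l\widehat\mu_r(a_l)^p<\infty$. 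Separation by $L$ only changes constants, and random thinning only multiplies by $\rho^2$ or $\rho^3$. Neither can make the off-diagonal part small relative to the diagonal.

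The missing idea is to use high-order atoms $h_j=K_j^{1/2-N}K(\cdot,a_j)^N$ with $p(N-\frac12)>\frac n{n+1}$:
\begin{itemize}
\item Lemma~\ref{atomic} with exponent $2$ still makes $A$ bounded.
\item \eqref{eq-lowerkernelestimates} still gives $|h_j|^2\gtrsim K_j$ on $E(a_j,r)$, so your diagonal lower bound survives.
\item Lemma~\ref{Forelli-Rudin} with $s=Np$, $t=1+\frac p2-Np$ now gives $\sup_i\sum_jy_{ij}^p<\infty$ (Lemma~\ref{lem:atom-sum}), hence $\sum_{j,k}|b_{ijk}|^p\lesssim d_i^p$ (Lemma~\ref{lem:three-index}).
\end{itemize}
With that change your Bernoulli thinning works exactly like the paper's $M$-colouring in Proposition~\ref{prop:necessity}. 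The separated subfamilies become unnecessary if you use the disjoint cells $D_i$. You must also first restrict to a finite index set, so that the quantity you absorb is finite.

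There is a smaller, fixable problem in your sufficiency part. Since $R=\frac{1+r}2\ge\frac12$ however small $r_*$ is, you cannot place $E(a,R)$ inside $\theta\,Q$ for a polydisc $Q\subset\Omega$ centred at $a$. Already on $\D$, $E(a,R)$ contains points at distance $\frac{R(1-|a|^2)}{1-R|a|}\sim\frac{2R}{1-R}\,\delta(a)$ from $a$. This exceeds $\delta(a)$ once $R>\frac13$, so a Taylor expansion at $a$ cannot control $f$ on $E(a,R)$. Either of two repairs works:
\begin{itemize}
\item Apply the first inequality of Lemma~\ref{subharmonic} at each $z\in E(a,r)$ on $E(z,s)$ with $s$ small, using $|E(z,s)|\asymp|E(a,r)|$ from \eqref{kernel-volume} and Lemma~\ref{lm-diagonalBegmankernel}. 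The enlarged ball is then $E(a,r\oplus s)$ with $r\oplus s\le r_\sharp$.
\item Drop the domination step and, as in Lemma~\ref{singularvaluedecay}, estimate $\|(J_\nu-R_m)f\|_{L^2(\nu)}\le\nu(E(a,r))^{1/2}\sup_{E(a,r)}|F-\mathcal C_mF|$ directly for $\nu=\mu|_{D_j}$.
\end{itemize}
With that repair your sufficiency argument is the paper's.
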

\begin{remark}
For \(p\geq 1\), the equivalence in Theorem \ref{mainth1-3} has been proved in \cite[Theorem 3.1]{Xiao2026}, we  only need prove the case \(0<p<1\).   
\end{remark}

\subsubsection{\textbf{Sufficiency}}

To prove the sufficiency of the lattice condition, we first need uniform estimates of singular vales  for Toeplitz operators localized to small Kobayashi balls. The following lemma places each such ball inside a pair of polydiscs with a contraction factor independent of the center which allows finite-rank Taylor approximation.  We rewrite \eqref{def-Mcnealpolydisc} as 
\[
Q(z, \varepsilon) = \left\{ z + \sum_{k=1}^{n} \zeta_k v_k(z, \varepsilon) : |\zeta_k| < \tau_k(z, \varepsilon), \ 1 \leq k \leq n \right\}.
\]
\begin{lemma}
\label{det-comparison}
There are $r_\sharp\in(0,1)$, $\vartheta\in(0,1)$, and constants
$c_0,C_0>0$ such that, for every $a\in\Omega$, there is an invertible
complex affine map
\[
 \phi_a(\zeta)=a+\mathscr{A}_a\zeta
\]
with
\begin{equation*}
 E(a,r_\sharp)\subset\phi_a(\vartheta\D^n)
 \subset\phi_a(\D^n)
 \subset\Omega
\end{equation*}
and
\begin{equation*}
 c_0K(a,a)^{-1}
 \leq |\det \phi_a|^2
 \leq C_0K(a,a)^{-1}.
\end{equation*}
The same constants work with $E(a,r)$ in place of
$E(a,r_\sharp)$ whenever $0<r\leq r_\sharp$.
\end{lemma}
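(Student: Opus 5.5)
The plan is to take $\phi_a$ to be the scaled McNeal polydisc map at scale $\varepsilon=c\,\delta(a)$. Then the comparison of $|\det\phi_a|^2$ with $K(a,a)^{-1}$ comes directly from the volume estimates \eqref{polydiscvolume}, \eqref{Kobayashivolume} and \eqref{kernel-volume}. The inclusion $E(a,r_\sharp)\subset\phi_a(\vartheta\D^n)$ will come from Lemma \ref{polydiscchain}, combined with the scaling in Lemma \ref{polydiscembed}(i).

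\textbf{Case $a\in U$.} Fix a small $c\in(0,1)$ and set $\varepsilon=c\,\delta(a)$. Let $\{v^{(k)}\}$ be the $\varepsilon$-extremal basis at $a$, and define
\[
\mathscr{A}_a\zeta=\sum_{k}\zeta_k\,\tau_k(a,\varepsilon)\,v^{(k)},\qquad \phi_a(\zeta)=a+\mathscr{A}_a\zeta ,
\]
so that $\phi_a(\D^n)=Q(a,\varepsilon)$.

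\emph{Step 1: $\phi_a(\D^n)\subset\Omega$.} Since $\tau_1(a,\varepsilon)\approx\varepsilon$ in the normal direction, and $\tau_k(a,\varepsilon)$ is the distance along $v^{(k)}$ to the level set $\{\varrho=\varrho(a)+\varepsilon\}$, convexity of the sublevel sets $\Omega_{\varepsilon'}$ places the polydisc $Q(a,\varepsilon)$ inside $\{\varrho<\varrho(a)+C\varepsilon\}$. Choosing $c$ small so that $C c\,\delta(a)<|\varrho(a)|$ gives the inclusion, since $|\varrho|\approx\delta$.

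\emph{Step 2: the determinant.} Because the basis is orthonormal,
\[
|\det\phi_a|^2=\prod_k\tau_k(a,\varepsilon)^2\asymp\varepsilon^2\tau(a,\varepsilon)^2\asymp|Q(a,\varepsilon)|
\]
by \eqref{polydiscvolume}. Lemma \ref{polydiscembed}(i) gives $\tau_k(a,c\delta)\asymp_c\tau_k(a,\delta)$. Hence $|Q(a,\varepsilon)|\asymp|Q(a,\delta(a))|\asymp K(a,a)^{-1}$ by \eqref{kernel-volume}, with constants depending only on $c$ and $\Omega$.

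\emph{Step 3: the inclusion of the Kobayashi ball.} Fix $\vartheta=1/2$. The set $\phi_a(\vartheta\D^n)=\{|w_k|<\tfrac12\tau_k(a,\varepsilon)\}$ contains $Q(a,\eta\varepsilon)$ whenever $\eta^{1/M_\Omega}C\le\tfrac12$. Indeed, by Lemma \ref{polydiscembed}(i), $Q(a,\eta\varepsilon)\subset\eta^{1/M_\Omega}Q(a,C\varepsilon)$, and the rescaled $Q(a,C\varepsilon)$ is again compared to $Q(a,\varepsilon)$ by the same lemma. By Lemma \ref{polydiscchain}, $E(a,r)\subset Q\bigl(a,\tfrac{2r}{1-r}\delta(a)\bigr)$, and this lies in $Q(a,\eta c\,\delta(a))$ as soon as $\tfrac{2r}{1-r}\le\eta c$. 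We therefore choose $r_\sharp$ with $\tfrac{2r_\sharp}{1-r_\sharp}=\eta c$. Monotonicity of $E(a,r)$ in $r$ then covers all $r\le r_\sharp$ with the same constants.

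\textbf{Case $a\notin U$.} Here $\delta(a)\ge c'>0$, so $\Omega\setminus U$ is a compact set, and we take $\phi_a(\zeta)=a+s\zeta$ with $s=c'/(2\sqrt n)$, a scaled unit polydisc. Then $|\det\phi_a|^2=s^{2n}$, while $K(a,a)$ is bounded above and below on this compact set. The Kobayashi ball $E(a,r)$ is Euclidean-small uniformly, since the Kobayashi metric dominates a multiple of the Euclidean metric on a bounded domain. Shrinking $r_\sharp$ if necessary gives $E(a,r_\sharp)\subset a+\vartheta s\D^n$.

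\textbf{Main obstacle.} The delicate point is Step 3 together with the consistency issue it hides. The coordinates $w^{a,\varepsilon}$ depend on the scale, so comparing $Q(a,\eta\varepsilon)$ with $\vartheta\,Q(a,\varepsilon)$ requires using Lemma \ref{polydiscembed}(i) carefully, in a form where the dilation is taken in the fixed $\varepsilon$-basis. If this is problematic, the fallback is to use Lemma \ref{polydiscembed}(ii)--(iii) to show that polydiscs at comparable scales are mutually contained up to constants. Step 1 also depends on the standard McNeal fact that $Q(a,\varepsilon)$ sits inside a fixed dilate of the level set, which I will cite from \cite{McNeal1994,Bonami2001}.
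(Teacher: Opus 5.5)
Your proposal is correct and is essentially the paper's argument. Both take $\phi_a(\D^n)$ to be an extremal polydisc at scale comparable to $\delta(a)$, read off $|\det\phi_a|^2$ as its volume and compare it with $K(a,a)^{-1}$ through \eqref{polydiscvolume} and \eqref{kernel-volume}, get the ball inclusion from Lemma \ref{polydiscchain} plus Lemma \ref{polydiscembed}(i), and use a fixed Euclidean polydisc on the compact part away from $\partial\Omega$. The differences are minor:
\begin{itemize}
\item The paper gets $\phi_a(\D^n)\subset\Omega$ directly from the left inclusion $Q(a,\frac{\rho_1}{n}\delta(a))\subset E(a,\rho_1)$ of Lemma \ref{polydiscchain}, with no need for McNeal's level-set containment.
\item The paper compares $E(a,r)$ with $Q(a,\eta s_a)$, $s_a=e_a/C$, so that $Q(a,Cs_a)=Q_a$ exactly. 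The coordinate issue you flag is harmless anyway, since dilation about $a$ does not depend on the basis.
\item In Step~2, compare volumes via the set inclusions of Lemma \ref{polydiscembed}(i) rather than the individual $\tau_k$, which are taken in different extremal bases.
\end{itemize}
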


\begin{proof}
Let \[
    \{N_{\delta_0}=\{z\in\Omega:\delta(z)<\delta_0\}
\]
and  \(0<\rho_1<\rho_0<1\). For fixed \(a\) lie in a tubular
 neighborhood of \(\partial{\Omega}\). Let \(Q_a=Q(a,\frac{\rho_1}{n}\delta(a)\), 
 then by Lemma \ref{polydiscchain}, 
\[
    Q_a\subset E(a,\rho_1),
\]
Since \(\rho_1<\rho_0\) and the Kobayashi metric is complete, by Hopf-Rainow theorem, 
  \[
     Q_a\subset\overline{ E(a,\rho_1)}\subset E(a,\rho_0)\Subset \Omega. 
  \]                                                     
 Set \(e_a=\frac{\rho_1}{n}\delta(a)\), \(s_a=\frac{e_a}{C}=\frac{\rho_1}{nC}\delta(a)\), 
 where \(C\) is                                                                                                                                                                                                                                                                                                                                                                                                                                                                                                                                                                                                                        the constant in Lemma \ref{polydiscembed}. 
  Set \(\eta(r)=\frac{\frac{2r}{1-r}\delta(a)}{s_a}=\frac{2nCr}{\rho_1 (1-r)}\), 
  then by Lemma \ref{polydiscchain} and Lemma \ref{polydiscembed}, we get 
\begin{equation}\label{Kobayashi-polydiscembed}
    E(a,r)\subset Q\left(a,\frac{2r}{1-r}\delta(a)\right)
 =Q(a,\eta s_a)\subset\eta(r)^\frac{1}{M_\Omega} Q(a,Cs_a)
 =\eta(r)^\frac{1}{M_\Omega} Q_a. 
\end{equation}
By the definition of \(\eta(r)\), then \(\eta(r)\to 0\) as \( r\to 0\).
 For given \(0<\vartheta<1\),  we can choose 
\[r^1_\sharp \leq \frac{\rho_1\vartheta^{M_\Omega}}{2nC+\rho_1\vartheta^{M_\Omega}}\]
 such that \(\eta(r^1_\sharp)^\frac{1}{M_\Omega}\leq \vartheta\), hence
  \eqref{Kobayashi-polydiscembed} yields 
  \begin{equation}\label{uniformembed}
      E(a,r)\subset \vartheta Q_a\subset Q_a\Subset \Omega.
  \end{equation}
 
Let \(\{v^{(1)}(a), \dots, v^{(n)}(a)\}\) be the extremal orthonormal basis 
of \(Q_a=Q(a, e_a)\), the corresponding anisotropic radius is           \[\tau_1(a,e_a),\dots, \tau_n(a,e_a).\]                                  
Let \(\{e_j\}_{j=1}^n\) be the standard basis of \(\mathbb{C}^n\),      define linear mapping \(\mathscr{A}_a\) as
 \[\mathscr{A}_a e_j=\tau_j(a,e_a)v^{(j)}(a),\,\,j=1,\dots, n.\]        
 For \(\zeta=(\zeta_1,\dots,\zeta_n)\), we denote 
 \[
 \phi_a(\zeta) =a+\mathscr{A}_a \zeta .  
 \]
By the definition of \(Q_a\) and \(\D^n\), then \(\phi_a(\D^n)=Q_a\) and \(\phi_a(\vartheta\D^n)=\vartheta Q_a\).
 Hence  \eqref{uniformembed} yields that 
 \[
     E(a,r^1_\sharp)\subset \phi_a(\D^n)\subset \phi_a(\vartheta \D^n)
 \subset \Omega.
 \]
On the one hand,  since \(\{v^{(1)}(a), \dots, v^{(n)}(a)\) is orthonormal, hence 
\[|\det\phi_a|^2=\prod\limits_{j=1}^n\tau_j(a,e_a)^2=\delta(a)^2 \prod\limits_{j=2}^n\tau_j(a,e_a)^2=\delta(a)^2\tau(a,e_a)^2.\]
On the other hand, since \(e_a=\frac{\rho_1}{n}\delta(a)\), then  
\[|Q_a|\asymp \delta(a)^2\tau(a,\delta(a))^2\asymp\frac{1}{K(a,a)},\]
from \eqref{polydiscvolume} and \eqref{kernel-volume}, 
which implies 
\[
    |\det\phi_a|^2\asymp \frac{1}{K(a,a)}.
\]

For \(a\in\Omega\) far away from \(\partial{\Omega}\),  we can select  compact set \(K=\overline{\Omega\setminus N_{\delta}}\), let \(d_0=\dist(K,\partial{\Omega})>0\), choosing that \(t_0>0\) such that \(a+t_0\D^n\Subset \Omega\) holds for all \(a\in K\), define \(Q_a=a+t_0\D^n\). Since the Euclidean metric is equivalent to Kobayashi metric on \(K\), hence 
\[E(a,r^2_\sharp)\subset a+\vartheta t_0\D^n=\vartheta Q_a\,\,\text{whenever \(r^2_\sharp\) is enough small.} \] 
The fact that \(K(a,a)\asymp 1\asymp |Q_a|, \,\,a\in K\), then the conclusion is obtained if we set \(r_\sharp=\min\{r^1_\sharp, r^2_\sharp\}\).
\end{proof}

Let \(H\) be a Hilbert space, For any non-negative integer \(n\), by \cite[Theorem 1.34]{Zhu2007}, the \({(n+1)}\)-th singular value of \(T:H\to  H\) is defined by 
\begin{equation}\label{singularvalue}
s_{n+1}(T)=\inf\{\|T-F\|: \text{Rank} (F)\leq n\},  
\end{equation}
  where \(\|\cdot\|\) denotes the operator norm. And it is obvious that 
  \[
      \|T\|=s_1(T)\geq s_2(T)\geq\dots\geq 0.
  \]

 By \eqref{kernel-volume}, 
 \begin{equation}\label{average-area}
    \widehat{\mu}_r(z)=:\frac{\mu(E(z,r))}{|E(z,r)|}\asymp\mu(E(z,r))K(z,z)=:\mathcal{A}_r\mu(z).
 \end{equation}
By Lemma \ref{det-comparison}, we now derive a uniform estimates of singular-value  restriction to a small Kobayashi ball. The argument relies on finite-rank Taylor approximation in normalized coordinates. Through the identity \(T_\nu=J_\nu^*J_\nu\), this estimate yields a local Schatten class bound in terms of the normalized mass \(\nu(E(a,r))K(a,a)\).

The proof of sufficiency proceeds by localization and uniform approximation. We first partition \(\Omega\) into disjoint \(D_j\subset E(a_j,r)\) and set \(\mu_j=\mu|{D_j}\). The polydisc chain allow finite-rank Taylor approximation of the local restriction operators, with constants independent of the lattice index. The resulting singular-value estimates yield \(|T{\mu_j}|{S_p}^{p}\lesssim\left(\mu(D_j)K(a_j,a_j)\right)^{p}\). For\(0<p<1\), positivity and triangle inequality  allow us to sum these estimates: the lattice summability condition ensures that \(\sum_jT{\mu_j}\) converges in \(S_p\). Finally, the \(D_i\) decomposition identifies the quadratic form of the limit with integration against \(\mu\), proving that the limit is \(T_\mu\) and establishing the desired Schatten bound.

\begin{lemma}\label{singularvaluedecay}
Let $0<r\leq r_\sharp$,  $a\in\Omega$, and let $\nu$ be a finite
positive measure supported in $E(a,r)$.  For
\[
 J_\nu:A^2(\Omega)\longrightarrow L^2(\Omega, d\nu),\qquad J_\nu f=f,
\]
there are $C,c>0$, independent of $a$ and $\nu$, such that
\begin{equation*}
 s_k(J_\nu)\leq
 C\widehat{\nu}_r(a)^\frac{1}{2}e^{-ck^{1/n}},\qquad k\geq1.
\end{equation*}
Consequently, for every $p>0$,
\begin{equation*}\label{eq:local-Toeplitz}
 \|T_\nu\|_{S_p}^p
 \leq C_{p,r}\widehat{\nu}_r(a)^p.
\end{equation*}
\end{lemma}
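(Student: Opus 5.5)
The plan is to pull back to the unit polydisc with the affine map $\phi_a$ of Lemma \ref{det-comparison}, where Taylor polynomials give explicit finite-rank approximants. For $f\in A^2(\Omega)$ set $g=f\circ\phi_a$, which is holomorphic on $\D^n$. By the change of variables $\int_{\D^n}|g|^2\,dV=|\det\phi_a|^{-2}\int_{\phi_a(\D^n)}|f|^2\,dV$ and the bound $|\det\phi_a|^2\asymp K(a,a)^{-1}$, we get
\[
\|g\|_{A^2(\D^n)}^2\lesssim K(a,a)\|f\|_{A^2(\Omega)}^2 .
\]

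Now fix $m\ge 0$ and let $P_m g$ be the Taylor polynomial of $g$ at $0$ of total degree $< m$. Define the operator $F_m f=(P_m g)\circ\phi_a^{-1}$, viewed as an element of $L^2(\Omega,d\nu)$. Its rank is at most $\binom{m+n-1}{n}\asymp m^n$. The key estimate is the tail bound on the smaller polydisc $\vartheta\D^n$. Expand $g=\sum_\alpha c_\alpha\zeta^\alpha$ and use Cauchy estimates on a polydisc of radius $\vartheta'\in(\vartheta,1)$, via subharmonicity and the $L^2$ norm. This gives $|c_\alpha|\lesssim \vartheta'^{-|\alpha|}\|g\|_{A^2(\D^n)}$. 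Summing over $|\alpha|\ge m$ then yields
\[
\sup_{\vartheta\D^n}|g-P_mg|\lesssim \sum_{k\ge m}k^{n-1}(\vartheta/\vartheta')^{k}\|g\|_{A^2(\D^n)}\lesssim e^{-c' m}\|g\|_{A^2(\D^n)} .
\]
Since $\supp\nu\subset E(a,r)\subset\phi_a(\vartheta\D^n)$, it follows that
\[
\|(J_\nu-F_m)f\|_{L^2(\nu)}^2\le \nu(E(a,r))\sup_{\vartheta\D^n}|g-P_mg|^2\lesssim e^{-2c'm}\,\nu(E(a,r))K(a,a)\,\|f\|^2 .
\]
By \eqref{average-area}, $\nu(E(a,r))K(a,a)\asymp\widehat\nu_r(a)$, with constants uniform in $a$.

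Next apply \eqref{singularvalue}, which holds equally for operators between different Hilbert spaces. This gives $s_{N_m+1}(J_\nu)\le C\widehat\nu_r(a)^{1/2}e^{-c'm}$ with $N_m\asymp m^n$. Given $k$, choose $m\asymp k^{1/n}$ with $N_m<k$; monotonicity of singular values then gives $s_k(J_\nu)\le C\widehat\nu_r(a)^{1/2}e^{-ck^{1/n}}$. For small $k$ the case $m=0$ suffices, since then $F_0=0$ and the bound is just $\|J_\nu\|$.

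For the consequence, check that $T_\nu=J_\nu^*J_\nu$: indeed $\langle T_\nu f,h\rangle=\int f\bar h\,d\nu$ by the reproducing property and Fubini. So fact (v) of Section \ref{sec-Schatten} gives $s_k(T_\nu)=s_k(J_\nu)^2$. Hence
\[
\|T_\nu\|_{S_p}^p\le C^{2p}\widehat\nu_r(a)^p\sum_k e^{-2pck^{1/n}}=C_{p,r}\widehat\nu_r(a)^p ,
\]
and the series converges for every $p>0$.

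The main obstacle is uniformity in $a$. The ratio $\vartheta/\vartheta'$, the Cauchy constants, and the comparison $|\det\phi_a|^2\asymp K(a,a)^{-1}$ must all be independent of $a$. This holds because Lemma \ref{det-comparison} provides the fixed $\vartheta$ and fixed $c_0,C_0$. The only other point to check is that $\phi_a(\D^n)\subset\Omega$, so that the change of variables is bounded by the full $A^2(\Omega)$ norm; Lemma \ref{det-comparison} also gives this.
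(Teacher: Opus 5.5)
Your proof is correct and follows the paper's argument: you pull back by $\phi_a$, truncate the Taylor series at total degree $m$, bound the tail uniformly on $\vartheta\D^n$, read off $s_{N_m+1}(J_\nu)$ from the approximation-number formula, and conclude with $T_\nu=J_\nu^*J_\nu$. The only cosmetic difference is that you bound the coefficients by Cauchy estimates on $\vartheta'\D^n$, whereas the paper uses orthogonality of monomials in $A^2(\D^n)$ plus Cauchy--Schwarz; both give geometric decay of the tail with constants independent of $a$.
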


\begin{proof}
    Let \(\phi_a\) be given in Lemma \ref{det-comparison} and \(F=f\circ \phi_a\), then by the change of variables 
  \begin{equation}\label{F-norm}
 \|F\|_{A^2(\D^n)}
 \leq |\det A_a|^{-1}\|f\|_{A^2(\Omega)}
 \lesssim K(a,a)^{1/2}\|f\|_{A^2(\Omega)}. 
\end{equation}
For multi-index \(\alpha=(\alpha_1,\dots,\alpha_n)\), write \(F(\zeta)=\sum_\alpha a_\alpha\zeta^\alpha\),  then 
\[\|F\|_{A^2(\D^n)}^2=\sum_{\alpha}\frac{|a_\alpha|^2}{\prod_{j=1}^n (1+\alpha_j)}.\]
Set \(\mathcal{C}_m f=\sum_{|\alpha|<m}c_\alpha\zeta^\alpha\), then by Cauchy-Schwarz inequality
\begin{align}\label{decay}
\begin{aligned}
    |F-\mathcal{C}_m F|&\leq \sum_{|\alpha|\geq m} a_\alpha \vartheta^{|\alpha|}\\
    &=\sum_{|\alpha|\geq m}\frac{|a_\alpha|}{\sqrt{\prod_{j=1}^n (1+\alpha_j)}}\sqrt{\prod_{j=1}^n (1+\alpha_j)} \vartheta^{2|\alpha|}\\
    &\leq \left(\sum_{|\alpha|\geq m}\frac{|a_\alpha|^2}{{\prod_{j=1}^n (1+\alpha_j)}}\right)^\frac{1}{2}\left(\sum_{|\alpha|\geq m}\prod_{j=1}^n (1+\alpha_j) \vartheta^{2|\alpha|}\right)^\frac{1}{2}\\
    &\lesssim \|F\|_{A^2(\D^n)}\left(\sum_{|\alpha|\geq m}\prod_{j=1}^n (1+\alpha_j) \vartheta^{2|\alpha|}\right)^\frac{1}{2}.
    \end{aligned}
\end{align} 
Write \[S_m =\sum_{|\alpha|\geq m}\prod_{j=1}^n (1+\alpha_j) \vartheta^{2|\alpha|}=\sum_{q=m}^\infty \vartheta^{2q}\sum_{|\alpha|=q}\prod_{j=1}^n(1+\alpha_j).\]
\begin{align*}
    F(t)&=\sum_{\alpha}\prod_{j=1}^n (1+\alpha_j)t^{|\alpha|}\\
    &=\sum_{\alpha_1=0}^\infty\cdots \sum_{\alpha_n=0}^\infty\prod_{j=1}^n(1+\alpha_j)t^{\alpha_j}\\
    &=\prod_{j=1}^n\left(\sum_{k=0}^\infty (1+k)t^k\right)\\
    &=\frac{1}{(1-t)^{2n}}.\\
\end{align*}
Since \(\frac{1}{(1-t)^{2n}}=\sum_{q=0}^{\infty} \binom{q+2n-1}{2n-1} t^q\), Comparing the coefficients of \(t^q\) we obtain that 
\[
\sum_{|\alpha|=q} \prod_{j=1}^n (\alpha_j + 1) = \binom{q+2n-1}{2n-1}.
\]
Consequently, 
\[S_m\lesssim \sum_{q=m}^\infty (q+1)^{2n-1}\vartheta^{2q}.\]
Choosing that \(0<\vartheta<\vartheta_1<1\), \(\sup_{q}(q+1)^{2n-1}\left(\frac{\vartheta}{\vartheta_1}\right)^{2q}<\infty\), hence 
\begin{equation}\label{remainderestimate}
     S_m\lesssim\vartheta_1^{2m}.
\end{equation}

Define \[R_m(f)(z)=\begin{cases}
    (\mathcal{C}_m(f\circ \phi_a))\circ \phi_a^{-1}(z),\,\,&z\in\supp\nu\\
    0,\,\,&\text{others}.
\end{cases}\]
The number of monomials of
total degree below $m$ is
\begin{equation}\label{eq:rank-count}
 N_m = \#\{\alpha \in \mathbb{N}^n : |\alpha| < m\} = \binom{m + n }{n} \asymp m^n,
\end{equation}
hence, \(\text{Rank} R_m\leq N_m\).
By Lemma \ref{det-comparison}, \(\supp\nu\subset E(a,r)\subset \phi(\vartheta\D^n)\), \eqref{F-norm}, \eqref{decay} and \eqref{remainderestimate}
\begin{align*}
    \|(J_\nu-R_m)f\|_{L^(\nu)}&\leq \nu(E(a,r))^\frac{1}{2}\sup_{\vartheta\D^n}|F-\mathcal{C}_mF| \\
    &\lesssim \widehat{\nu}_r(a)^\frac{1}{2}\vartheta_1^m\|f\|_{A^2(\Omega)},
\end{align*}
hence \(\|J_\nu-R_m\|\lesssim \widehat{\nu}_r(a)^\frac{1}{2}\vartheta_1^m\), by \eqref{singularvalue},
 \[
     s_{N_m+1}(J_v)\lesssim\widehat{\nu}_r(a)^\frac{1}{2}\vartheta_1^m\lesssim\widehat{\nu}_r(a)^\frac{1}{2}e^{-cN_m^\frac{1}{n}}.
 \]
Choosing \(k\) such that \(N_m<k\), then 
\[s_k(J_\nu)\lesssim\widehat{\nu}_r(a)^\frac{1}{2}e^{-ck^\frac{1}{n}}.\]
Since \(T_\nu=J_\nu^*J_\nu\), hence
\begin{align*}
\|T_\nu\|_{S^p}^p&=\sum_{k=1}^\infty s_k(T_\nu)^p\\
&=\sum_{k=1}^\infty s_k(J_\nu)^{2p}\\
&\lesssim\widehat{\nu}_r(a)^p\sum_{k=1}^\infty e^{-2ck^\frac{1}{n}}\\
&\lesssim \widehat{\nu}_r(a)^p.
\end{align*}
\end{proof}

After enumerating $\{a_j\}$, define 
\begin{equation}\label{eq:borel-cells}
 D_1=E(a_1,r),\qquad
 D_j=E(a_j,r)\setminus\bigcup_{k<j}E(a_k,r),\quad j\geq2.
\end{equation}
Then
\begin{equation}\label{eq:cells-properties}
 \Omega=\bigsqcup_jD_j,
 \qquad D_j\subset E(a_j,r).
\end{equation}

\begin{proposition}[Sufficiency]\label{prop:sufficiency}
Let $0<p\leq1$ and let $\{a_j\}$ be a  Kobayashi
$r$-lattice.  If
\begin{equation}\label{eq:suff-lattice}
 \sum_j\widehat{\mu}_r(a_j)^p<\infty,
\end{equation}
then $T_\mu\in S_p(A^2(\Omega))$ and
\begin{equation}\label{eq:suff-estimate}
 \|T_\mu\|_{S_p}^p
 \lesssim\sum_j\widehat{\mu}_r(a_j)^p.
\end{equation}
\end{proposition}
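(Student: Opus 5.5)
The plan is to decompose $\mu$ along the Borel cells \eqref{eq:borel-cells}, bound each localized Toeplitz operator by Lemma \ref{singularvaluedecay}, and sum using the $p$-triangle inequality valid for $0<p\leq 1$. I assume $r\leq r_\sharp$, which is permitted by taking $r_*\leq r_\sharp$.

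\emph{Step 1: local bounds.} Set $\mu_j=\mu|_{D_j}$. Each $\mu_j$ is a finite positive measure supported in $E(a_j,r)$, because $D_j\subset E(a_j,r)$ by \eqref{eq:cells-properties}. Lemma \ref{singularvaluedecay} applied with $\nu=\mu_j$ and $a=a_j$ gives
\[
\|T_{\mu_j}\|_{S_p}^p\leq C_{p,r}\widehat{(\mu_j)}_r(a_j)^p\leq C_{p,r}\widehat{\mu}_r(a_j)^p,
\]
with the constant independent of $j$. For finite partial sums, fact (ii) of Section \ref{sec-Schatten} with the constant $C_p=1$ holds when $0<p\leq1$: the inequality $\|S+T\|_{S_p}^p\leq\|S\|_{S_p}^p+\|T\|_{S_p}^p$ is Rotfel'd's inequality for positive operators, and $T_{\mu_j}\geq0$. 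Fact (iii) gives the same bound directly. Hence
\[
\Bigl\|\sum_{j\in F}T_{\mu_j}\Bigr\|_{S_p}^p\leq C_{p,r}\sum_{j\in F}\widehat{\mu}_r(a_j)^p
\]
for every finite set $F$. Under \eqref{eq:suff-lattice}, the partial sums $S_N=\sum_{j\leq N}T_{\mu_j}$ therefore form a Cauchy sequence in the complete quasi-Banach space $S_p$ (fact (i)). They converge in $S_p$, and hence in operator norm, to some $S$ satisfying $\|S\|_{S_p}^p\leq C_{p,r}\sum_j\widehat{\mu}_r(a_j)^p$. A cruder constant from fact (ii) would also suffice, provided one applies the $p$-triangle inequality term by term; for positive sums, however, Rotfel'd gives the bound directly.

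\emph{Step 2: identify the limit.} I then show $S=T_\mu$. For $f\in A^2(\Omega)$ we have $\langle S_Nf,f\rangle=\sum_{j\leq N}\int_{D_j}|f|^2\,d\mu$. Since the $D_j$ are disjoint and cover $\Omega$, monotone convergence gives
\[
\langle S_Nf,f\rangle\uparrow\int_\Omega|f|^2\,d\mu .
\]
The left side converges to $\langle Sf,f\rangle$, so $\int_\Omega|f|^2\,d\mu\leq\|S\|\|f\|^2<\infty$, and $\mu$ is a Carleson measure. The quadratic form $f\mapsto\int|f|^2d\mu$ then defines a bounded positive operator. Using Fubini together with the reproducing property, $\langle T_\mu f,g\rangle=\int f\bar g\,d\mu$, and this operator is exactly $T_\mu$. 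By polarization, $T_\mu=S$.

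The step needing most care is Step 2. The integral defining $T_\mu f(z)$ must make sense a priori, and the Fubini exchange must be justified. For the integral, I use $|K(z,\cdot)|\leq K(z,z)^{1/2}K(\cdot,\cdot)^{1/2}$ together with the Carleson bound just obtained and Cauchy–Schwarz. Once both points are settled, \eqref{eq:suff-estimate} follows.
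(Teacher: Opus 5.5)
Your proposal is correct and is essentially the paper's proof. It uses the same cell decomposition $\mu=\sum_j\mu|_{D_j}$, applies Lemma \ref{singularvaluedecay} to each piece, uses $p$-subadditivity for positive sums to make the partial sums Cauchy in $S_p$, and identifies the limit with $T_\mu$ through the quadratic forms $\langle S_Lf,f\rangle$, monotone convergence, and polarization. The differences are cosmetic: you obtain the final bound from $S_p$-continuity rather than from Fatou's lemma on singular values, and you state explicitly the restriction $r\le r_\sharp$ and the well-definedness of $T_\mu$, which the paper leaves implicit. For the latter, the cleanest route is to apply the Carleson bound to $K_z$ itself, or to note that $\int_\Omega K(w,w)\,d\mu(w)<\infty$ already follows from the hypothesis because $\ell^p\subset\ell^1$.
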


\begin{proof}
Use the partition \eqref{eq:borel-cells}, set
$\mu_j=\mu|_{D_j}$ and $d_j=\mu(D_j)K(a_j,a_j)$. Since \(D_j\subset E(a_j,r)\),  Lemma
\ref{singularvaluedecay} gives
\begin{equation}\label{eq:piece-estimate}
 \|T_{\mu_j}\|_{S_p}^p\lesssim d_j^p
 \leq\widehat{\mu}_r(a_j)^p.
\end{equation}
For $L\geq1$, put $S_L=\sum_{j=1}^LT_{\mu_j}$.  The summands are
positive, hence
\[
 \|S_L\|_{S_p}^p\leq\sum_{j=1}^L\|T_{\mu_j}\|_{S_p}^p.
\]
Similarly,  by \eqref{eq:suff-lattice}
\begin{align*}
\|S_M-S_L\|_{S_p}^p&\leq \sum_{j=L+1}^M\|T_{\mu_j}\|_{S_p}^p\\
&\lesssim\sum_{j=L+1}^M \widehat{\mu}_{r}(a_j)^p\\
&<\varepsilon.
\end{align*} 
This show that $(S_L)$ is Cauchy in
$S_p$, let $S$ be its limit operator. For compact operators \(R\), \(\|R\|=s_1(R)\) and \(s_1(R)^p\leq \sum_{k}s_k(R)^p=\|R\|_{S_p}^p\), hence  $S_p$ convergence implies
operator-norm convergence, it shows that for \(f\in A^2(\Omega)\),
$$\langle S_Lf, f\rangle\to \langle Sf, f\rangle.$$ Since \(S_L\geq 0\), hence \(S\geq 0\).

By the definition of \(S_L\), 
\begin{align}\label{eqSL}
    \begin{aligned}
        \langle S_Lf, f\rangle&=\sum_{j=1}^L \langle T_{\mu_j}f,f\rangle\\
        &=\sum_{j=1}^L\int_{D_j}|f(w)|^2d\mu(w).
    \end{aligned}
\end{align}
Let \(U_L=\bigcup _{j=1}^LD_j\), since \(D_j\) are disjoint, 
\begin{equation}\label{disjoint}
    \sum_{j=1}^L\int_{D_j}|f|^2d\mu=\int_{U_L}|f|^2d\mu 
\end{equation}
and \(U_1\subset U_2\subset\cdots\), \(\bigcup _{L=1}^\infty U_L=\Omega\), hence 
\(\chi_{U_L}|f|^2\) increasing to \(|f|^2\), 
monotone convergence theorems gives, for every
$f\in A^2(\Omega)$, 
\begin{equation}\label{monotone}
    \lim_{L\to\infty}\int_{U_L} |f|^2d\mu =\int_{\Omega} |f|^2d\mu .
\end{equation}
Hence,
\begin{equation}\label{operatorconvergence}
    \lim_{L\to\infty}\langle S_Lf, f\rangle=\langle Sf, f\rangle=\int_\Omega |f|^2d\mu
\end{equation}
applying polarization identifies to \eqref{operatorconvergence} yields $S=T_\mu$, hence \(T_\mu\in S_p\).

Since \(\|S_L\|_{S_p}^p\lesssim\sum_{j=1}^L \widehat{\mu}_r(a_j)^p\) and \(s_k(S_L)\to s_k(S)\), Fatou lemma yields 
    \begin{align*}
\|S\|_{S_p}^p &= \sum_{k=1}^\infty s_k(S)^p \\
&\leq \liminf_{L \to \infty} \sum_{k=1}^\infty s_k(S_L)^p \\
&= \liminf_{L \to \infty} \|S_L\|_{S_p}^p \\
&\lesssim \sum_{j=1}^\infty \widehat{\mu}_r(a_j)^p.
\end{align*}
Since \(T_\mu=S\), hence
\[\|T_\mu\|_{S_p}^p
 \lesssim\sum_j\widehat{\mu}_r(a_j)^p.\]
\end{proof}

\subsubsection{\textbf{Necessity}}

In this section, we prove that \(T_\mu\in S_p(A^2(\Omega))\) implies the lattice summability condition for  \(0<p<1\). Our approach is  first consider the diagonal estimates of Toeplitz operators on the disjoint \(D_i\). The main obstacle is to control the off-diagonal contributions so that these diagonal lower bounds yield an estimate in terms of \(\|T_\mu\|_{S_p}^{p}\).

We begin by establishing uniform summability estimates for  suitably atomic of Bergman kernel. These estimates control the interactions between kernel functions on each \(D_i\), while the Lemma \ref{atomic} allows us to form localized matrices whose Schatten quasi-norms are controlled by that of \(T_\mu\). The near-diagonal lower bound for the kernel then ensures that each relevant diagonal entry dominates the corresponding normalized \(D_i\).

To control the off-diagonal terms, we introduce a random event of a finite set of lattice indices, assigning each \(D_i\) and its associated kernel function the same event. An off-diagonal contribution is retained only when the index of \(D_i\) and its two kernel indices all receive the same event. Since at least two of these indices are distinct, this event has probability at most \(\frac{1}{M}\), where \(M\) denotes the number of event. Combined with the coefficient summability estimates, this produces a factor \(\frac{1}{M}\) in the expected off-diagonal error. Selecting a event that satisfies this bound and fixing \(M\) sufficiently large.

All matrix estimates are carried out on finite index sets, with constants independent of the truncation. Passing to the limit therefore gives the required summability of the normalized \(D_i\) masses. Finally, the lattice  property and local comparability of the diagonal kernel values transfer this estimate from the disjoint \(D_i\) to the Kobayashi balls, completing the proof of necessity.

\quad\,Fix $0<p<1$ and assume $T_\mu\in S_p(A^2(\Omega))$.  Let
$\{a_j\}$ be a Kobayashi lattice, put
$K_j=K(a_j,a_j)$, and choose an integer $N$ such that
\begin{equation}\label{eq:N-condition}
 p\left(N-\frac12\right)>\frac n{n+1}.
\end{equation}
Define the high-order kernel atoms
\begin{equation*}
 h_j(z)=K_j^{1/2-N}K(z,a_j)^N,
 \qquad
 x_j(z)=K(z,z)^{-1/2}|h_j(z)|.
\end{equation*}

\begin{lemma}
\label{lem:atom-sum}
With $N$ as in \eqref{eq:N-condition},
\begin{equation}\label{eq:atom-sum}
 \sup_{z\in\Omega}\sum_jx_j(z)^p<\infty.
\end{equation}
Moreover, for each fixed $r<1$, if
\begin{equation}\label{eq:yij}
 y_{ij}=K_i^{-1/2}\sup_{z\in E(a_i,r)}|h_j(z)|,
\end{equation}
then
\begin{equation}\label{eq:yij-sum}
 \sup_i\sum_jy_{ij}^p<\infty.
\end{equation}
\end{lemma}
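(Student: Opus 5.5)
The plan is to discretize the lattice sum into an integral by the sub-mean-value property, and then apply the Forelli--Rudin estimate of Lemma \ref{Forelli-Rudin}. The quantity $y_{ij}$ will then be reduced to the first estimate.

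For \eqref{eq:atom-sum}, fix $z\in\Omega$ and a small auxiliary radius $r\in(0,1)$. We write
\[
x_j(z)^p=K_j^{p(\frac12-N)}\,|K(z,a_j)|^{Np}\,K(z,z)^{-p/2}.
\]
The function $w\mapsto |K(z,w)|^{Np}=|\overline{K(z,w)}|^{Np}$ is a power of the modulus of a holomorphic function, so it is continuous, non-negative and plurisubharmonic. The first inequality of Lemma \ref{subharmonic}, centered at $a_j$, therefore gives
\[
|K(z,a_j)|^{Np}\lesssim \frac{1}{|E(a_j,r)|}\int_{E(a_j,r)}|K(z,w)|^{Np}\,dV(w).
\]
On $E(a_j,r)$ we have $K_j\asymp K(w,w)$ by Lemma \ref{lm-diagonalBegmankernel}, and $|E(a_j,r)|^{-1}\asymp K_j$ by \eqref{kernel-volume}. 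Hence
\[
x_j(z)^p\lesssim K(z,z)^{-p/2}\int_{E(a_j,r)}|K(z,w)|^{Np}K(w,w)^{1+\frac p2-Np}\,dV(w).
\]
The balls $E(a_j,r)$ have uniformly bounded overlap by Lemma \ref{Kobayashilattice}. Summing over $j$ then yields
\[
\sum_jx_j(z)^p\lesssim K(z,z)^{-p/2}\int_\Omega |K(w,z)|^{s}K(w,w)^{t}\,dV(w),\qquad s=Np,\quad t=1+\tfrac p2-Np.
\]

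The key step is to check that $(s,t)$ is admissible in Lemma \ref{Forelli-Rudin}, with the roles of the variables swapped; this is legitimate because $|K(z,w)|=|K(w,z)|$.
\begin{itemize}
\item The condition $t<\frac1{n+1}$ is equivalent to $p(N-\frac12)>\frac n{n+1}$, which is exactly \eqref{eq:N-condition}.
\item The condition $t>1-s$ is equivalent to $\frac p2>0$.
\item The condition $s>\frac n{n+1}$ follows from $s=Np>p(N-\frac12)$.
\end{itemize}
The lemma then bounds the integral by $K(z,z)^{s+t-1}=K(z,z)^{p/2}$, which cancels the prefactor and gives a bound independent of $z$. I expect this exponent bookkeeping to be the delicate point. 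The choice \eqref{eq:N-condition} is designed precisely so that $t$ lands in the admissible window. The normalization $K_j^{1/2-N}$ in $h_j$ is what makes the total power of $K(z,z)$ vanish.

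For \eqref{eq:yij-sum}, fix $i$ and let $R=\frac{1+r}2$. The function $|h_j|^p$ is continuous and plurisubharmonic, so the second inequality of Lemma \ref{subharmonic} gives, for every $z\in E(a_i,r)$,
\[
|h_j(z)|^p\lesssim \frac{1}{|E(a_i,r)|}\int_{E(a_i,R)}|h_j(u)|^p\,dV(u).
\]
Since $|E(a_i,r)|^{-1}\asymp K_i$, this gives $y_{ij}^p\lesssim K_i^{1-p/2}\int_{E(a_i,R)}|h_j|^p\,dV$. On $E(a_i,R)$ we have $K_i\asymp K(u,u)$ by Lemma \ref{lm-diagonalBegmankernel}, and $K(u,u)^{1-p/2}|h_j(u)|^p=x_j(u)^p K(u,u)$. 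Consequently
\[
\sum_j y_{ij}^p\lesssim\int_{E(a_i,R)}\Bigl(\sum_jx_j(u)^p\Bigr)d\lambda(u)\lesssim \lambda(E(a_i,R)),
\]
where the last step uses \eqref{eq:atom-sum}. Finally, \eqref{kernel-volume} together with Lemma \ref{lm-diagonalBegmankernel} gives $\lambda(E(a_i,R))\asymp K_i|E(a_i,R)|\asymp1$ uniformly in $i$, which completes the proof.
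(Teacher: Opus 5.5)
Your proof is correct and follows the paper's argument: the sub-mean value inequality for $|K(z,\cdot)|^{Np}$ at each lattice point, summation over $j$, and Lemma~\ref{Forelli-Rudin} with $s=Np$, $t=1+\frac p2-Np$ give \eqref{eq:atom-sum}, and the bound on $y_{ij}$ is then deduced from \eqref{eq:atom-sum} via the submean inequality on $E(a_i,R)$, exactly as you do. The only cosmetic difference is that the paper averages over the pairwise disjoint inner polydiscs $Q_j^-$ instead of the boundedly overlapping balls $E(a_j,r)$; this is immaterial, since both have volume comparable to $K_j^{-1}$.
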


\begin{proof}
Applying Lemma \ref{subharmonic} to pairwise disjoint polydiscs $Q_j^-$.  The polydisc version of the submean
inequality, applied in the $w$ variable to
$|K(z,w)|^{Np}$, gives
\[
 |K(z,a_j)|^{Np}
 \lesssim K_j\int_{Q_j^-}|K(z,w)|^{Np}\,dV(w).
\]
By \eqref{kernel-volume}, Lemma \ref{lm-diagonalBegmankernel} and disjointness,
\begin{align}
 \sum_jK_j^{p(1/2-N)}|K(z,a_j)|^{Np}
 &\lesssim
 \int_\Omega|K(z,w)|^{Np}
 K(w,w)^{1+p/2-Np}\,dV(w).                 \label{eq:atom-integral}
\end{align}
Set
\begin{equation*}
 s=Np,\qquad t=1+\frac p2-Np.
\end{equation*}
The condition $t>1-s$ is simply $p/2>0$, while
$t<1/(n+1)$ is exactly \eqref{eq:N-condition}.  The same condition
also implies $s>n/(n+1)$.  Therefore Lemma \ref{Forelli-Rudin}
 bounds the right-hand side of
\eqref{eq:atom-integral} by
\[
 CK(z,z)^{s+t-1}=CK(z,z)^{p/2}.
\]
Division by $K(z,z)^{p/2}$ proves \eqref{eq:atom-sum}.

Let $R=(1+r)/2$.  Apply Lemma \ref{subharmonic} to $|h_j|^p$:
\[
 \sup_{E(a_i,r)}|h_j|^p
 \lesssim K_i\int_{E(a_i,R)}|h_j(w)|^p\,dV(w).
\]
It follows from \eqref{eq:atom-sum} that
\begin{align*}
 \sum_jy_{ij}^p
 &\lesssim K_i^{1-p/2}
 \int_{E(a_i,R)}\sum_j|h_j(w)|^p\,dV(w)\\
 &\lesssim K_i^{1-p/2}
 \int_{E(a_i,R)}K(w,w)^{p/2}\,dV(w)\lesssim1,
\end{align*}
where the final estimate uses Lemma \ref{lm-diagonalBegmankernel} and
\eqref{kernel-volume}.
\end{proof}

Let $\{D_i\}$ be the partition in \eqref{eq:borel-cells}, and put
\begin{equation*}
 m_i=\mu(D_i),\qquad d_i=m_iK_i.
\end{equation*}
For three lattice indices define
\begin{equation*}
 b_{ijk}=\int_{D_i}h_j(z)\overline{h_k(z)}\,d\mu(z).
\end{equation*}

\begin{lemma}\label{lem:three-index}
Uniformly in $i$,
\begin{equation*}
 \sum_{j,k}|b_{ijk}|^p\lesssim d_i^p.
\end{equation*}
\end{lemma}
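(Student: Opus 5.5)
The plan is to bound each $|b_{ijk}|$ by a product of the quantities $y_{ij}$ from Lemma \ref{lem:atom-sum}, and then to sum in $j$ and $k$ separately. The key point is that $D_i\subset E(a_i,r)$. For $z\in D_i$ we have $|h_j(z)|\le \sup_{E(a_i,r)}|h_j| = K_i^{1/2}y_{ij}$, and likewise for $h_k$. Hence
\[
|b_{ijk}|\le \int_{D_i}|h_j(z)||h_k(z)|\,d\mu(z)\le \mu(D_i)\,K_i\,y_{ij}\,y_{ik}=d_i\,y_{ij}\,y_{ik}.
\]
Raising this to the power $p$ and summing over $j,k$, the double sum factors:
\[
\sum_{j,k}|b_{ijk}|^p\le d_i^p\Bigl(\sum_j y_{ij}^p\Bigr)\Bigl(\sum_k y_{ik}^p\Bigr).
\]
By \eqref{eq:yij-sum}, each factor is bounded uniformly in $i$. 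This gives the claim with a constant depending only on $p$, $r$, $N$ and $\Omega$.

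The only point needing care is that \eqref{eq:yij-sum} is applied with the same radius $r$ as the lattice, so that $D_i\subset E(a_i,r)$ from \eqref{eq:cells-properties}. Lemma \ref{lem:atom-sum} holds for every fixed $r<1$, so this is legitimate.

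The substantive input is entirely in Lemma \ref{lem:atom-sum}, via the Forelli--Rudin estimate (Lemma \ref{Forelli-Rudin}) and the choice of $N$ in \eqref{eq:N-condition}. I expect no real obstacle beyond checking that the constants in \eqref{eq:yij-sum} are independent of $i$. That independence follows from Lemma \ref{lm-diagonalBegmankernel} and \eqref{kernel-volume}, as already noted there.
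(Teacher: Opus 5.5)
Your proposal is correct and matches the paper's proof essentially verbatim. Since $D_i\subset E(a_i,r)$, you bound $|b_{ijk}|\le m_i\sup_{D_i}|h_j|\sup_{D_i}|h_k|\le d_i\,y_{ij}\,y_{ik}$, factor the double sum over $j,k$, and apply \eqref{eq:yij-sum} from Lemma \ref{lem:atom-sum}.
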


\begin{proof}
Since $D_i\subset E(a_i,r)$,
\[
 |b_{ijk}|
 \leq m_i\sup_{D_i}|h_j|\sup_{D_i}|h_k|
 \leq d_i y_{ij}y_{ik}.
\]
Consequently, Lemma~\ref{lem:atom-sum} gives
\[
 \sum_{j,k}|b_{ijk}|^p
 \leq d_i^p\left(\sum_jy_{ij}^p\right)
             \left(\sum_ky_{ik}^p\right)
 \lesssim d_i^p.
\]
\end{proof}

\begin{proposition}[Necessity]\label{prop:necessity}
If $T_\mu\in S_p(A^2(\Omega))$ with $0<p<1$, then for every
 Kobayashi $r$-lattice,
\begin{equation}\label{eq:necessity-estimate}
 \sum_j\widehat{\mu}_r(a_j)^p
 \lesssim\|T_\mu\|_{S_p}^p.
\end{equation}
\end{proposition}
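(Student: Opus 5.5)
The plan follows Luecking/Zhu's strategy for the lower bound, in the random-selection form the paper has already prepared. First reduce to a finite sub-collection. Fix a finite set $F$ of lattice indices. Let $\{e_j\}_{j\in F}$ be an orthonormal set in $A^2(\Omega)$, and define $A:A^2\to A^2$ by $Ae_j=h_j$ for $j\in F$ and $A=0$ on the orthogonal complement. By Lemma \ref{atomic} applied with $p=2$ and the exponent $N$, the operator $A$ is bounded with $\|A\|\lesssim 1$ uniformly in $F$, since $h_j$ is precisely the atom with $1/2-N$. Then $A^*T_\mu A\in S_p$ with $\|A^*T_\mu A\|_{S_p}\le\|A\|^2\|T_\mu\|_{S_p}$. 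Its matrix entries are $\langle T_\mu h_k,h_j\rangle=\int_\Omega h_k\overline{h_j}\,d\mu=\sum_i b_{ikj}$. For any further restriction to a subset $G\subset F$ (compression by the coordinate projection $P_G$), the same bound holds.

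The key inequality is the elementary fact that for $0<p\le1$ and any matrix $(c_{jk})$ with $S_p$ quasi-norm, $\sum_j|c_{jj}|^p\le\|(c_{jk})\|_{S_p}^p$, and conversely $\|(c_{jk})\|_{S_p}^p\le\sum_{j,k}|c_{jk}|^p$. Split the compressed operator $P_GA^*T_\mu AP_G$ into a diagonal part $D_G$ and an off-diagonal part $E_G$. Rather than splitting entries $\langle T_\mu h_k,h_j\rangle$, it is cleaner to split the measure: write $T_\mu=\sum_i T_{\mu_i}$ and keep only pieces with $i\in G$. By positivity, $P_GA^*T_{\mu|_{\cup_{i\in G}D_i}}AP_G\le P_GA^*T_\mu AP_G$, so its $S_p$ quasi-norm is bounded by that of $T_\mu$ (fact (iv)). This operator has entries $c_{jk}=\sum_{i\in G}b_{ijk}$ for $j,k\in G$. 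The diagonal entry is $c_{jj}\ge b_{jjj}=\int_{D_j}|h_j|^2d\mu\gtrsim m_jK_j^{1-2N}K_j^{2N}=d_j$, using the lower kernel bound \eqref{eq-lowerkernelestimates} on $E(a_j,r)$ for $r<r_*$ (plus the trivial interior case). This is where $r_*$ enters. Moreover $c_{jj}$ minus $b_{jjj}$ is nonnegative, so the diagonal part of the matrix $B_G=(b_{jjj}\delta_{jk})$ satisfies $\sum_{j\in G}d_j^p\lesssim\|B_G\|_{S_p}^p$. Then
\[
\|B_G\|_{S_p}^p\le C_p\Bigl(\|T_\mu\|_{S_p}^p+\sum_{i,j,k\in G,\ \#\{i,j,k\}\ge2}|b_{ijk}|^p\Bigr),
\]
where the second term bounds the remaining matrix (all triples except $i=j=k$) via the entrywise $\ell^p$ bound and $|\sum|^p\le\sum|\cdot|^p$.

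The main obstacle is the off-diagonal error, which Lemma \ref{lem:three-index} only bounds by $\sum_i d_i^p$, the same size as the main term. This is resolved by random selection. Color each index in $F$ independently and uniformly with one of $M$ colors, and let $G_c$ be the indices of color $c$. Summing the inequality over $c$ gives
\[
\sum_{j\in F}d_j^p\lesssim M\|T_\mu\|_{S_p}^p+\sum_{\#\{i,j,k\}\ge2}|b_{ijk}|^p\,\mathbf 1[\text{$i,j,k$ same color}].
\]
Each off-diagonal triple survives with probability at most $1/M$, so the expectation of the error is at most $\frac1M\sum_i\sum_{j,k}|b_{ijk}|^p\le\frac{C}{M}\sum_{i\in F}d_i^p$ by Lemma \ref{lem:three-index}. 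Choose a coloring achieving at most the expectation and take $M=2C\cdot C'$ so that this term is absorbed, which is legitimate since the sum over the finite set $F$ is finite. This gives $\sum_{j\in F}d_j^p\lesssim M\|T_\mu\|_{S_p}^p$ with constants independent of $F$; letting $F\uparrow$ yields $\sum_j(\mu(D_j)K_j)^p\lesssim\|T_\mu\|_{S_p}^p$.

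Finally, transfer from the cells $D_i$ to the balls. Run the argument with a lattice of smaller radius $r'$ for the cells while keeping the same kernel lower bound, or equivalently cover $E(a_j,r)$ by the cells $D_i$ with $E(a_i,r)\cap E(a_j,r)\ne\emptyset$. These are boundedly many by finite overlap, and $K_i\asymp K_j$ by Lemma \ref{lm-diagonalBegmankernel}. Then
\[
\widehat\mu_r(a_j)^p\lesssim\sum_{i\sim j}d_i^p,
\]
and summing over $j$ with bounded multiplicity gives \eqref{eq:necessity-estimate}.
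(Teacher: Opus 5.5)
Your proposal is correct and follows essentially the paper's proof: the same atoms $h_j$ and synthesis operator $A$, the same random $M$-colouring of the cells, the three-index bound of Lemma \ref{lem:three-index} giving a $1/M$ factor in the expected error, absorption on a finite set $F$, and the same passage from the cells $D_i$ to the balls $E(a_j,r)$. Your compression by $P_G$ and your choice to keep only $b_{jjj}$ on the diagonal are harmless variants, and the compression in fact makes precise the paper's restriction of the matrix sums to $F_\ell$. One caveat: the ``elementary fact'' $\sum_j|c_{jj}|^p\le\|(c_{jk})\|_{S_p}^p$ is false for non-diagonal matrices when $0<p<1$; for positive matrices the inequality goes the other way, e.g.\ for the rank-one projection onto $(e_1+e_2)/\sqrt2$. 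You only ever apply it to the diagonal matrix $B_G$, where it is an equality, so the argument is unaffected.
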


\begin{proof}
 Let  
$F\subset\N$ be a finite set and  \(M\geq 2\) be an integer, we can define a map as \[\omega:F\to \{1,\dots,M\}.\]  Let $\Lambda_{M,F}$ be the the set of all of these mappings. Note that $\Lambda_{M,F}$ can be identity with   $\{1,2,\cdots,M\}^{F}$.  Define independent and identically distributed random variables as follows: 
\[\xi_j: \Lambda_{M,F}\to \{1,\dots,M\},\quad \xi_j(\omega)=\omega(j).\]
Let $F_\ell(\omega)=\{j\in F:\xi_j(\omega)=\ell\}$ and  $$\nu_\ell(\omega)=\sum_{j\in F}\chi_{\{\xi_j(\omega)=\ell\}}\mu|_{D_j}.$$  Since \(T_\mu\in S_p\)  and \(0\leq \nu_\ell(\omega)\leq \mu\) implies \(T_{\nu_l(\omega)}\in S_p\).

Let \(\{e_j\}\) an orthonormal basis of \(A^2(\Omega)\), we define an operator \(A\) on \(A^2(\Omega)\) by 
\[
A\left(\sum_{j=1}^\infty c_je_j\right)=\sum_{j=1}^\infty c_jh_j.
\]
By Lemma \ref{atomic} yields \(A\) is bounded and surjective.
 Since \(T_{\nu_\ell(\omega)}\in S_p\), by \cite[Proposition 1.30]{Zhu2007}, we also have \(T_\ell(\omega)=A^*T_{\nu_\ell(\omega)}A\in S_p\)  and 
 \begin{equation}\label{eq-Tlnorm}
 \sum_{\ell=1}^M\|T_\ell(\omega)\|_{S_p}^p\lesssim M\|T_\mu\|_{S_p}^p.
 \end{equation}

 We split the operator \(T_\ell(\omega)=D_\ell(\omega)+E_\ell(\omega)\), where \(D_\ell\) is the diagonal operator on \(A^2(\Omega)\) defined by 
 \[D_\ell(\omega)(f)=\sum_{j\in F_\ell}\langle D_\ell(\omega) e_j,e_j\rangle\langle f,e_j\rangle e_j,\quad f\in A^2(\Omega),\]
 and \(E_\ell(\omega)=T_\ell(\omega)-D_\ell(\omega)\).
 
 Since \(D_\ell(\omega)\) is a positive diagonal operator, we have 
 \begin{align*}
 \|D_\ell(\omega)\|_{S_p}^p&=\sum_{j\in F_\ell}\langle T_\ell(\omega) e_j,e_j\rangle=\sum_{j\in F_\ell} \langle T_{\nu_\ell(\omega)} h_j,h_j\rangle^p\\
 &=\sum_{j\in F_\ell}\left(\int_\Omega |h_j(z)|^2d\nu_\ell(\omega)(z)\right)^p\\
 &=\sum_{j\in F_\ell}\left(\sum_{i\in F_\ell}\int_\Omega |h_j(z)|^2d\nu_\ell(\omega)(z)\right)^p\\
 &\geq \sum_{j\in F_\ell} \left(\int_{D_j}|h_j(z)|^2d\nu_\ell(\omega)(z)\right)^p\\
 &\gtrsim \sum_{j\in F_\ell} (\mu(D_j)K_j)^p, 
 \end{align*}
 where the last inequality we use the \eqref{eq-lowerkernelestimates}.  Since \(\nu=\mu\) on \(D_i\), we have 
 \begin{equation}\label{diagonaloperator}
    \sum_{\ell=1}^M\|D_\ell(\omega)\|_{\Sp}^p\gtrsim \sum_{j\in F_\ell(\omega)} d_j^p.
 \end{equation}

 On the other hand, according to \cite[Proposition 1.29]{Zhu2007}, we have 
\begin{align*}
    \|E_\ell(\omega)\|_{S_p}^p&\leq \sum_{j\in F_\ell}\sum_{k\in F_\ell}|\langle E_\ell(\omega) e_j, e_k\rangle|^p=\sum_{\substack{j,k\in F_\ell\\ j\neq k}}|\langle T_{\nu_\ell(\omega)} h_j,h_k\rangle|^p\\
    &=\sum_{\substack{j,k\in F_\ell\\ j\neq k}} \left|\int_\Omega h_j(z)\overline{h_k(z)}d\nu_{\ell}(\omega)(z)\right|^p\\
    &\leq\sum_{\substack{j,k\in F_\ell\\ j\neq k}}\left(\sum_{i\in F_\ell}\int_\Omega|h_j(z)h_k(z)|d\nu_{\ell}(\omega)(z)\right)^p\\
    &\leq \sum_{\substack{i,j,k\in F_\ell\\ j\neq k}} |b_{ijk}|^p.
\end{align*}
Hence, 
\begin{equation}\label{off-diagonaloperator}
\sum_{\ell=1}^M\|E_\ell(\omega)\|_{S_p}^p\lesssim \sum_{\ell=1}^M\sum_{\substack{i,j,k\in F_\ell\\ j\neq k}} |b_{ijk}|^p
\end{equation}

By the definition of \(F_\ell\), \(F_\ell=\{q\in F:\xi_q=\ell\}\), hence \(i,j,k\in F_\ell \) is equivalent to \(\xi_i=\ell\), \(\xi_j=\ell\), \(\xi_k=\ell\), hence \(\chi_{\{i,j,k\in F_\ell\}}=\chi_{\{\xi_i=\ell\}}\chi_{\{\xi_j=\ell\}}\chi_{\{\xi_j=\ell\}}\). For  
fixed \((i,j,k)\), consider 
\begin{equation} \label{indexfunction}
    \sum_{\ell=1}^M \chi_{\{\xi_i=\ell\}}\chi_{\{\xi_j=\ell\}}\chi_{\{\xi_j=\ell\}}.
\end{equation}
It is obviously that \eqref{indexfunction} satisfies 
 \[
      \sum_{\ell=1}^M \chi_{\{\xi_i=\ell\}}\chi_{\{\xi_j=\ell\}}\chi_{\{\xi_j=\ell\}}\leq 1,
 \] we also have precisely the identity
\[\sum_{\ell=1}^M \chi_{\{\xi_i=\ell\}} \chi_{\{\xi_j=\ell\}} \chi_{\{\xi_k=\ell\}} =\sum_{\ell=1}^M \chi_{\{\xi_i=\xi_j=\xi_k\}}.
\]
By \eqref{off-diagonaloperator} and \eqref{indexfunction},
\[
\sum_{\ell=1}^M \|E_\ell(\omega)\|_{S_p}^p \lesssim \sum_{\ell=1}^M \sum_{\substack{i,j,k \in F \\ j \neq k}} \chi_{\{\xi_i = \ell\}} \chi_{\{\xi_j = \ell\}} \chi_{\{\xi_k = \ell\}} |b_{ijk}|^p.
\]
Since all sums are finite, we may interchange the order of summation:
\begin{align}\label{off-diagonaloperatorestimates}
    \begin{aligned}
        \sum_{\ell=1}^M \|E_\ell(\omega)\|_{S_p}^p &\lesssim \sum_{\substack{i,j,k \in F \\ j \neq k}} \left( \sum_{\ell=1}^M \chi_{\{\xi_i = \ell\}} \chi_{\{\xi_j = \ell\}} \chi_{\{\xi_k = \ell\}} \right) |b_{ijk}|^p. \\
 &\lesssim  \sum_{\substack{i,j,k\in F\\ j\neq k}} \chi_{\{\xi_i=\xi_j=\xi_k\}}|b_{ijk}|^p.
    \end{aligned}
 \end{align}

 Suppose that \(i=j\), then the event \(\{\xi_i=\xi_j=\xi_k\}\) holds if \(\{\xi_j=\xi_k\}\). Since \( j \neq k \), \(\xi_j\) and \(\xi_k\) are two mutually independent uniform random variables, so
\begin{align*}
\mathbb{P}(\xi_j = \xi_k) 
&= \sum_{\ell=1}^M \mathbb{P}(\xi_j = \ell, \xi_k = \ell) \\
&= \sum_{\ell=1}^M \frac{1}{M} \frac{1}{M} \\
&= \frac{1}{M}.
\end{align*}
Therefore
\begin{align*}
\mathbb{P}(\xi_i = \xi_j = \xi_k) = \frac{1}{M} .
\end{align*}
If \( i = k \), the same argument gives
\begin{align*}
\mathbb{P}(\xi_i = \xi_j = \xi_k) = \mathbb{P}(\xi_j = \xi_k) = \frac{1}{M}.
\end{align*}
It is impossible to have both \( i = j \) and \( i = k \), since this would imply \( j = k \), contradicting the off-diagonal condition.

Now suppose that \(i, j, k\) are pairwise distinct. In this case,
$\xi_i, \xi_j, \xi_k$ are three mutually independent uniform random variables. For a fixed event \(\ell\),
\begin{align*}
\mathbb{P}(\xi_i = \ell, \xi_j = \ell, \xi_k = \ell) = \frac{1}{M^3}.
\end{align*}
Hence, 
\begin{align*}
\mathbb{P}(\xi_i = \xi_j = \xi_k)
&= \sum_{\ell=1}^M \mathbb{P}(\xi_i = \ell, \xi_j = \ell, \xi_k = \ell) \\
&= M \cdot \frac{1}{M^3} \\
&= \frac{1}{M^2}.
\end{align*}
In a word, for all the case of \(j\neq k\), 
\[\mathbb{P}(\xi_i = \ell, \xi_j = \ell, \xi_k = \ell)\leq\frac{1}{M}.\]

Define the non-negative random variable
\[
X(\omega) := \sum_{\ell=1}^M \|E_\ell(\omega)\|_{S_p}^p.
\]
By \eqref{off-diagonaloperatorestimates}, 
\[
X(\omega) \lesssim \sum_{\substack{i,j,k \in F \\ j \neq k}} \chi_{\{\xi_i(\omega) = \xi_j(\omega) = \xi_k(\omega)\}} |b_{ijk}|^p.
\]
Taking expectations on both sides, Since \(F\) is finite, by Lemma \ref{lem:three-index}
\begin{align*}
    \mathbb{E}X(\omega) &\lesssim \sum_{\substack{i,j,k \in F \\ j \neq k}} \mathbb{E}\left[\chi_{\{\xi_i(\omega) = \xi_j(\omega) = \xi_k(\omega)\}} |b_{ijk}|^p\right]\\
    &=\sum_{\substack{i,j,k \in F \\ j \neq k}}|b_{ijk}|^p \mathbb{E}\chi_{\{\xi_i(\omega) = \xi_j(\omega) = \xi_k(\omega)\}}\\
    &=\sum_{\substack{i,j,k \in F \\ j \neq k}}|b_{ijk}|^p \mathbb{P}(\xi_i(\omega) = \xi_j(\omega) = \xi_k(\omega)) \\
    &\lesssim \frac{1}{M}\sum_{i\in F} d_i^p,
\end{align*}
hence 
\begin{equation}\label{eq-expectation}
    \mathbb{E}\sum_{\ell=1}^M\|E_\ell(\omega)\|_{S_p}^p\lesssim \frac{1}{M}\sum_{i\in F}d_i^p.
\end{equation}

All possible events form the finite sample space
\(\Lambda_{M,F} = \{1, \ldots, M\}^F\), The total number of events are 
$\#\Lambda_{M,F} = M^{\#F}$. Since every event has the same probability, then 
\[
\mathbb{E}X(\omega) = \frac{1}{M^{\#F}} \sum_{\omega \in \Lambda_{M,F}} X(\omega).
\]
This implies there exist a event \(\omega^*\) such that 
\[X(\omega^*)\leq \mathbb{E}X(\omega).\] Combing with \eqref{eq-expectation}, then 
\[
\sum_{\ell=1}^M \left\| E_\ell(\omega^*) \right\|_{S_p}^p \lesssim \frac{1}{M} \sum_{i \in F} d_i^p.
\]
Since \(D_\ell(\omega)=T_\ell(\omega)-E_\ell(\omega)\), by the triangle inequality, we have 
\begin{equation}\label{eq-triangle}
    \|T_\ell(\omega^*)\|_{S_p}^p\geq \|D_\ell(\omega^*)\|_{S_p}^p-\|E_\ell(\omega^*)\|_{S_p}^p.
\end{equation}
By \eqref{eq-Tlnorm}, \eqref{diagonaloperator} , \eqref{eq-expectation} and \eqref{eq-triangle}, 
\begin{equation*}
 c\sum_{i\in F}d_i^p
 \leq C_pM\|T_\mu\|_{S_p}^p
      +\frac{C_p'}M\sum_{i\in F}d_i^p.
\end{equation*}
Choose a fixed integer $M$ sufficiently large, e. g. \((M\geq \frac{2C_p^\prime}{c})\), the choice is independent of $F$, and therefore
\begin{equation}\label{eq-diTmu}
 \sum_{i\in F}d_i^p\lesssim_p\|T_\mu\|_{S_p}^p.
\end{equation}
Letting $F$ increase to $\N$, then
\begin{equation}\label{eq:cell-bound}
 \sum_i[\mu(D_i)K_i]^p\lesssim_p\|T_\mu\|_{S_p}^p.
\end{equation}

It remains to pass from \(D_i\) to Kobayashi balls. Define
\[
 \mathcal N(j)=\{i:D_i\cap E(a_j,r)\neq\varnothing\}.
\]
Since \(\Omega = \bigsqcup_i D_i\), by the definition of \(D_i\), 
\begin{align}\label{eq-muEj}
    \begin{aligned}
       \mu(E_j) &= \sum_{i \in \mathcal{N}(j)} \mu(E_j \cap D_i)\\
    &\leq \sum_{i \in \mathcal{N}(j)} \mu(D_i). 
    \end{aligned}
    \end{align}
For \(i\in \mathcal{N}(j)\), there exist \(z\in D_i\bigcap E(a_j,r)\), hence there exist \(R=\frac{2r}{1+r^2}\) such that  \(a_i\in E(a_j,R)\), by Lemma \ref{lm-diagonalBegmankernel}
\begin{equation}\label{eq-KiKj}
    K_i\asymp K_j.
\end{equation}. 
 By the definition of \(a_i\), there exist \(\sigma>0\) such that \(E(a_i,\sigma)\bigcap E(a_j,\sigma)=\emptyset, \,i\neq j\). If \(a_i\in E(a_j, R)\), then there exist \(R_\sigma=\frac{r+\sigma}{1+r\sigma}\) such that \(E(a_i,\sigma)\subset E(a_j, R_\sigma)\). By \eqref{kernel-volume}
\begin{align*}
\#\mathcal{N}(j) \, c_\sigma 
&\leq \sum_{i \in \mathcal{N}(j)} \lambda(E(a_i, \sigma)) \\
&= \lambda\left(\bigsqcup_{i \in \mathcal{N}(j)} E(a_i, \sigma)\right) \\
&\leq \lambda(E(a_j, R_\sigma)) \\
&\leq C_{R, \sigma},
\end{align*}
hence 
\[
\#N(j) \leq \frac{C_{R,\sigma}}{c_\sigma} =: N_0.
\]
is independent of \(j\). 

Difine 
\[
\mathcal{N}^*(i) := \{j : i \in \mathcal{N}(j)\} = \{j : D_i \cap E(a_j, r) \neq \varnothing\}, 
\]
Argue as above, 
\[\#N^*(i) \leq N_0\] 
is independent of \(i\). 
By \eqref{eq-muEj} and \eqref{eq-KiKj} and \(0<p<1\), 
\begin{align*}
    \sum_{j}\widehat{\mu}_r(a_j)^p&\lesssim \sum_{j}\sum_{i\in \mathcal{N}(j)}d_i^p\\
    &=\sum_i d_i^p \sum_j \chi_{\{i \in \mathcal{N}(j)\}}\\
&= \sum_i d_i^p \# \{j : i \in \mathcal{N}(j)\}\\
& \leq N_0\sum_i d_i^p\\
&\lesssim \|T_\mu\|_{S_p}^p,
\end{align*}
where the last inequality we use the \eqref{eq-diTmu}.
\end{proof}

\subsection{The equivalence of (ii) and (iii) in Theorem \ref{thm:main} }

 In this section, we show that (ii) is equivalent to (iii) in Theorem \ref{thm:main}.

\begin{lemma}\label{lem:discrete-radius}
Let $p>0$,  $\{a_j\}$ be a Kobayashi $\rho$-lattice, \(\mu\) be a non-negative Borel measure and let
$\rho\leq R<1$.  
Then
\begin{equation*}\label{eq:discrete-radius}
 \sum_j\widehat{\mu}_R(a_j)^p\asymp\sum_j\widehat{\mu}_\rho(a_j)^p.
\end{equation*}
\end{lemma}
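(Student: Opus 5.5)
The lower bound $\sum_j\widehat{\mu}_\rho(a_j)^p\lesssim\sum_j\widehat{\mu}_R(a_j)^p$ is immediate. Since $\rho\le R$, we have $E(a_j,\rho)\subset E(a_j,R)$, so $\mu(E(a_j,\rho))\le\mu(E(a_j,R))$. By \eqref{Kobayashivolume} (equivalently \eqref{kernel-volume}), $|E(a_j,\rho)|\asymp|E(a_j,R)|\asymp K(a_j,a_j)^{-1}$, with constants depending only on $\rho,R,\Omega$. Hence $\widehat{\mu}_\rho(a_j)\lesssim\widehat{\mu}_R(a_j)$ termwise, and we raise to the power $p$ and sum.

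For the upper bound we cover each large ball by small ones. Because $\{a_j\}$ is a Kobayashi $\rho$-lattice, $\Omega=\bigcup_i E(a_i,\rho)$. Set
\[
\mathcal M(j)=\{i: E(a_i,\rho)\cap E(a_j,R)\neq\varnothing\}.
\]
Then $\mu(E(a_j,R))\le\sum_{i\in\mathcal M(j)}\mu(E(a_i,\rho))$. For $i\in\mathcal M(j)$, the triangle inequality for $\beta$ gives $a_i\in E(a_j,R')$ for some $R'<1$ depending only on $\rho,R$. Lemma \ref{lm-diagonalBegmankernel} then gives $K(a_i,a_i)\asymp K(a_j,a_j)$, and therefore
\[
\widehat{\mu}_R(a_j)\lesssim\sum_{i\in\mathcal M(j)}\widehat{\mu}_\rho(a_i).
\]

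The key step is a uniform bound $\#\mathcal M(j)\le N_0$, together with the symmetric bound $\#\{j: i\in\mathcal M(j)\}\le N_0$, with $N_0$ depending only on $\rho,R,\Omega$. We obtain this by the same packing argument used at the end of the proof of Proposition \ref{prop:necessity}. The lattice points are uniformly separated, so there is $\sigma>0$ with the balls $E(a_i,\sigma)$ pairwise disjoint. For $i\in\mathcal M(j)$ these balls lie in $E(a_j,R'')$ for some $R''<1$. Each has $\lambda$-measure $\lambda(E(a_i,\sigma))=\int_{E(a_i,\sigma)}K\,dV\asymp 1$ by \eqref{kernel-volume} and Lemma \ref{lm-diagonalBegmankernel}, while $\lambda(E(a_j,R''))\lesssim1$. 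Comparing measures bounds the count, and the relation $i\in\mathcal M(j)$ is symmetric in the sense needed, so the same bound holds for the other count. Alternatively, we can invoke the finite overlap of enlarged balls from Lemma \ref{Kobayashilattice}.

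With the counting bound we conclude. For $0<p\le1$ we use subadditivity $(\sum x_i)^p\le\sum x_i^p$; for $p>1$ we use $(\sum_{i\in\mathcal M(j)}x_i)^p\le N_0^{p-1}\sum x_i^p$. Either way,
\[
\sum_j\widehat{\mu}_R(a_j)^p\lesssim\sum_j\sum_{i\in\mathcal M(j)}\widehat{\mu}_\rho(a_i)^p=\sum_i\widehat{\mu}_\rho(a_i)^p\,\#\{j:i\in\mathcal M(j)\}\le N_0\sum_i\widehat{\mu}_\rho(a_i)^p.
\]
The only delicate point is making the uniform counting bound rigorous. This requires the lattice separation constant and the $\lambda$-volume comparability $\lambda(E(z,s))\asymp_s 1$, both of which are available from \eqref{kernel-volume} and Lemma \ref{lm-diagonalBegmankernel}.
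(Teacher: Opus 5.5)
Your proof is correct and follows essentially the same route as the paper. In both, $E(a_j,R)$ is covered by the small lattice balls meeting it, $K(a_i,a_i)\asymp K(a_j,a_j)$ is used for neighbors, both neighbor counts are bounded uniformly via the packing argument from the end of Proposition \ref{prop:necessity}, and the proof finishes with $(\sum_{k\le L}x_k)^p\le L^{(p-1)_+}\sum_k x_k^p$ before summing over $j$; you additionally write out the easy reverse inequality and the packing details that the paper leaves implicit.
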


\begin{proof}
    We only need show that \(\sum_j\widehat{\mu}_R(a_j)^p\lesssim\sum_j\widehat{\mu}_\rho(a_j)^p.\)
For each $j$, let
\[
 \mathcal N_j(R)=
 \{k:E(a_k,\rho)\cap E(a_j,R)\neq\varnothing\}
\]
and 
\[\mathcal N^*_i(R)=
 \{j: i\in\mathcal{N}_j(R)\}.\]
The covering property gives
$E(a_j,R)\subset\bigcup_{k\in\mathcal N_j(R)}E(a_k,\rho)$.
If $k\in\mathcal N_j(R)$, then Similarly arguing as in the last part in Proposition \ref{prop:necessity} shows that $K(a_k,a_k)\asymp K(a_j,a_j)$, \(\# \mathcal N_j(R)<\infty\) and \(\#\mathcal{N}_i^*(R)<\infty\).  Therefore
\begin{equation}\label{eq:d-neighbor}
 \widehat{\mu}_R(a_j)\lesssim\sum_{k\in\mathcal N_j(R)}\widehat{\mu}_\rho(a_k). 
\end{equation}

For non-negative $x_1,\dots,x_L$,
\begin{equation*}\label{eq:finite-p-sum}
 \left(\sum_{k=1}^Lx_k\right)^p
 \leq L^{(p-1)_+}\sum_{k=1}^Lx_k^p,
 \qquad (p-1)_+=\max\{p-1,0\}.
\end{equation*}
Both the cardinalities of $\mathcal N_j(R)$ and $\mathcal N_i^*(R)$  are uniformly bounded, apply this to \eqref{eq:d-neighbor}, then sum in $j$.
\end{proof}

We use hyperbolic addition
\begin{equation*}\label{eq:hyperbolic-addition}
 r\oplus s=\frac{r+s}{1+rs}.
\end{equation*}
The triangle inequality implies
\begin{equation*}\label{eq:ball-triangle}
 w\in E(z,r),\quad u\in E(w,s)
 \quad\Longrightarrow\quad u\in E(z,r\oplus s).
\end{equation*}

\begin{lemma}
\label{lem:continuous-radius}
Let $p>0$ and $0<r,s<1$.  Then
\begin{equation*}
 \|\mathcal A_r\mu\|_{L^p(d\lambda)}^p
 \asymp
 \|\mathcal A_s\mu\|_{L^p(d\lambda)}^p.
\end{equation*}
The same assertion holds for $\widehat\mu_r$. 
\end{lemma}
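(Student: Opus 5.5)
By symmetry it suffices to show that $\|\mathcal A_r\mu\|_{L^p(d\lambda)}\lesssim\|\mathcal A_s\mu\|_{L^p(d\lambda)}$ for all $0<r,s<1$. The case $r\le s$ is trivial, since $E(z,r)\subset E(z,s)$ gives $\mathcal A_r\mu\le\mathcal A_s\mu$ pointwise. So assume $s<r$. Because $\widehat\mu_r\asymp\mathcal A_r\mu$ by \eqref{average-area}, with constants depending only on $r$ and $\Omega$, the statement for $\widehat\mu_r$ will follow from the one for $\mathcal A_r\mu$.

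The plan is to discretize and then reuse Lemma \ref{lem:discrete-radius}. Fix a Kobayashi $\rho$-lattice $\{a_j\}$ with $\rho$ small enough that $\rho\oplus\rho\le s$. I would establish the two comparisons
\[
\int_\Omega(\mathcal A_r\mu)^p\,d\lambda\lesssim\sum_j\bigl[\mu(E(a_j,r\oplus\rho))K_j\bigr]^p,
\qquad
\sum_j\bigl[\mu(E(a_j,\rho))K_j\bigr]^p\lesssim\int_\Omega(\mathcal A_s\mu)^p\,d\lambda .
\]
Given these, Lemma \ref{lem:discrete-radius}, applied with the radii $\rho\le r\oplus\rho<1$, links the two lattice sums and finishes the proof.

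For the first comparison, I cover $\Omega$ by the balls $E(a_j,\rho)$. For $z\in E(a_j,\rho)$, the hyperbolic triangle inequality gives $E(z,r)\subset E(a_j,r\oplus\rho)$, and Lemma \ref{lm-diagonalBegmankernel} gives $K(z,z)\asymp K_j$. Hence $\mathcal A_r\mu(z)\lesssim\mu(E(a_j,r\oplus\rho))K_j$ on that ball. Summing over $j$ and using $\lambda(E(a_j,\rho))=\int_{E(a_j,\rho)}K\,dV\asymp 1$ (by \eqref{kernel-volume} and Lemma \ref{lm-diagonalBegmankernel}) yields the first inequality. For the second comparison, take $z\in E(a_j,\rho)$. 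Then $E(a_j,\rho)\subset E(z,\rho\oplus\rho)\subset E(z,s)$, so $\mu(E(a_j,\rho))K_j\lesssim\mathcal A_s\mu(z)$ on $E(a_j,\rho)$. Raising to the power $p$, integrating against $d\lambda$ over $E(a_j,\rho)$, and summing in $j$ gives the second inequality. Here the finite overlap of the lattice balls (Lemma \ref{Kobayashilattice}) ensures that the sum of the integrals is at most $M_\rho\int_\Omega(\mathcal A_s\mu)^p\,d\lambda$.

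The main obstacle is bookkeeping with the constants. One needs the uniformity in Lemma \ref{lm-diagonalBegmankernel} for the radii $r\oplus\rho$ and $\rho\oplus\rho$, which are both still less than $1$. One also needs $\lambda(E(a_j,\rho))\asymp 1$ uniformly in $j$, including for centers far from the boundary, where $K\asymp 1$ and the Euclidean and Kobayashi metrics are comparable. Lemma \ref{lem:discrete-radius} is stated with a $\rho$-lattice and the radii $\rho\le R$, which is exactly the form used above, so no new counting argument should be needed. No restriction on $p>0$ enters anywhere, because we only use pointwise bounds and finite overlap, never a triangle inequality in $L^p$.
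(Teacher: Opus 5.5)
Your proposal is correct and follows the paper's proof: you fix a fine Kobayashi lattice with $\rho\oplus\rho\le s$ (the paper's $t\oplus t<s$), bound the $\rho$-lattice sum by $\|\mathcal A_s\mu\|_{L^p(d\lambda)}^p$ using $E(a_j,\rho)\subset E(z,s)$ and finite overlap, bound $\|\mathcal A_r\mu\|_{L^p(d\lambda)}^p$ by the $(r\oplus\rho)$-lattice sum using $E(z,r)\subset E(a_j,r\oplus\rho)$, and close with Lemma \ref{lem:discrete-radius}. Your write-up is slightly cleaner than the paper's: the paper mixes up $a_j$ and $b_j$ and carries a spurious factor $(t\oplus r)^p$, whereas you use $\lambda(E(a_j,\rho))\asymp 1$ correctly.
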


\begin{proof}
    Choose $t>0$ so small that $t\oplus t<s$, and let $\{b_j\}$ be a
Kobayashi $t$-lattice.  If $z\in E(b_j,t)$, then
$E(b_j,t)\subset E(z,t\oplus t)\subset E(z,s)$, hence by \eqref{kernel-volume}
\[\widehat{\mu}_t(a_j)\lesssim \mathcal A_s\mu(z),\] 
by \eqref{kernel-volume} and the finiteness of overlap, 
\[\sum_j\widehat{\mu}_{t}(a_j)^p\lesssim \|\mathcal A_s\mu\|_{L^p(d\lambda)}^p.\]
For again $z\in E(b_j,t)$, then
$E(z,r)\subset E(b_j,t\oplus r)$ and hence
$\mathcal A_r\mu(z)\lesssim \widehat{\mu}_{t\oplus r}(b_j)$.  Lemma
\ref{lem:discrete-radius} now yields
\begin{align*}
 \|\mathcal A_r\mu\|_{L^p(d\lambda)}^p
 &\leq\sum_j\int_{E(b_j,t)}\mathcal A_r\mu(z)^p\,d\lambda(z)\\
 &\lesssim\sum_j\widehat{\mu}_{t\oplus r}(a_j)^p(t\oplus r)^p\\
 &\lesssim\sum_j\widehat{\mu}_t(a_j)^p\lesssim \|\mathcal A_s\mu\|_{L^p(d\lambda)}^p.
\end{align*}
Interchanging \(r\) and \(s\) yields the reverse inequality, As for case \(\widehat{\mu}_r(z)\), it can be handled Similarly by \eqref{average-area}.
\end{proof} 

Now we give the main result in this section.
\begin{proposition}
\label{prop:lattice-average}
Let $0<p<\infty$, \(\mu\) be a non-negative Borel measure,  $\{a_j\}$ be a Kobayashi $\rho$-lattice, and
fix $0<s<1$.  Then
\begin{equation*}
 \int_\Omega \mathcal A_s\mu(z)^pd\lambda(z)\asymp \int_\Omega\widehat\mu_s(z)^p\,d\lambda(z)
 \asymp
 \sum_j\widehat{\mu}_\rho(a_j)^p.
\end{equation*}
In particular, finiteness is independent of the fixed radius and of
the particular lattice.
\end{proposition}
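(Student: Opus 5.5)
The first equivalence is immediate from \eqref{average-area}. Since $|E(z,s)|\asymp K(z,z)^{-1}$ by \eqref{kernel-volume}, with constants depending only on $s$ and $\Omega$, we have $\mathcal A_s\mu(z)\asymp\widehat\mu_s(z)$ pointwise. Raising to the power $p$ and integrating against $d\lambda$ gives $\int_\Omega\mathcal A_s\mu^p\,d\lambda\asymp\int_\Omega\widehat\mu_s^p\,d\lambda$. It therefore remains to compare $\int_\Omega\widehat\mu_s^p\,d\lambda$ with the discrete sum $\sum_j\widehat\mu_\rho(a_j)^p$ for an arbitrary $\rho$-lattice.

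Three tools do the work: Lemma \ref{lem:continuous-radius}, which says the radius of the continuous average can be changed freely; Lemma \ref{lem:discrete-radius}, which says the radius in the discrete sum can be enlarged freely; and the fact that each Kobayashi ball $E(a_j,\rho)$ has bounded $\lambda$-measure from above and below. The last fact follows from Lemma \ref{lm-diagonalBegmankernel} and \eqref{kernel-volume}, which give $\lambda(E(a,\rho))\asymp K(a,a)|E(a,\rho)|\asymp 1$.

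\emph{Upper bound for the integral.} The balls $E(a_j,\rho)$ cover $\Omega$, so
\[
\int_\Omega\widehat\mu_s^p\,d\lambda\le\sum_j\int_{E(a_j,\rho)}\widehat\mu_s(z)^p\,d\lambda(z).
\]
For $z\in E(a_j,\rho)$ the triangle inequality gives $E(z,s)\subset E(a_j,\rho\oplus s)$. Also $K(z,z)\asymp K(a_j,a_j)$, so $\widehat\mu_s(z)\lesssim\widehat\mu_{\rho\oplus s}(a_j)$. Since $\lambda(E(a_j,\rho))\lesssim1$, we get
\[
\int_\Omega\widehat\mu_s^p\,d\lambda\lesssim\sum_j\widehat\mu_{\rho\oplus s}(a_j)^p\lesssim\sum_j\widehat\mu_\rho(a_j)^p,
\]
where the last step is Lemma \ref{lem:discrete-radius} with $R=\rho\oplus s\ge\rho$.

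\emph{Lower bound for the integral.} Here the radius must be chosen with care. By Lemma \ref{lem:continuous-radius} we may replace $s$ by any radius $s'$ at our convenience. Take $s'=\rho\oplus\rho$. For $z\in E(a_j,\rho)$ we have $E(a_j,\rho)\subset E(z,s')$, and $K(z,z)\asymp K(a_j,a_j)$, so $\widehat\mu_\rho(a_j)\lesssim\widehat\mu_{s'}(z)$ on $E(a_j,\rho)$. Averaging over $E(a_j,\rho)$ with respect to $\lambda$, using $\lambda(E(a_j,\rho))\gtrsim1$, gives
\[
\widehat\mu_\rho(a_j)^p\lesssim\int_{E(a_j,\rho)}\widehat\mu_{s'}^p\,d\lambda .
\]
Summing over $j$ and using the uniformly bounded overlap of the balls $E(a_j,\rho)$ from Lemma \ref{Kobayashilattice} yields
\[
\sum_j\widehat\mu_\rho(a_j)^p\lesssim\int_\Omega\widehat\mu_{s'}^p\,d\lambda\asymp\int_\Omega\widehat\mu_s^p\,d\lambda .
\]

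The step needing most care is this lower bound. One must pass through an enlarged radius $s'$, because when $s$ is small the ball $E(z,s)$ need not contain $E(a_j,\rho)$. The overlap bound is also only stated for the enlarged balls $E(a_j,\frac{1+\rho}{2})$, but these contain the balls $E(a_j,\rho)$, so that bound suffices.

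Once the two bounds are in hand, the final remark of the proposition follows at once. The integral side does not depend on the lattice, and the discrete side does not depend on $s$. So finiteness of either quantity is independent of the radius and of the particular lattice chosen.
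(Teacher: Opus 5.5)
Your proof is correct and follows the paper's route. For the upper bound you cover $\Omega$ by the lattice balls, use $E(z,s)\subset E(a_j,\rho\oplus s)$, and apply Lemma \ref{lem:discrete-radius}; for the lower bound you pass to the enlarged radius $\rho\oplus\rho$, so that $E(a_j,\rho)\subset E(z,\rho\oplus\rho)$, and apply Lemma \ref{lem:continuous-radius}. You also make explicit three facts the paper uses only implicitly: the pointwise comparison $\mathcal A_s\mu\asymp\widehat\mu_s$, the two-sided bound $\lambda(E(a_j,\rho))\asymp 1$, and the bounded-overlap step.
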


\begin{proof}
    Suppose that \(\sum_j\widehat{\mu}_\rho(a_j)^p<\infty\).  For \(z\in E(a_j,\rho)\), by triangle inequality and Lemma \ref{lm-diagonalBegmankernel}, \eqref{kernel-volume} and Lemma \ref{lem:discrete-radius} yields
    \begin{align*}
      \int_{\Omega} A_s \mu(z)^p \, d\lambda(z)
&\leq \sum_j \int_{E(a_j, \rho)} A_s \mu(z)^p \, d\lambda(z) \\
&\lesssim \sum_j \widehat{\mu}_{\rho \oplus s}(a_j)^p \lambda(E(a_j, \rho)) \\
 &\lesssim \sum_j \widehat{\mu}_{\rho \oplus s}(a_j)^p.
    \end{align*}

Conversely, now suppose that \(\widehat{\mu}_s(z)\in L^p(d\lambda)\),set $S=\rho\oplus\rho$, if 
$z\in E(a_j,\rho)$, then   $E(a_j,\rho)\subset E(z,S)$, and therefore for \(z\in E(a_j,\rho)\), 
\[
 \widehat{\mu}_\rho(a_j)^p
 \lesssim\mathcal A_S\mu(z)^p\, 
\]
by \eqref{kernel-volume} and Lemma \ref{lem:continuous-radius}
\begin{align*}
\sum_j \widehat{\mu}_\rho(a_j)^p 
&\lesssim \sum_j \int_{E(a_j, \rho)} A_S \mu(z)^p \, d\lambda(z) \\
&\lesssim \int_{\Omega} \mathcal{A}_S \mu(z)^p \, d\lambda(z)\asymp \int_\Omega \mathcal {A}_s \mu(z)^p \, d\lambda(z).
\end{align*}
\end{proof}

\section{Berezin transform}
\label{Berezin}

In this section, we introduce the Berezin transform and higher order Berezin type transform  for the integral characterizations of Schatten class Toeplitz operators. The Berezin transform has an integrability threshold that limits its range of applicability, while suitably normalized higher powers of the Bergman kernel provide the stronger integrability estimates needed at smaller exponents. 

There is also a useful direct explanation of the Schatten class by trace formula for the case of \(p\geq 1\).

\begin{proposition}\label{prop:jensen}
If $T\geq0$ and $T\in S_p(A^2(\Omega))$ with $1\leq p<\infty$, then
\begin{equation}\label{eq:jensen-bound}
 \int_\Omega\langle Tk_z,k_z\rangle^p\dd\lambda(z)
 \leq \Tr(T^p)=\|T\|_{S_p}^p.
\end{equation}
In particular, $T_\mu\in S_p$ implies
$\widetilde\mu\in L^p(\dd\lambda)$.
\end{proposition}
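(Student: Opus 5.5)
The argument is the standard spectral-measure/Jensen argument, carried out through the spectral decomposition of $T$. Since $T\geq 0$ and $T\in S_p\subset$ compact, write
\[
Tf=\sum_k \lambda_k\langle f,e_k\rangle e_k,
\]
with $\{e_k\}$ an orthonormal basis of $A^2(\Omega)$ consisting of eigenvectors; this is possible after completing an eigenbasis of $\overline{\operatorname{ran}T}$ by a basis of $\ker T$, on which $\lambda_k=0$. Then for each $z\in\Omega$ we have
\[
\langle Tk_z,k_z\rangle=\sum_k\lambda_k|\langle k_z,e_k\rangle|^2=\sum_k\lambda_k\frac{|e_k(z)|^2}{K(z,z)},
\]
using the reproducing property $\langle e_k,K_z\rangle=e_k(z)$.

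The key observation is that the weights $w_k(z)=|e_k(z)|^2/K(z,z)$ are non-negative and satisfy $\sum_k w_k(z)=1$, by the identity $K(z,z)=\sum_k|e_k(z)|^2$ for an orthonormal basis recalled in Section~2. Thus $\langle Tk_z,k_z\rangle$ is a convex combination of the eigenvalues. For $p\geq1$ the map $t\mapsto t^p$ is convex on $[0,\infty)$, so Jensen's inequality for this (countable) probability vector gives
\[
\langle Tk_z,k_z\rangle^p\leq\sum_k\lambda_k^p\frac{|e_k(z)|^2}{K(z,z)}.
\]
To justify Jensen for countable sums, one may apply the finite version to truncations and let the number of terms increase, using continuity of $t^p$ and monotone convergence. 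Alternatively, one can argue via the spectral measure of $T$ relative to the unit vector $k_z$, giving $\langle Tk_z,k_z\rangle^p\leq\langle T^pk_z,k_z\rangle$ directly.

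Integrating against $d\lambda(z)=K(z,z)\,dV(z)$ cancels the factor $K(z,z)$. By Tonelli (all terms are non-negative) we obtain
\[
\int_\Omega\langle Tk_z,k_z\rangle^p\,d\lambda(z)\leq\sum_k\lambda_k^p\int_\Omega|e_k(z)|^2\,dV(z)=\sum_k\lambda_k^p=\Tr(T^p)=\|T\|_{S_p}^p,
\]
since $\|e_k\|_{A^2}=1$ and the singular values of a positive operator are its eigenvalues.

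For the final assertion, apply this with $T=T_\mu$. The operator $T_\mu$ is positive because $\langle T_\mu f,f\rangle=\int_\Omega|f|^2\,d\mu\geq0$; this follows from Fubini, which is legitimate once $T_\mu$ is bounded, and boundedness is implied by $T_\mu\in S_p$. Moreover $\langle T_\mu k_z,k_z\rangle=\widetilde\mu(z)$ by definition, so $\widetilde\mu\in L^p(d\lambda)$.

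The only step needing care is the exchange of Jensen with an infinite sum and the measure-theoretic justification of interchanging sum and integral. Both are handled by positivity via monotone convergence and Tonelli, so no step is a genuine obstacle.
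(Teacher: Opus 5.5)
Your proof is correct and follows the paper's own argument. The paper cites Zhu's Proposition~1.31 for $\langle Tk_z,k_z\rangle^p\leq\langle T^pk_z,k_z\rangle$, whereas you derive it directly as Jensen's inequality for the probability weights $|e_k(z)|^2/K(z,z)$ coming from a full eigenbasis; the subsequent Tonelli and reproducing-kernel computation giving $\sum_k\lambda_k^p=\Tr(T^p)$ is the same as in the paper.
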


\begin{proof}
For the  unit
vector $k_z$, \cite[Proposition 1.31]{Zhu2007} yields 
\[
 \langle Tk_z,k_z\rangle^p\leq\langle T^pk_z,k_z\rangle.
\]
By spectral decompostion,  $T^p=\sum_m\lambda_m^p e_m\otimes e_m$, Tonelli's theorem and the
reproducing property give
\[
 \begin{split}
 \int_\Omega\langle T^pk_z,k_z\rangle\dd\lambda(z)
 &=\sum_m\lambda_m^p\int_\Omega
      |\langle k_z,e_m\rangle|^2K(z,z)\dd V(z)\\
 &=\sum_m\lambda_m^p\int_\Omega|e_m(z)|^2\dd V(z)
 =\sum_m\lambda_m^p.
 \end{split}
\]
This proves \eqref{eq:jensen-bound}.
\end{proof}
\begin{remark}
For \(p\geq 1\), \(\langle T^pk_z,k_z\rangle\geq \langle Tk_k,k_z\rangle\), thus Proposition~\ref{prop:jensen} cannot prove the small exponent
Schatten class Berezin implication.     
\end{remark}

\subsection{The sharp range for the ordinary Berezin transform}
\label{sec:sharp-berezin}

Noticing that the condition in Theorem \ref{XiaoToeplitz} is 
\begin{equation}\label{eq:XYY34}
 \int_\Omega
 \left[
   \int_\Omega
   \left(\frac{|K(z,w)|}{K(z,z)}\right)^2
   \frac{\dd\mu(w)}{V(E(z,r))}
 \right]^p
 \frac{\dd V(z)}{V(E(z,r))}<\infty.
\end{equation}
By Lemma \ref{lm-diagonalBegmankernel} and \eqref{kernel-volume},
\[
 \begin{split}
 \int_\Omega
 \left(\frac{|K(z,w)|}{K(z,z)}\right)^2
 \frac{\dd\mu(w)}{V(E(z,r))}
 &\asymp
 \frac{1}{K(z,z)}\int_\Omega|K(z,w)|^2\dd\mu(w)\\
 &=\widetilde\mu(z),
 \end{split}
\]
while \[
 \frac{\dd V(z)}{V(E(z,r))}\asymp K(z,z)\dd V(z)=\dd\lambda(z).
\]
Consequently, \eqref{eq:XYY34} is precisely
$\widetilde\mu\in L^p(\dd\lambda)$. Thus Xiao-Yang-Yuan essentially obtained the following result.

\begin{thm}\cite{Xiao2026}\label{Xiaoessential}
     Let $1\leq p<\infty$ and let $\mu$ be a finite positive Borel measure
on $\Omega$.  For every sufficiently small fixed lattice radius, the
following are equivalent:
\begin{enumerate}
\item [(i)] $T_\mu\in S_p(A^2(\Omega))$;
\item [(ii)] $\{\widehat{\mu}_r(a_j)\}\in\ell^p$, that is,
      \[
       \sum_j[\mu(E(a_j,r))K(a_j,a_j)]^p<\infty;
      \]
\item [(iii)] $\mathcal A_r\mu\in L^p(\Omega,\dd\lambda)$;
\item [(iv)] $\widehat\mu_r\in L^p(\Omega,\dd\lambda)$;
\item [(v)] $\widetilde\mu\in L^p(\Omega,\dd\lambda)$.
\end{enumerate}
\end{thm}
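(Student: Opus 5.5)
The plan is to assemble the equivalences from what is already proved, plus a two-sided comparison between the Berezin transform and the averaging function. Fix $1\le p<\infty$ and $r$ small, below the $r_*$ of Theorem \ref{mainth1-3}. The equivalence (i)$\Leftrightarrow$(ii), with $\|T_\mu\|_{S_p}^p\asymp\sum_j\widehat\mu_r(a_j)^p$, is Theorem \ref{XiaoToeplitz} rewritten through \eqref{eq:XYY34}. Alternatively, the sufficiency half can be taken from Proposition \ref{prop:sufficiency}. That proposition was stated for $p\le 1$, but its localization argument carries over for $p\ge1$ once the quasi-triangle inequality is replaced by Minkowski's inequality in $S_p$, at least locally; the global summation then needs finite overlap. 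To keep the argument short I would simply cite Theorem \ref{XiaoToeplitz}. The equivalences (ii)$\Leftrightarrow$(iii)$\Leftrightarrow$(iv) are exactly Proposition \ref{prop:lattice-average}, which holds for every $0<p<\infty$, together with \eqref{average-area}. It therefore remains to attach (v).

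\textbf{(i)$\Rightarrow$(v).} This is Proposition \ref{prop:jensen} applied to $T=T_\mu\ge0$. Note that $\langle T_\mu k_z,k_z\rangle=\int_\Omega |k_z|^2\,d\mu=\widetilde\mu(z)$. The proposition then gives $\|\widetilde\mu\|_{L^p(d\lambda)}^p\le\|T_\mu\|_{S_p}^p$.

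\textbf{(v)$\Rightarrow$(iv).} I will prove the pointwise bound $\widehat\mu_r(z)\lesssim\widetilde\mu(z)$ for $r<r_0$, where $r_0$ is as in \eqref{eq-lowerkernelestimates}. For $z$ in the tubular neighbourhood $U$, \eqref{eq-lowerkernelestimates} and \eqref{kernel-volume} give $|K(z,w)|\gtrsim K(z,z)$ for all $w\in E(z,r)$. Hence
\[
\widetilde\mu(z)\ge \frac{1}{K(z,z)}\int_{E(z,r)}|K(z,w)|^2\,d\mu(w)\gtrsim K(z,z)\,\mu(E(z,r))\asymp\widehat\mu_r(z).
\]
For $z$ in the compact set $\Omega\setminus U$, the same lower bound follows from continuity and positivity of $K$ near the diagonal, after shrinking $r$ uniformly. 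This compact piece is the step needing a small extra argument, since \eqref{eq-lowerkernelestimates} is only stated on $U$. There I would use $K(z,z)\ge c>0$, uniform continuity of $K$ on a compact neighbourhood of the diagonal, and the fact that $E(z,r)$ shrinks uniformly in Euclidean diameter as $r\to0$. Integrating the $p$-th power of the pointwise bound against $d\lambda$ gives (iv), and Proposition \ref{prop:lattice-average} removes the dependence on the radius.

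Since (iv) returns to (i) through the equivalences above, the chain (i)$\Rightarrow$(v)$\Rightarrow$(iv)$\Rightarrow$(i) closes the cycle. The only genuine obstacle is the uniform near-diagonal lower bound for $|K|$ used in (v)$\Rightarrow$(iv). Everything else is bookkeeping of earlier results, where the constants depend only on $p$, $r$ and $\Omega$.
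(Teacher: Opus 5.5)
Your argument is correct once the lattice equivalence is attributed correctly, and it attaches (v) by a different route from the paper.

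\textbf{How the paper proceeds.} The paper proves this statement by translation. By \eqref{kernel-volume} and Lemma~\ref{lm-diagonalBegmankernel}, the integral in Theorem~\ref{XiaoToeplitz} is comparable to $\int_\Omega\widetilde\mu^p\,\dd\lambda$, so (i)$\Leftrightarrow$(v) \emph{is} Theorem~\ref{XiaoToeplitz}. The equivalence (i)$\Leftrightarrow$(ii) for $p\ge1$ is the lattice result of \cite{Xiao2026} recorded in the remark after Theorem~\ref{mainth1-3}. Finally, (ii)$\Leftrightarrow$(iii)$\Leftrightarrow$(iv) is Proposition~\ref{prop:lattice-average}.

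\textbf{How your route differs.} You use the external input only in its lattice form and attach (v) internally. You get (i)$\Rightarrow$(v) from Proposition~\ref{prop:jensen}, with constant $1$. You get (v)$\Rightarrow$(iv) from the near-diagonal lower bound, which is the argument of Lemma~\ref{lem:berezin-sufficient}. Your separate treatment of the compact set $\Omega\setminus U$ fills a point that lemma passes over, since \eqref{eq-lowerkernelestimates} is stated only on $U$. Your route never needs the integral form of the cited theorem. The paper's route is shorter and needs no lower kernel bound.

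\textbf{Two corrections.}

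First, rewriting Theorem~\ref{XiaoToeplitz} through \eqref{eq:XYY34} gives (i)$\Leftrightarrow$(v), not (i)$\Leftrightarrow$(ii); that is exactly the paper's step. Your cycle (i)$\Rightarrow$(v)$\Rightarrow$(iv)$\Leftrightarrow$(ii)$\Rightarrow$(i) needs (ii)$\Rightarrow$(i) for $p\ge1$. That implication must be cited as the lattice characterization of \cite{Xiao2026}, as in the remark after Theorem~\ref{mainth1-3}, not as Theorem~\ref{XiaoToeplitz} in the form quoted. Without it your cycle has no return arrow.

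Second, the suggested Minkowski extension of Proposition~\ref{prop:sufficiency} does not work. For $p\ge1$ the triangle inequality in $S_p$ only gives $\|\sum_jT_{\mu_j}\|_{S_p}\le\sum_j\|T_{\mu_j}\|_{S_p}\lesssim\sum_j\widehat\mu_r(a_j)$. That is an $\ell^1$ condition, and finite overlap cannot turn it into $\ell^p$.

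\textbf{A self-contained way to close the cycle.} If you want to use Theorem~\ref{XiaoToeplitz} only literally, prove (iv)$\Rightarrow$(v) instead.
Apply the submean property (Lemma~\ref{subharmonic}) to $|K(z,\cdot)|^2$ and use Fubini. This gives $\widetilde\mu\lesssim B\widehat\mu_r$, where $Bf(z)=\int_\Omega f(u)|K(z,u)|^2K(z,z)^{-1}\,\dd V(u)$.
The operator $B$ averages against a probability measure, and $\int_\Omega Bg\,\dd\lambda=\int_\Omega g\,\dd\lambda$. Jensen therefore yields $\int_\Omega\widetilde\mu^p\,\dd\lambda\lesssim\int_\Omega\widehat\mu_r^p\,\dd\lambda$ for $p\ge1$.
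Then (v)$\Rightarrow$(i) is Theorem~\ref{XiaoToeplitz} as stated.
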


\end{renewcommand}

The next theorem extends the  Berezin equivalence from the
range $p\geq1$ into the small-exponent range.
On the unweighted unit ball the same threshold is already known to be
optimal; see \cite{Zhu2007NJM} for details.  

For \(p>0\), by Section \ref{geometric-characterization}, we have showed that 
\begin{equation}\label{eq:all-p-lattice}
 T_\mu\in S_p
 \quad\Longleftrightarrow\quad
 \{K_j\mu(E(a_j,r))\}_j\in\ell^p
 \quad\Longleftrightarrow\quad
 \widehat\mu_r\in L^p(\dd\lambda).
\end{equation}
The next result shows that \eqref{eq:all-p-lattice} is equivalent to \(\widetilde{\mu}\in L^p(\Omega,d\lambda)\) for \(p>\frac{n}{n+1}\).
\begin{theorem}\label{thm:sharp-berezin}
Let \(\mu \) be a non-negative Borel measure on \(\mu\). If 
\begin{equation*}\label{eq:p-threshold}
 p>\frac{n}{n+1},
\end{equation*}
then
\begin{equation*}\label{eq:berezin-equivalence}
 T_\mu\in S_p(A^2(\Omega))
 \quad\Longleftrightarrow\quad
 \widetilde\mu\in L^p(\Omega,\dd\lambda).
\end{equation*}
Moreover,
\begin{equation*}\label{eq:berezin-norm-comparison}
 \|T_\mu\|_{S_p}^p
 \asymp
 \int_\Omega\widetilde\mu(z)^p\dd\lambda(z)
 \asymp
 \sum_j[K_j\mu(E(a_j,r))]^p.
\end{equation*}
\end{theorem}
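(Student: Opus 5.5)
Since \eqref{eq:all-p-lattice} is already established for all $p>0$, and the case $p\geq1$ is Theorem \ref{Xiaoessential}, I will treat only $\frac{n}{n+1}<p<1$. It suffices to prove the two-sided estimate
\[
\int_\Omega\widetilde\mu^p\,d\lambda\asymp\sum_j[K_j\mu(E(a_j,r))]^p .
\]

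\textbf{Lower bound for $\widetilde\mu$.} Fix $z$ and let $0<r<r_0$, where $r_0$ is the radius in \eqref{eq-lowerkernelestimates}. Restricting the integral to $E(z,r)$ and using \eqref{eq-lowerkernelestimates} (with its extension to points away from $\partial\Omega$, where $K\asymp1$ on compacts after shrinking $r$) together with \eqref{kernel-volume}, I get
\[
\widetilde\mu(z)\geq\frac1{K(z,z)}\int_{E(z,r)}|K(z,w)|^2\,d\mu(w)\gtrsim K(z,z)\,\mu(E(z,r))=\mathcal A_r\mu(z).
\]
Hence $\int\widehat\mu_r^p\,d\lambda\lesssim\int\widetilde\mu^p\,d\lambda$ for every $p>0$, and Proposition \ref{prop:lattice-average} converts the left side into the lattice sum. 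This direction is routine.

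\textbf{Upper bound for $\widetilde\mu$.} Decompose $\mu=\sum_i\mu|_{D_i}$ using the cells \eqref{eq:borel-cells}. For $w\in D_i\subset E(a_i,r)$, apply Lemma \ref{subharmonic} to the plurisubharmonic function $|K(z,\cdot)|^{2}$ (in $w$), or directly use the sup estimate in the spirit of \eqref{eq:yij}, to obtain
\[
\sup_{w\in D_i}|K(z,w)|^2\lesssim K_i\int_{E(a_i,R)}|K(z,u)|^2\,dV(u).
\]
This replaces a pointwise comparison of $K(z,\cdot)$ across a Kobayashi ball, which Example \ref{example1} shows may fail. Since $0<p<1$, subadditivity of $x\mapsto x^p$ gives
\[
\widetilde\mu(z)^p\leq\sum_i\Big(\frac{\mu(D_i)}{K(z,z)}\sup_{D_i}|K(z,\cdot)|^2\Big)^p .
\]
For each $i$ I will bound $\sup_{w\in D_i}|K(z,w)|^{2p}$ by the $p$-th power submean inequality of Lemma \ref{subharmonic}, applied to $|K(z,\cdot)|^{2p}$ (plurisubharmonic, since $\log|K(z,\cdot)|$ is), which yields
\[
\sup_{D_i}|K(z,\cdot)|^{2p}\lesssim K_i\int_{E(a_i,R)}|K(z,u)|^{2p}\,dV(u).
\]
Integrating against $d\lambda(z)=K(z,z)\,dV(z)$ and applying Tonelli, each term is bounded by
\[
(\mu(D_i))^pK_i\int_{E(a_i,R)}\Big(\int_\Omega|K(z,u)|^{2p}K(z,z)^{1-p}\,dV(z)\Big)dV(u).
\]

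\textbf{Main step: the Forelli--Rudin estimate.} I apply Lemma \ref{Forelli-Rudin} with $s=2p$ and $t=1-p$. Its hypotheses read $s=2p>\frac n{n+1}$, which holds, and $1-s<t$, i.e.\ $p>0$, which holds. The remaining hypothesis $t<\frac1{n+1}$ is equivalent to $p>\frac n{n+1}$, which is exactly the threshold in the theorem. The lemma gives the inner integral $\lesssim K(u,u)^{s+t-1}=K(u,u)^p\asymp K_i^p$ on $E(a_i,R)$. The resulting term is then
\[
(\mu(D_i))^pK_i\,|E(a_i,R)|\,K_i^p\asymp(\mu(D_i)K_i)^p\leq\widehat\mu_r(a_i)^p
\]
by \eqref{kernel-volume}. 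Summing over $i$ gives $\int\widetilde\mu^p\,d\lambda\lesssim\sum_i(\mu(D_i)K_i)^p\lesssim\sum_j[K_j\mu(E(a_j,r))]^p$.

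I expect this last step, checking that the admissible range of Lemma \ref{Forelli-Rudin} is met, to be the main obstacle. It is also where sharpness enters, since the integrability of $|K(z,u)|^{2p}K(z,z)^{1-p}$ fails for $p\leq\frac n{n+1}$. If the cited range of that lemma turns out to be stated slightly differently, I would fall back on splitting $\Omega$ into $E(u,R')$, where I use the kernel bound by $1/|Q|$, and its complement, where I use the off-diagonal bound $|K(z,u)|\lesssim|Q(z,d(z,u))|^{-1}$ and sum over dyadic shells $d(z,u)\sim2^k\delta(u)$. Combining the two bounds with \eqref{eq:all-p-lattice} and Theorem \ref{Xiaoessential} yields the stated equivalences and norm comparisons.
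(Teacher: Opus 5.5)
Your proposal is correct and follows essentially the same route as the paper. The paper also gets the lower bound $\widetilde\mu\gtrsim\mathcal A_r\mu$ from \eqref{eq-lowerkernelestimates} (Lemma \ref{lem:berezin-sufficient}), and the upper bound from the cell decomposition, subadditivity of $x\mapsto x^p$, the submean inequality for $|K(z,\cdot)|^{2p}$, and Lemma \ref{Forelli-Rudin} with $s=2p$, $t=1-p$ (Lemma \ref{lem:cell-berezin}). The only cosmetic difference is that the paper bounds the cell sum directly by $\|T_\mu\|_{S_p}^p$ via \eqref{eq:cell-bound} from the necessity proof, whereas you pass through the lattice sum and \eqref{eq:all-p-lattice}; this amounts to the same thing.
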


\begin{remark}
      For $p\geq1$ this is Theorem~\ref{Xiaoessential}.  The new argument is
needed only when $\frac{n}{n+1}<p<1$.
\end{remark}

We separate the two directions.

\begin{lemma}
\label{lem:berezin-sufficient}
Let \(\mu\) be non-negative Borel measure, for every $p>0$,
\begin{equation}\label{eq:berezin-suff-all-p}
 \widetilde\mu\in L^p(\dd\lambda)
 \quad\Longrightarrow\quad T_\mu\in S_p.
\end{equation}
\end{lemma}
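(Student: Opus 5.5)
The plan is to reduce the Berezin condition to the geometric condition already established in Section \ref{geometric-characterization}. We show the pointwise domination $\widehat\mu_r(z)\lesssim\widetilde\mu(z)$ for $r$ small, then invoke the equivalence \eqref{eq:all-p-lattice}, which holds for every $p>0$.

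\emph{Step 1: pointwise lower bound.} Fix $0<r<r_0$ with $r<r_*$, where $r_0$ is the radius in \eqref{eq-lowerkernelestimates}. For $z$ in the tubular neighbourhood $U$ and $w\in E(z,r)$, \eqref{eq-lowerkernelestimates} combined with \eqref{Kobayashivolume} and \eqref{kernel-volume} gives $|K(z,w)|\gtrsim K(z,z)$. Therefore
\[
\widetilde\mu(z)=\frac{1}{K(z,z)}\int_\Omega|K(z,w)|^2\,d\mu(w)\geq\frac{1}{K(z,z)}\int_{E(z,r)}|K(z,w)|^2\,d\mu(w)\gtrsim K(z,z)\,\mu(E(z,r))\asymp\widehat\mu_r(z),
\]
where the last comparison is \eqref{average-area}.

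For $z$ in the compact set $\Omega\setminus U$ we need the same bound, and this is the only delicate point. The concern is that $\Omega$ may fail to be Lu Qi-Keng, as Example \ref{example1} shows. We argue locally instead: $K(z,z)>0$ and $K$ is continuous, so $|K(z,w)|\geq\frac12 K(z,z)$ whenever $|z-w|<\epsilon(z)$. By compactness of $\Omega\setminus U$ and uniform continuity of $K$ on a compact neighbourhood, $\epsilon$ can be chosen uniformly. On this compact set the Kobayashi and Euclidean metrics are comparable, so after shrinking $r$ the ball $E(z,r)$ lies in that Euclidean neighbourhood, and the bound $|K(z,w)|\gtrsim K(z,z)$ holds uniformly there as well. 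Hence $\widehat\mu_r(z)\lesssim\widetilde\mu(z)$ for all $z\in\Omega$.

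\emph{Step 2: conclusion.} Raising the bound to the power $p$ and integrating against $d\lambda$ gives
\[
\int_\Omega\widehat\mu_r(z)^p\,d\lambda(z)\lesssim\int_\Omega\widetilde\mu(z)^p\,d\lambda(z).
\]
The radius is immaterial by Lemma \ref{lem:continuous-radius}. Proposition \ref{prop:lattice-average} converts the left side into $\sum_j\widehat\mu_\rho(a_j)^p$. Proposition \ref{prop:sufficiency} (for $p\le1$), or Theorem \ref{Xiaoessential} (for $p\ge1$), then yields $T_\mu\in S_p$ with
\[
\|T_\mu\|_{S_p}^p\lesssim\int_\Omega\widetilde\mu^p\,d\lambda.
\]

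The main obstacle is the uniformity of the near-diagonal lower bound $|K(z,w)|\gtrsim K(z,z)$ on small Kobayashi balls over all of $\Omega$. Near the boundary it is supplied by \eqref{eq-lowerkernelestimates}; in the interior it rests on the compactness argument above.
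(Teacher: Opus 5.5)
Your proof is correct and follows the paper's argument. You obtain the pointwise bound $\widetilde\mu(z)\gtrsim K(z,z)\mu(E(z,r))=\mathcal A_r\mu(z)$ from the near-diagonal kernel estimate \eqref{eq-lowerkernelestimates} and then invoke the geometric characterization \eqref{eq:all-p-lattice}. Your compactness argument for $z\in\Omega\setminus U$ is a worthwhile addition, since the paper applies \eqref{eq-lowerkernelestimates} (stated only for $z\in U$) to all $z\in\Omega$ without comment.
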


\begin{proof}
By \eqref{eq-lowerkernelestimates}, for $r$ small enough,  then
\[
 \begin{split}
 \widetilde\mu(z)&=\langle T_\mu k_z,k_z\rangle\\
 &\geq \frac1{K(z,z)}
       \int_{E(z,r)}|K(z,w)|^2\dd\mu(w)\\
 &\gtrsim K(z,z)\mu(E(z,r))=\mathcal A_r\mu(z).
 \end{split}
\]
Hence $\mathcal A_r\mu\in L^p(\dd\lambda)$, and
\eqref{eq:all-p-lattice} gives $T_\mu\in S_p$.
\end{proof}

The reverse implication below is the point at which the critical
exponent appears.

\begin{lemma}\label{lem:cell-berezin}
Let 
\[
 \frac{n}{n+1}<p\leq1.
\]
\(\mu\) be a non-negative Borel measure on \(\Omega\), put
$\mu_i=\mu|_{D_i}$, $m_i=\mu(D_i)$, and $K_i=K(a_i,a_i)$.  Then
\begin{equation*}\label{eq:cellwise-berezin}
 \int_\Omega\widetilde{\mu_i}(z)^p\dd\lambda(z)
 \lesssim (m_iK_i)^p.
\end{equation*}
\end{lemma}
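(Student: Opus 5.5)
The plan is to estimate $\widetilde{\mu_i}(z)$ pointwise by the kernel evaluated at the lattice point $a_i$, and then integrate using the Forelli--Rudin estimate of Lemma \ref{Forelli-Rudin}.

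By definition,
\[
 \widetilde{\mu_i}(z)=\frac{1}{K(z,z)}\int_{D_i}|K(z,w)|^2\,d\mu(w)\leq \frac{m_i}{K(z,z)}\sup_{w\in E(a_i,r)}|K(z,w)|^2 .
\]
For fixed $z$, the function $w\mapsto |K(z,w)|^{2p}$ is a continuous non-negative plurisubharmonic function, since $\overline{K(z,\cdot)}$ is conjugate holomorphic and $|K(z,w)|=|K(w,z)|$. Lemma \ref{subharmonic}, with $R=(1+r)/2$, gives
\[
 \sup_{w\in E(a_i,r)}|K(z,w)|^{2p}\lesssim \frac{1}{|E(a_i,r)|}\int_{E(a_i,R)}|K(z,w)|^{2p}\,dV(w)\asymp K_i\int_{E(a_i,R)}|K(z,w)|^{2p}\,dV(w),
\]
where the last comparison uses \eqref{kernel-volume}. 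Raising the pointwise bound to the power $p$ and integrating against $d\lambda(z)=K(z,z)\,dV(z)$, we then apply Tonelli:
\[
 \int_\Omega \widetilde{\mu_i}(z)^p\,d\lambda(z)\lesssim m_i^pK_i\int_{E(a_i,R)}\left(\int_\Omega |K(z,w)|^{2p}K(z,z)^{1-p}\,dV(z)\right)dV(w).
\]

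The inner integral is $I_{s,t}(w)$ with $s=2p$ and $t=1-p$. Checking the hypotheses of Lemma \ref{Forelli-Rudin} is the step where the threshold enters. The condition $s>\frac{n}{n+1}$ holds since $2p>p>\frac{n}{n+1}$. The condition $t>1-s$ reads $1-p>1-2p$, which is true. The condition $t<\frac{1}{n+1}$ reads $1-p<\frac1{n+1}$, i.e. $p>\frac{n}{n+1}$, which is exactly our hypothesis. Hence $I_{2p,1-p}(w)\lesssim K(w,w)^{2p+1-p-1}=K(w,w)^{p}$.

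By Lemma \ref{lm-diagonalBegmankernel}, $K(w,w)\asymp K_i$ on $E(a_i,R)$, and $|E(a_i,R)|\asymp K_i^{-1}$ by \eqref{kernel-volume}. Therefore
\[
 \int_\Omega \widetilde{\mu_i}^p\,d\lambda\lesssim m_i^pK_i\cdot K_i^{p}\cdot K_i^{-1}=(m_iK_i)^p .
\]
All constants depend only on $r$, $p$ and $\Omega$.

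The main obstacle is making sure the Forelli--Rudin range is met. A cruder route would bound $\sup_w|K(z,w)|^2$ directly by $|K(z,a_i)|^2$, but this fails on non-Lu Qi-Keng domains (Example \ref{example1}). The subharmonic averaging over $E(a_i,R)$ avoids that difficulty, and the only remaining requirement is the exponent condition $1-p<\frac1{n+1}$. This is precisely where $p>\frac{n}{n+1}$ is needed, which matches the sharpness claimed for the threshold.
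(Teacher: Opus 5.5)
Your proof is correct and follows the paper's argument step for step. You bound $\widetilde{\mu_i}(z)$ pointwise by $\frac{m_i}{K(z,z)}\sup_{w\in E(a_i,r)}|K(z,w)|^2$, apply the submean inequality of Lemma \ref{subharmonic} to $|K(z,\cdot)|^{2p}$ on $E(a_i,R)$, use Tonelli, invoke Lemma \ref{Forelli-Rudin} with $s=2p$, $t=1-p$ (where $1-p<\frac{1}{n+1}$ is exactly $p>\frac{n}{n+1}$), and finish with the local count $K_i\cdot K_i^{p}\cdot K_i^{-1}$. One harmless wording slip: $K(z,\cdot)$ is the conjugate-holomorphic function, and $\overline{K(z,\cdot)}=K(\cdot,z)$ is holomorphic, which is what makes $|K(z,\cdot)|^{2p}$ plurisubharmonic.
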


\begin{proof}
Since $D_i\subset E(a_i,r)$,
\begin{align*}
\widetilde{\mu_i}(z) 
&= \frac{1}{K(z, z)} \int_{D_i} |K(z, w)|^2 \, d\mu(w) \\
&\leq \frac{m_i}{K(z, z)} \sup_{w \in D_i} |K(z, w)|^2 \\
&\leq \frac{m_i}{K(z, z)} \sup_{w \in E(a_i, r)} |K(z, w)|^2.
\end{align*}
Hence,
\begin{equation}\label{eq:mu-i-pointwise}
 \widetilde{\mu_i}(z)^p
 \leq m_i^pK(z,z)^{-p}
      \sup_{w\in E(a_i,r)}|K(z,w)|^{2p}.
\end{equation}
For fixed $z$, the function
$w\mapsto |K(z,w)|^{2p}$ is non-negative and plurisubharmonic (with the
usual harmless conjugation according to the kernel convention).
The local submean inequality, applied directly to this $2p$-power,
gives for a fixed $R>r$,
\begin{equation}\label{eq:submean-2p}
 \sup_{w\in E(a_i,r)}|K(z,w)|^{2p}
 \lesssim K_i\int_{E(a_i,R)}|K(z,u)|^{2p}\dd V(u).
\end{equation}

Multiply \eqref{eq:mu-i-pointwise} by $K(z,z)$ and integrate.  By
\eqref{eq:submean-2p} and Fubini's theorem,
\begin{equation}\label{eq:cellwise-intermediate}
 \int_\Omega\widetilde{\mu_i}(z)^p\dd\lambda(z)
 \lesssim m_i^pK_i\int_{E(a_i,R)}J_p(u)\dd V(u),
\end{equation}
where 
\[J_p(u)=\int_\Omega
 |K(z,u)|^{2p}K(z,z)^{1-p}\dd V(z).\]
Apply Lemma \ref{Forelli-Rudin} with $s=2p$, $t=1-p$.

The conditions in Lemma \ref{Forelli-Rudin} become
\[
 2p>\frac n{n+1},
 \qquad
 1-2p<1-p,
 \qquad
 1-p<\frac1{n+1}.
\]
The middle inequality is equivalent to $p>0$, and the last inequality
is precisely $p>n/(n+1)$. 
Therefore
\begin{equation}\label{eq:Jp}
 J_p(u)\lesssim K(u,u)^{2p+1-p-1}=K(u,u)^p.
\end{equation}
Using Lemma \ref{lm-diagonalBegmankernel}, \eqref{kernel-volume} and combine with
\eqref{eq:cellwise-intermediate},
\[
 \begin{split}
 \int_\Omega\widetilde{\mu_i}(z)^p\dd\lambda(z)
 &\lesssim m_i^pK_i
       \int_{E(a_i,R)}K(u,u)^p\dd V(u)\\
 &\lesssim m_i^pK_i\,K_i^pK_i^{-1}
 =(m_iK_i)^p.
 \end{split}
\]
\end{proof}

\begin{proof}[Proof of Theorem~\ref{thm:sharp-berezin}]
The range $p\geq1$ follows from Theorem~\ref{Xiaoessential}.  Suppose
$n/(n+1)<p<1$.

The implication from Berezin integrability to $S_p$ is
Lemma~\ref{lem:berezin-sufficient}.  Conversely, suppose
$T_\mu\in S_p$.  Decompose $\mu=\sum_i\mu_i$.
Since all terms are non-negative and $0<p<1$,
\[
 \widetilde\mu(z)^p
 =\left(\sum_i\widetilde{\mu_i}(z)\right)^p
 \leq\sum_i\widetilde{\mu_i}(z)^p.
\]
Lemma~\ref{lem:cell-berezin} and Fubini's theorem yields
\[
 \int_\Omega\widetilde\mu(z)^p\dd\lambda(z)
 \lesssim\sum_i[\mu(D_i)K_i]^p.
\]
The proof in the proposition \ref{prop:necessity} gives
\[
 \sum_i[\mu(D_i)K_i]^p\lesssim\|T_\mu\|_{S_p}^p.
\]
\end{proof}

The next result shows that the condition \(p>\frac{n}{n+1}\) in Theorem~\ref{thm:sharp-berezin}  is sharp on bounded smoothly convex domains of finite type in \(\C^n\).

\begin{proposition}\label{prop:rank-one}
On the unweighted Bergman space of $\B^n$, let $\mu=\delta_0$.  Then
$T_\mu$ has rank one and hence belongs to $S_p$ for every $p>0$, while
\begin{equation}\label{eq:rank-one-threshold}
 \widetilde\mu\in L^p(\B^n,\dd\lambda)
 \quad\Longleftrightarrow\quad
 p>\frac n{n+1}.
\end{equation}
At the endpoint the integral diverges.
\end{proposition}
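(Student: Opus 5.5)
The plan is a direct computation on the unit ball $\B^n$, where $K(z,w)=\frac{1}{(1-\langle z,w\rangle)^{n+1}}$ with Lebesgue measure normalized so that $|\B^n|=1$ (other normalizations change only constants). For $\mu=\delta_0$ we have
\[
T_\mu f(z)=\int f(\zeta)K(z,\zeta)\,d\delta_0(\zeta)=f(0)K(z,0)=f(0)=\langle f,K_0\rangle K_0 ,
\]
since $K(z,0)\equiv 1$. Hence $T_\mu=K_0\otimes K_0$ is a positive rank-one operator. Its single nonzero singular value is $\|K_0\|^2=K(0,0)=1$, so $\|T_\mu\|_{S_p}^p=1$ for every $p>0$. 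This settles the first assertion.

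Next I compute the Berezin transform. We have
\[
\widetilde\mu(z)=\langle T_\mu k_z,k_z\rangle=\frac{|K(z,0)|^2}{K(z,z)}=(1-|z|^2)^{n+1}.
\]
Moreover $d\lambda(z)=K(z,z)\,dV(z)=(1-|z|^2)^{-(n+1)}dV(z)$. Therefore
\[
\int_{\B^n}\widetilde\mu(z)^p\,d\lambda(z)=\int_{\B^n}(1-|z|^2)^{(n+1)(p-1)}\,dV(z).
\]
Passing to polar coordinates, $dV=2n\,t^{2n-1}\,dt\,d\sigma$. The integral becomes a constant multiple of
\[
\int_0^1 t^{2n-1}(1-t^2)^{(n+1)(p-1)}\,dt .
\]
Near $t=0$ there is no singularity. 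Near $t=1$ we have $1-t^2\asymp 1-t$, so the integral converges if and only if $(n+1)(p-1)>-1$, that is, $p>1-\frac{1}{n+1}=\frac{n}{n+1}$. At the endpoint $p=\frac n{n+1}$ the exponent is exactly $-1$, and $\int_0^1(1-t)^{-1}\,dt=\infty$, so the integral diverges. This gives \eqref{eq:rank-one-threshold} together with divergence at the endpoint.

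There is no real obstacle here; the only care needed is in the normalizations. One must confirm that $K(z,0)\equiv K(0,0)$ is constant, which holds because the ball is circular. One must also check that the paper's $d\lambda$ equals $K(z,z)\,dV$, so that the exponent $(n+1)(p-1)$ comes out exactly as computed. To connect the result to the sharpness claim in Theorem~\ref{thm:sharp-berezin}, I would remark that $\B^n$ is a smoothly bounded convex domain of finite type (type $2$). Consequently, for $p\le\frac n{n+1}$ the implication $T_\mu\in S_p\Rightarrow\widetilde\mu\in L^p(d\lambda)$ fails within the class of domains considered.
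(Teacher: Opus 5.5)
Your proposal is correct and follows essentially the same route as the paper. You observe that $T_{\delta_0}f=f(0)K(\cdot,0)$ is rank one, compute $\widetilde{\delta_0}(z)=|K(z,0)|^2/K(z,z)\asymp(1-|z|^2)^{n+1}$, and reduce $\int_{\B^n}\widetilde{\delta_0}^{\,p}\,d\lambda$ to $\int_{\B^n}(1-|z|^2)^{(n+1)(p-1)}\,dV$, which is finite exactly when $p>\frac{n}{n+1}$; your normalization $|\B^n|=1$, the explicit singular value, and the polar-coordinate treatment of the endpoint only make the paper's sketch more explicit.
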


\begin{proof}
Evaluation at the origin gives
$T_{\delta_0}f=f(0)K(\cdot,0)$, a rank-one operator.  Since
\[
 K_{\B^n}(z,w)=c_n(1-\langle z,w\rangle)^{-(n+1)},
\]
we have
\[
 \widetilde{\delta_0}(z)
 =\frac{|K(z,0)|^2}{K(z,z)}
 \asymp(1-|z|^2)^{n+1}
\]
and
\[
 \dd\lambda(z)\asymp
 (1-|z|^2)^{-(n+1)}\dd V(z).
\]
Consequently,
\[
 \int_{\B^n}\widetilde{\delta_0}(z)^p\dd\lambda(z)
 \asymp
 \int_{\B^n}(1-|z|^2)^{(n+1)(p-1)}\dd V(z),
\]
which is finite exactly when
$(n+1)(p-1)>-1$, or equivalently $p>n/(n+1)$.
\end{proof}

\begin{remark}
Proposition~\ref{prop:rank-one} proves sharpness for the class of all domains under consideration, because $\B^n$ belongs to that class.  It does not rule out a better domain-specific threshold on a particular weakly pseudoconvex finite-type domain.  Such an improvement would
require a sharper domain-specific replacement for Lemma \ref{Forelli-Rudin}.
\end{remark}

\subsection{Higher-order Berezin-type transform}

\quad\, The ordinary Berezin transform does not, in general, characterize \(S_p\) when \(0<p\leq n/(n+1)\), to obtain integral criteria in this range, we introduce higher-order Berezin-type transforms, we will show that an appropriate choice of order yields a Berezin-type criterion for every \(p>0\), with the ordinary transform recovered as the first-order case.

  For an integer $N\geq1$, define
\begin{equation}\label{eq:BN}
 \mathcal B_N\mu(z)
 =K(z,z)^{1-2N}
  \int_\Omega|K(z,w)|^{2N}\dd\mu(w).
\end{equation}
Thus $\mathcal B_1\mu=\widetilde\mu$.  If
\[
 g_{z,N}(w)=\frac{K(w,z)^N}{\|K(\cdot,z)^N\|_{A^2}},
\]
then Lemma \ref{Forelli-Rudin} give
\[
 \|K(\cdot,z)^N\|_{A^2}^2\asymp K(z,z)^{2N-1},
\]
and therefore
\begin{equation}\label{eq:BN-coherent}
 \langle T_\mu g_{z,N},g_{z,N}\rangle
 \asymp\mathcal B_N\mu(z).
\end{equation}
Thus \eqref{eq:BN} is genuinely a higher-order Berezin-type test.
On the unweighted unit ball it agrees, up to normalization, with the
generalized Berezin transform of Pau~\cite{Pau2014} after setting
$t=(N-1)(n+1)$; the condition below then becomes Pau's condition
$p(n+1+2t)>n$.

\begin{theorem}\label{thm:higher-berezin}
Let \(\mu\) be a non-negative Borel measure on \(\Omega\).
Assume $0<p\leq1$ and choose an integer $N\geq1$ such that
\begin{equation}\label{eq:N-threshold}
 p(2N-1)>\frac n{n+1},
\end{equation}
then 
\begin{equation}\label{eq:higher-equivalence}
 T_\mu\in S_p
 \quad\Longleftrightarrow\quad
 \mathcal B_N\mu\in L^p(\Omega,\dd\lambda),
\end{equation}
\end{theorem}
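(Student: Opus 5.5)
The plan mirrors the proof of Theorem~\ref{thm:sharp-berezin}, with $|K(z,w)|^2$ replaced by $|K(z,w)|^{2N}$ and the weight $K(z,z)^{-1}$ by $K(z,z)^{1-2N}$. Both directions go through the lattice characterization \eqref{eq:all-p-lattice}, namely $T_\mu\in S_p$ if and only if $\sum_j[K_j\mu(E(a_j,r))]^p<\infty$, which is valid for every $p>0$ by Section~\ref{geometric-characterization}.

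\emph{Sufficiency} ($\mathcal B_N\mu\in L^p(\dd\lambda)\Rightarrow T_\mu\in S_p$). Restrict the integral in \eqref{eq:BN} to $E(z,r)$ with $r$ small. By the lower bound \eqref{eq-lowerkernelestimates}, $|K(z,w)|\gtrsim K(z,z)$ for $w\in E(z,r)$; strictly, \eqref{eq-lowerkernelestimates} is stated for $z\in U$, and the interior compact part is handled by continuity and positivity of $K$ near the diagonal. Hence
\[
\mathcal B_N\mu(z)\gtrsim K(z,z)^{1-2N}K(z,z)^{2N}\mu(E(z,r))=\mathcal A_r\mu(z).
\]
Therefore $\mathcal A_r\mu\in L^p(\dd\lambda)$, and \eqref{eq:all-p-lattice} together with Proposition~\ref{prop:lattice-average} gives $T_\mu\in S_p$. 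This direction holds for any $N\geq1$ and any $p>0$.

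\emph{Necessity} ($T_\mu\in S_p\Rightarrow\mathcal B_N\mu\in L^p(\dd\lambda)$). Decompose $\mu=\sum_i\mu_i$ with $\mu_i=\mu|_{D_i}$, using the cells \eqref{eq:borel-cells}. Since $\mathcal B_N$ is additive in $\mu$ and $0<p\leq1$, we have $\mathcal B_N\mu(z)^p\leq\sum_i\mathcal B_N\mu_i(z)^p$. It therefore suffices to prove the cellwise bound
\[
\int_\Omega\mathcal B_N\mu_i(z)^p\dd\lambda(z)\lesssim (m_iK_i)^p .
\]
Once this is established, we sum over $i$ and invoke \eqref{eq:cell-bound} from the proof of Proposition~\ref{prop:necessity}, which gives $\sum_i(m_iK_i)^p\lesssim\|T_\mu\|_{S_p}^p$. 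The case $p=1$ follows from the same cellwise estimate, with the subadditivity step becoming equality.

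For the cellwise bound, proceed as in Lemma~\ref{lem:cell-berezin}. First,
\[
\mathcal B_N\mu_i(z)^p\leq m_i^pK(z,z)^{p(1-2N)}\sup_{w\in E(a_i,r)}|K(z,w)|^{2Np}.
\]
Next apply the submean inequality of Lemma~\ref{subharmonic} to the plurisubharmonic function $w\mapsto|K(z,w)|^{2Np}$, obtaining
\[
\sup_{E(a_i,r)}|K(z,\cdot)|^{2Np}\lesssim K_i\int_{E(a_i,R)}|K(z,u)|^{2Np}\dd V(u).
\]
Multiply by $K(z,z)$, integrate in $z$, and use Tonelli. This reduces the problem to bounding
\[
J(u)=\int_\Omega|K(z,u)|^{2Np}K(z,z)^{1+p-2Np}\dd V(z).
\]

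The key step, which I expect to be the only real obstacle, is to check that Lemma~\ref{Forelli-Rudin} applies with $s=2Np$ and $t=1+p-2Np$. The three conditions are as follows.
\begin{itemize}
\item $t>1-s$ amounts to $p>0$.
\item $t<\frac1{n+1}$ amounts to $p(2N-1)>1-\frac1{n+1}=\frac n{n+1}$, which is exactly \eqref{eq:N-threshold}.
\item $s>\frac n{n+1}$ follows from $2Np>p(2N-1)$.
\end{itemize}
The lemma then gives $J(u)\lesssim K(u,u)^{s+t-1}=K(u,u)^{p}$. By Lemma~\ref{lm-diagonalBegmankernel} and \eqref{kernel-volume},
\[
\int_{\Omega}\mathcal B_N\mu_i^p\dd\lambda\lesssim m_i^pK_i\int_{E(a_i,R)}K(u,u)^p\dd V(u)\lesssim m_i^pK_iK_i^{p}K_i^{-1}=(m_iK_i)^p,
\]
which completes the argument. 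Along the way I would also record the norm equivalence $\|T_\mu\|_{S_p}^p\asymp\int_\Omega\mathcal B_N\mu(z)^p\dd\lambda(z)$, with constants independent of $\mu$.
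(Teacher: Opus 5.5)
Your proposal is correct and follows essentially the same route as the paper: sufficiency via the near-diagonal lower bound \eqref{eq-lowerkernelestimates} giving $\mathcal B_N\mu\gtrsim\mathcal A_r\mu$, and necessity via the cellwise submean estimate combined with Lemma~\ref{Forelli-Rudin} at $s=2Np$, $t=1+p(1-2N)$, then summed using $p$-subadditivity and \eqref{eq:cell-bound}. The only point to tidy is the endpoint $p=1$: the paper proves \eqref{eq:cell-bound} only for $p<1$, but at $p=1$ it follows at once from $\|T_\mu\|_{S_1}=\Tr T_\mu=\int_\Omega K(w,w)\dd\mu(w)$ together with Lemma~\ref{lm-diagonalBegmankernel}.
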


\begin{proof}
 \eqref{eq-lowerkernelestimates} gives
\[
 \mathcal B_N\mu(z)\gtrsim K(z,z)\mu(E(z,r))
 =\mathcal A_r\mu(z).
\]
Thus $\mathcal B_N\mu\in L^p(\dd\lambda)$ implies the lattice
condition and hence $T_\mu\in S_p$.

For the reverse implication, repeat the cellwise proof of
Lemma~\ref{lem:cell-berezin}.  The relevant inner integral becomes
\[
 J_{N,p}(u)=\int_\Omega
 |K(z,u)|^{2Np}K(z,z)^{1+p(1-2N)}\dd V(z).
\]
Apply  Lemma \ref{Forelli-Rudin} with
\begin{equation}\label{eq:higher-st}
 s=2Np,
 \qquad t=1+p(1-2N).
\end{equation}
The similarly condition in the proof of Lemma \ref{lem:cell-berezin}
\[
 1+p(1-2N)<\frac1{n+1},
\]
which is exactly \eqref{eq:N-threshold}.  The output exponent is
\[
 s+t-1=2Np+1+p(1-2N)-1=p,
\]
so $J_{N,p}(u)\lesssim K(u,u)^p$.  The remaining local-volume
calculation is unchanged and gives
\[
 \int_\Omega(\mathcal B_N\mu_i)^p\dd\lambda
 \lesssim[\mu(D_i)K_i]^p.
\]
Then the arguing as in the proof of theorem \ref{thm:sharp-berezin} obtain the conclusion.
\end{proof}

\section{Weighted composition operators}
\label{weighted composition operators}
We recall that $\phi:\Omega\to\Omega$ be holomorphic and let
$u\in A^2(\Omega)$.  Define
\[
 W_{u,\phi}f=u(f\circ\phi)
\]
and the pullback measure
\begin{equation}\label{eq:pullback}
 \mu_{u,\phi}(B)
 =\int_{\phi^{-1}(B)}|u(\zeta)|^2\dd V(\zeta).
\end{equation}
Then by \cite[Theorem 1.26]{Zhu2007}
\begin{equation}\label{eq:W-star-W}
 W_{u,\phi}^*W_{u,\phi}=T_{\mu_{u,\phi}},
 \qquad
 W_{u,\phi}\in S_q
 \Longleftrightarrow
 T_{\mu_{u,\phi}}\in S_{q/2}.
\end{equation}

We now apply the preceding Schatten class criteria to weighted composition operators on \(A^2(\Omega)\). The connection is provided by the identity \(W_{u,\phi}^{*}W_{u,\phi}=T_{\mu_{u,\phi}}\), where \(\mu_{u,\phi}\) is the positive measure induced by the weight \(u\) and the symbol \(\phi\). This reduces Schatten membership for \(W_{u,\phi}\) at exponent \(q\) to the corresponding Toeplitz problem at exponent $\frac{q}{2}$. Consequently, \eqref{eq:W-star-W}, Theorem \ref{thm:main}, Theorem \ref{XiaoComposition} and Theorem \ref{Xiaoessential}, the lattice and Berezin-type criteria yield geometric and integral characterizations for weighted composition operators.

\begin{corollary}\label{cor:composition-lattice}
 For every $q>0$,
\begin{equation*}\label{eq:composition-lattice}
 W_{u,\phi}\in S_q
 \quad\Longleftrightarrow\quad
 \sum_j
 \left[
   K_j\int_{\phi^{-1}(E(a_j,r))}|u(\zeta)|^2\dd V(\zeta)
 \right]^{q/2}<\infty.
\end{equation*}
\end{corollary}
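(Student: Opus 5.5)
The plan is to reduce the statement to the Toeplitz case through the identity \eqref{eq:W-star-W}, and then apply Theorem \ref{thm:main} with exponent $p=q/2$ and measure $\mu=\mu_{u,\phi}$ from \eqref{eq:pullback}.

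First we check that $W_{u,\phi}^*W_{u,\phi}=T_{\mu_{u,\phi}}$ holds in the present setting. For $f,g\in A^2(\Omega)$, the change-of-variables formula for pushforward measures gives
\[
\langle W_{u,\phi}f,W_{u,\phi}g\rangle=\int_\Omega |u|^2 (f\circ\phi)\overline{(g\circ\phi)}\dd V=\int_\Omega f\bar g\,d\mu_{u,\phi}=\langle T_{\mu_{u,\phi}}f,g\rangle .
\]
The last equality uses the reproducing property and Fubini's theorem. These are justified because $\int|f|^2d\mu_{u,\phi}=\|W_{u,\phi}f\|^2$, so at least when $W_{u,\phi}$ is bounded, $\mu_{u,\phi}$ is a Carleson measure and $T_{\mu_{u,\phi}}$ is bounded.

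If $W_{u,\phi}$ is not assumed bounded, we handle the direction ``lattice sum finite $\Rightarrow W_{u,\phi}\in S_q$'' as follows. Finiteness of the lattice sum with exponent $q/2$ implies that $\sup_j \widehat{\mu}_r(a_j)<\infty$. By the Carleson-measure characterization implicit in Lemma \ref{singularvaluedecay} together with finite overlap, the quadratic form $\int|f|^2d\mu_{u,\phi}$ is then bounded. Alternatively, the sufficiency argument of Proposition \ref{prop:sufficiency} directly builds a bounded positive operator $S$ whose quadratic form is $\int |f|^2d\mu_{u,\phi}=\|W_{u,\phi}f\|^2$. In either case $W_{u,\phi}$ is bounded and $W_{u,\phi}^*W_{u,\phi}=T_{\mu_{u,\phi}}$.

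Next, by property (v) of Section \ref{sec-Schatten}, $s_k(T_{\mu_{u,\phi}})=s_k(W_{u,\phi})^2$. Hence
\[
\|W_{u,\phi}\|_{S_q}^q=\sum_k s_k(W_{u,\phi})^q=\sum_k s_k(T_{\mu_{u,\phi}})^{q/2}=\|T_{\mu_{u,\phi}}\|_{S_{q/2}}^{q/2},
\]
so $W_{u,\phi}\in S_q$ if and only if $T_{\mu_{u,\phi}}\in S_{q/2}$.

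We then invoke Theorem \ref{thm:main}, (i)$\Leftrightarrow$(iii), with $p=q/2>0$. For $p<1$ this is Theorem \ref{mainth1-3}, built from Propositions \ref{prop:sufficiency} and \ref{prop:necessity}; for $p\geq1$ it is Theorem \ref{Xiaoessential}. This gives $T_{\mu_{u,\phi}}\in S_{q/2}$ if and only if
\[
\sum_j\bigl[\mu_{u,\phi}(E(a_j,r))K_j\bigr]^{q/2}<\infty .
\]
By definition, $\mu_{u,\phi}(E(a_j,r))=\int_{\phi^{-1}(E(a_j,r))}|u|^2\dd V$, which is exactly the asserted condition. The norm comparison \eqref{eq:main-quantitative} also yields $\|W_{u,\phi}\|_{S_q}^q\asymp\sum_j[\cdots]^{q/2}$.

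The only genuine obstacle is the boundedness issue: identifying $W_{u,\phi}^*W_{u,\phi}$ with $T_{\mu_{u,\phi}}$ before we know $W_{u,\phi}$ is bounded. We resolve it through the quadratic-form identification already carried out in \eqref{operatorconvergence}, applied to $\mu_{u,\phi}$. The remaining steps are direct substitutions, with $r<r_*$ fixed as in Theorem \ref{thm:main}.
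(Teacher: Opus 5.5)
Your proposal is correct and takes the same route as the paper. The paper derives this corollary from $W_{u,\phi}^*W_{u,\phi}=T_{\mu_{u,\phi}}$ and $W_{u,\phi}\in S_q\Leftrightarrow T_{\mu_{u,\phi}}\in S_{q/2}$ in \eqref{eq:W-star-W}, then applies (i)$\Leftrightarrow$(iii) of Theorem \ref{thm:main} with $p=q/2$ and $\mu=\mu_{u,\phi}$; your extra step showing that $W_{u,\phi}$ is bounded (via $\|W_{u,\phi}f\|^2=\int|f|^2\,d\mu_{u,\phi}$ and the Carleson bound from $\sup_j\widehat{\mu}_r(a_j)<\infty$, most cleanly obtained from Lemma \ref{subharmonic} plus finite overlap) is sound and fills in a point the paper leaves implicit.
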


\begin{corollary}\label{cor:composition-berezin}
Let
\begin{equation*}\label{eq:q-threshold}
q>\frac{2n}{n+1}.
\end{equation*}
Then
$W_{u,\phi}\in S_q$ if and only if
\begin{equation*}\label{eq:composition-berezin}
 \int_\Omega
 \left[
  \frac1{K(z,z)}
  \int_\Omega|K(z,\phi(\zeta))|^2
                 |u(\zeta)|^2\dd V(\zeta)
 \right]^{q/2}\dd\lambda(z)<\infty.
\end{equation*}
\end{corollary}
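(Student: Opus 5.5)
The plan is to reduce everything to the Toeplitz case through the identity \eqref{eq:W-star-W}, that is, $W_{u,\phi}^*W_{u,\phi}=T_{\mu_{u,\phi}}$ with $W_{u,\phi}\in S_q$ if and only if $T_{\mu_{u,\phi}}\in S_{q/2}$. I will then read off the Berezin criterion of Theorem~\ref{thm:sharp-berezin}, applied at the exponent $p=q/2$.

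\emph{Threshold.} The hypothesis $q>\frac{2n}{n+1}$ is exactly $p=q/2>\frac{n}{n+1}$. So Theorem~\ref{thm:sharp-berezin} applies to $\mu=\mu_{u,\phi}$, provided this is a non-negative Borel measure. For $p\geq1$ one may instead quote Theorem~\ref{Xiaoessential}, or equivalently Theorem~\ref{XiaoComposition} when $q\geq2$.

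\emph{Compute the Berezin transform.} By the definition \eqref{eq:pullback} and the change-of-variables (pushforward) formula, every non-negative Borel $g$ on $\Omega$ satisfies
\[
\int_\Omega g\,d\mu_{u,\phi}=\int_\Omega g(\phi(\zeta))|u(\zeta)|^2\,dV(\zeta).
\]
First verify this for indicator functions $g=\chi_B$, where it is the definition, and then pass to simple functions and monotone limits. Taking $g(w)=|K(z,w)|^2/K(z,z)$ gives
\[
\widetilde{\mu_{u,\phi}}(z)=\frac1{K(z,z)}\int_\Omega|K(z,\phi(\zeta))|^2|u(\zeta)|^2\,dV(\zeta).
\]
This is exactly the bracket in the corollary. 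Hence the displayed integral equals $\int_\Omega\widetilde{\mu_{u,\phi}}^{\,q/2}\,d\lambda$.

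\emph{Conclude.} Theorem~\ref{thm:sharp-berezin} gives $T_{\mu_{u,\phi}}\in S_{q/2}$ if and only if $\widetilde{\mu_{u,\phi}}\in L^{q/2}(d\lambda)$. Combined with \eqref{eq:W-star-W}, this is the claim, and it also yields the norm comparison $\|W_{u,\phi}\|_{S_q}^q\asymp\int\widetilde{\mu_{u,\phi}}^{q/2}d\lambda$.

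\emph{Main obstacle.} The only delicate point is justifying \eqref{eq:W-star-W} when $W_{u,\phi}$ is not yet known to be bounded. If $\widetilde{\mu_{u,\phi}}\in L^{q/2}(d\lambda)$, Lemma~\ref{lem:berezin-sufficient} together with \eqref{eq:all-p-lattice} gives the lattice condition. That condition makes $\mu_{u,\phi}$ a Carleson measure, so $\|W_{u,\phi}f\|^2=\int|f|^2d\mu_{u,\phi}$ is finite and $W_{u,\phi}$ is bounded, and then $W_{u,\phi}^*W_{u,\phi}=T_{\mu_{u,\phi}}$ holds via the quadratic form. Conversely, if $W_{u,\phi}\in S_q$, it is bounded from the start. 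Finally, \emph{(v)} of the Schatten facts gives $s_k(T_{\mu_{u,\phi}})=s_k(W_{u,\phi})^2$.
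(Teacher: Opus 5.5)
Your proposal is correct and follows the paper's route. The paper obtains this corollary directly from \eqref{eq:W-star-W} combined with the Berezin criterion of Theorem~\ref{thm:sharp-berezin} (with Theorem~\ref{Xiaoessential} covering $p\geq 1$) at exponent $p=q/2$, identifying the bracket with $\widetilde{\mu_{u,\phi}}$; your pushforward computation and your Carleson-measure justification of $W_{u,\phi}^*W_{u,\phi}=T_{\mu_{u,\phi}}$ fill in details the paper leaves implicit.
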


\begin{remark}
The above two results respectively give a geometric characterization of Schatten class weighted composition operators for \(0<q<2\) and an analytic characterization for \(q>\frac{2n}{n+1}\) which answering the question in \cite[Remark 4.3]{Xiao2026}.
\end{remark}

For a holomorphic self-map $\phi$ and a holomorphic weight $u$, write
$W_{u,\phi}f=u\,(f\circ\phi)$ and define the ordinary Berezin expression
\begin{equation}\label{eq:BW}
 B_{u,\phi}(z)=\frac{1}{K(z,z)}
 \int_{\B^n}|K(z,\phi(\zeta))|^2|u(\zeta)|^2\dd V(\zeta).
\end{equation}
For $q>0$, denote its integral by
\begin{equation}\label{eq:Iq}
 \mathcal I_q(u,\phi)=\int_{\B^n}B_{u,\phi}(z)^{q/2}d \lambda(z).
\end{equation}

\begin{proposition}\label{prop:counterexample}
Take $u\equiv1$ and $\phi\equiv0$. Then $W_{1,0}$ is a rank-one
orthogonal projection, and
\[
 W_{1,0}\in\Sq(A^2(\B^n)),\qquad
 \|W_{1,0}\|_{\Sq}^q=1\quad(q>0).
\]
Nevertheless,
\begin{equation}\label{eq:threshold}
 \mathcal I_q(1,0)<\infty
 \quad\Longleftrightarrow\quad q>\frac{2n}{n+1}.
\end{equation}
In particular, the implication $W_{u,\phi}\in\Sq\Rightarrow
\mathcal I_q(u,\phi)<\infty$ fails for every
$0<q\leq2n/(n+1)$, even on the unit ball.
\end{proposition}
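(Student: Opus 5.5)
With $u\equiv1$ and $\phi\equiv0$ the operator is $W_{1,0}f=f(0)$, the constant function. I will compute it and its Berezin expression explicitly on the unweighted Bergman space of $\B^n$, where $K(z,w)=c_n(1-\langle z,w\rangle)^{-(n+1)}$ with $c_n=1/|\B^n|$.

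\textbf{Step 1: $W_{1,0}$ is the rank-one projection onto constants.} For $f\in A^2(\B^n)$, the mean value property gives $f(0)=\langle f,1\rangle/\|1\|^2$. Hence
\[
W_{1,0}f=f(0)\cdot 1=\frac{\langle f,1\rangle}{\|1\|^2}\,1,
\]
which is the orthogonal projection onto the constants. It is self-adjoint and idempotent with rank one. Its only nonzero singular value is $1$, so $\|W_{1,0}\|_{S_q}^q=1$ for every $q>0$.

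\textbf{Step 2: compute $B_{1,0}$.} Since $\phi\equiv0$, we have $K(z,\phi(\zeta))=K(z,0)=c_n$ for every $\zeta$. Integrating over $\zeta$ gives
\[
B_{1,0}(z)=\frac{c_n^2|\B^n|}{K(z,z)}=c_n\,\frac{(1-|z|^2)^{n+1}}{c_n}=(1-|z|^2)^{n+1}.
\]
This agrees with $\widetilde{\delta_0}$ from Proposition \ref{prop:rank-one}, up to a constant. That is expected, because $\mu_{1,0}=|\B^n|\,\delta_0$ by \eqref{eq:pullback}.

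\textbf{Step 3: the integral $\mathcal I_q(1,0)$.} Using $d\lambda(z)=c_n(1-|z|^2)^{-(n+1)}dV(z)$,
\[
\mathcal I_q(1,0)\asymp\int_{\B^n}(1-|z|^2)^{(n+1)(q/2-1)}\,dV(z).
\]
Pass to polar coordinates. The radial integral $\int_0^1(1-\rho^2)^{\alpha}\rho^{2n-1}d\rho$ is finite if and only if $\alpha>-1$. Here $\alpha=(n+1)(q/2-1)$, and $\alpha>-1$ is equivalent to $q/2>n/(n+1)$, that is, $q>2n/(n+1)$. At the endpoint $\alpha=-1$ the integral diverges logarithmically.

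\textbf{Step 4: the final claim.} For $0<q\le 2n/(n+1)$ we have $W_{1,0}\in S_q$ by Step 1, while $\mathcal I_q(1,0)=\infty$ by Step 3. So the implication $W_{u,\phi}\in S_q\Rightarrow\mathcal I_q(u,\phi)<\infty$ fails in this range.

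There is no real obstacle in this argument. The only points needing care are the normalization of the constant function in Step 1 and the handling of the endpoint case in Step 3.
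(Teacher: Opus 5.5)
Your proposal is correct and follows essentially the same route as the paper. You identify $W_{1,0}$ as the orthogonal projection onto the constants via the mean value (reproducing) property at the origin, compute $B_{1,0}(z)=(1-|z|^2)^{n+1}$ exactly, and reduce $\mathcal I_q(1,0)$ to a radial integral with exponent $(n+1)(q/2-1)$, which is finite exactly when $q>2n/(n+1)$ and diverges at the endpoint; the paper's exact Gamma-function value above the threshold is not needed for the statement.
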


\begin{proof}

Since $\phi\equiv0$, we have
\[
 W_{1,0}f=f(0)\,\mathbf1.
\]
Let $e_0=v_n^{-1/2}\mathbf1$, so that $\|e_0\|_{A^2}=1$.
The reproducing identity at the origin, together with $K(w,0)=c_n$,
gives
\[
 f(0)=c_n\int_{\B^n}f(w)\dd V(w),
 \qquad
 \langle f,e_0\rangle=v_n^{1/2}f(0).
\]
Consequently,
\begin{equation}\label{eq:projection}
 W_{1,0}f=\langle f,e_0\rangle e_0.
\end{equation}
This proves that $W_{1,0}$ is the orthogonal projection onto the
constant functions. Its singular values are $1,0,0,\ldots$, proving
the asserted Schatten class and norm formula.

\medskip

Using $K(z,0)=c_n$ and $c_nv_n=1$, we compute directly from
\eqref{eq:BW} that
\begin{equation}\label{eq:explicit-B}
 \begin{aligned}
 B_{1,0}(z)
 &=\frac{1}{K(z,z)}\int_{\B^n}|K(z,0)|^2\dd V(\zeta)\\
 &=\frac{c_n^2v_n}{K(z,z)}
 =\frac{c_n}{K(z,z)}
 =(1-|z|^2)^{n+1}.
 \end{aligned}
\end{equation}
It follows that
\begin{equation}\label{eq:radial}
 \mathcal I_q(1,0)
 =c_n\int_{\B^n}(1-|z|^2)^{\alpha_q}\dd V(z),
 \qquad \alpha_q=(n+1)\left(\frac q2-1\right).
\end{equation}

\medskip
By Polar coordinates,
\begin{equation}\label{eq:beta-integral}
 \begin{aligned}
 \mathcal I_q(1,0)
 &=2n\int_0^1\rho^{2n-1}(1-\rho^2)^{\alpha_q}\dd\rho\\
 &=n\int_0^1t^{n-1}(1-t)^{\alpha_q}\dd t.
 \end{aligned}
\end{equation}
There is no singularity obstructing integration at $t=0$ because
$n\geq1$. Near $t=1$, the factor $t^{n-1}$ is bounded above and below
by positive constants. Thus the integral is finite precisely when
\[
 \alpha_q>-1
 \quad\Longleftrightarrow\quad
 \frac{(n+1)q}{2}>n
 \quad\Longleftrightarrow\quad
 q>\frac{2n}{n+1}.
\]
At $q=2n/(n+1)$, we have $\alpha_q=-1$ and
\[
 \mathcal I_q(1,0)
 \geq n\,2^{1-n}\int_{1/2}^1\frac{\dd t}{1-t}
 =\infty.
\]
For $0<q<2n/(n+1)$, $\alpha_q<-1$, so the same boundary integral
diverges as a negative power. This proves~\eqref{eq:threshold}.
\end{proof}

For completeness, above the threshold the exact finite value is
\begin{equation}\label{eq:gamma}
 \mathcal I_q(1,0)
 =\frac{\Gamma(n+1)\,
 \Gamma\!\left(\frac{(n+1)q}{2}-n\right)}
 {\Gamma\!\left(\frac{(n+1)q}{2}\right)}.
\end{equation}
In particular, $\mathcal I_2(1,0)=1$, in agreement with
$\|W_{1,0}\|_{S_2}^2=1$.

\subsection{Toeplitz interpretation and the scope of sharpness}

The pullback measure associated with a weighted composition operator is
defined on Borel sets by
\[
 \mu_{u,\phi}(E)=\int_{\phi^{-1}(E)}|u(\zeta)|^2\dd V(\zeta).
\]
In the present example,
\begin{equation}\label{eq:pointmass}
 \mu_{1,0}=v_n\delta_0.
\end{equation}
Indeed, all of the volume measure is mapped to the origin. The
corresponding positive Toeplitz operator satisfies
\[
 T_{v_n\delta_0}f(z)=v_nf(0)K(z,0)=f(0),
 \qquad
 W_{1,0}^*W_{1,0}=T_{v_n\delta_0}=W_{1,0}.
\]
Moreover, $B_{1,0}=\widetilde{\mu_{1,0}}$. The exponent conversion
\[
 W\in S_q\quad\Longleftrightarrow\quad W^*W\in S_{q/2}
\]
explains why the Toeplitz threshold $p>n/(n+1)$ becomes
$q>2n/(n+1)$ for weighted composition operators.

The obstruction for ordinary Berezin transforms of positive Toeplitz
operators is classical; see \cite[p. 312-316]{Zhu2007} for details.
The present note records its explicit realization by a weighted
composition operator and makes no claim of a new unit-ball phenomenon.

\begin{corollary}\label{cor:uniform}
Fix $n\geq1$ and $0<q\leq2n/(n+1)$. There is a smoothly bounded convex
domain $\Omega\subset\C^n$ of finite type and a weighted composition
operator $W_{u,\phi}$ on $A^2(\Omega)$ such that $W_{u,\phi}\in S_q$
but its ordinary Berezin integral with exponent $q/2$ is infinite.
Hence an equivalence of Schatten membership with that integral
condition, asserted uniformly for this class of domains, cannot be
extended to any such $q$.
\end{corollary}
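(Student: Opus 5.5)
The natural choice is $\Omega=\B^n$ itself, with $u\equiv1$ and $\phi\equiv0$, i.e.\ the operator $W_{1,0}$ of Proposition~\ref{prop:counterexample}. First I would check that $\B^n$ belongs to the class under consideration. It is smoothly bounded, since $\rho(z)=|z|^2-1$ is a smooth defining function with nonvanishing gradient on $\partial\B^n$. It is convex. It is of finite type: strong convexity means every complex line has contact order exactly $2$ with $\partial\B^n$, so one may take $M_{\B^n}=2$. Thus $\B^n$ is an admissible domain for every $n\geq1$.

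Next I would invoke Proposition~\ref{prop:counterexample} directly. It shows that $W_{1,0}$ is the orthogonal projection onto the constants, so $W_{1,0}\in\Sq(A^2(\B^n))$ with $\|W_{1,0}\|_{\Sq}^q=1$ for every $q>0$. It also shows that $\mathcal I_q(1,0)=\int_{\B^n}B_{1,0}(z)^{q/2}\,d\lambda(z)$ is finite only for $q>2n/(n+1)$. Hence for the fixed $0<q\leq 2n/(n+1)$ the integral diverges: at $q=2n/(n+1)$ this is the logarithmic endpoint divergence, and below it the integrand blows up like a negative power of order worse than $-1$.

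I would then identify $\mathcal I_q(1,0)$ with the ordinary Berezin integral at exponent $q/2$ of the associated Toeplitz symbol. By \eqref{eq:pointmass}, $\mu_{1,0}=v_n\delta_0$, and $B_{1,0}=\widetilde{\mu_{1,0}}$. Therefore $\mathcal I_q(1,0)=\int\widetilde{\mu_{1,0}}^{\,q/2}\,d\lambda=\infty$, while by \eqref{eq:W-star-W} we still have $T_{\mu_{1,0}}=W_{1,0}^*W_{1,0}\in S_{q/2}$. This is exactly the integral in Corollary~\ref{cor:composition-berezin}, specialized to $\Omega=\B^n$.

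Finally, for the ``hence'' clause, suppose a uniform equivalence were claimed for this class at some $q\leq 2n/(n+1)$. It would apply in particular to $\B^n$, and $W_{1,0}$ would contradict it. There is no real obstacle in this argument. The only point needing care is to state the membership of $\B^n$ in the class, namely that it is of finite type with type $2$, explicitly rather than tacitly.
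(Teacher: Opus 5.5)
Your proposal is correct and is the paper's argument: the paper's proof simply observes that $\B^n$ is smoothly bounded, strongly convex and of finite type, and then applies Proposition~\ref{prop:counterexample} to $W_{1,0}$ (with $u\equiv1$, $\phi\equiv0$). Your extra identification $B_{1,0}=\widetilde{\mu_{1,0}}$ with $\mu_{1,0}=v_n\delta_0$ only makes explicit what the paper already records in the Toeplitz-interpretation discussion immediately before the corollary.
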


\begin{proof}
The unit ball is smoothly bounded, strongly convex, and of finite
type. Apply Proposition~\ref{prop:counterexample}.
\end{proof}

\begin{remark}
This is a failure of necessity. The example does not contradict a
sufficiency statement from Berezin integrability to Schatten class. It excludes a uniform extension of the ordinary Berezin criterion, including at the endpoint;  Due to the limitations of Lemma \ref{Forelli-Rudin}, although this example cannot establish a sharp optimal range on every fixed finite-type convex domain, it does provide an obstruction showing that the equivalence fails outside the range of Corollary \ref{cor:composition-berezin}.
\end{remark}

\section*{Acknowledgments} This research work was supported by the National Natural Science Foundation of China~(Grant No.~12261023,~11861023.)




\begin{thebibliography}{99}


\bibitem{Abate2012} 
M. Abate, J. Raissy, A. Saracco, 
\newblock Toeplitz operators and Carleson measures in strongly pseudoconvex domains, 
\newblock\emph{J. Funct. Anal.,}  263 (2012) 3449-3491.

\bibitem{Abate2020}
 M. Abate, S. Mongodi, J. Raissy, 
 \newblock Toeplitz operators and skew Carleson measures for weighted Bergman spaces on strongly pseudoconvex domains, 
 \newblock \emph{J. Operator Theory,} 84 (2020), 339-364.

\bibitem{Bonami1999}
A. Bonami, M. Peloso, F. Symesak, 
\newblock On Hankel operators on Hardy and Bergman spaces and related questions
\newblock \emph{ Actes des rencontres d'analyse complexe}, Poitiers, France (2000),  131-153..

 \bibitem{Bonami2001}
A. Bonami, M. Peloso, F. Symesak, 
\newblock Factorization of Hardy spaces and Hankel operators on convex domains in \(\C^n\),
\newblock \emph{J. Geom. Anal.,} 11 (2001), 363-397.

 \bibitem{Bottcher2006}
A. B\"ottcher, B. Silbermann,
\newblock \emph{Analysis of Toeplitz Operators,}
\newblock { Springer, }  Berlin, 2006.

 \bibitem{Bruna1998}
J. Bruna, P. Charpentier, P. Dupain, 
\newblock Zero varieties for the Nevanlinna class in convex domains of finite type in \(\C^n\), 
\newblock \emph{Ann. Math.,} 147 (1998), 391-415.

\bibitem{Cuvckovic2006}
Z. \v{C}u\v{c}kovi\'{c}, J. Mcneal, 
\newblock Special Toeplitz operators on strongly pseudoconvex domains,
\newblock \emph{Rev. Mat. Iberoam,} 22 (2006), 851-866. 

    \bibitem{D'Angelo1982}
     J. D'Angeelo,
\newblock Real hypersurfaces, orders of contact, and applications,
\newblock \emph{Ann. Math.,} 115 (1982),  615--637.

\bibitem{Englis2000}
M. Engli\v{s}, 
\newblock Zeros of the Bergman kernel of Hartogs domains,
\newblock \emph{Comment. Math. Univ. Carolin, } 41 (2000), 199-202.

\bibitem{Englis2008}
M. Engli\v{s}, Toeplitz operators and weighted Bergman kernels,
\newblock \emph{J. Funct. Anal.,} 255 (2008), 1419-1457.

\bibitem{Fefferman1974} 
C. Fefferman,
\newblock The Bergman kernel and biholomorphic mappings of pseudoconvex domains, 
\newblock\emph{Invent. Math.,} 26 (1974), 1-65.

\bibitem{Krantz1995} 
S. Krantz, S. Li, 
\newblock Duality theorems for Hardy and Bergman spaces on convex domains of finite type, 
\newblock\emph{Ann. Inst. Fourier (Grenoble),} 45 (1995), 1305-1327.

\bibitem{Krantz2001}
S. Krantz,
\newblock \emph{Function Theory of Several Complex Variables, Second Edition, }
\newblock {Amer. Math. Soc.,}  Providence R. I., 2001.

\bibitem{Khan2019} 
T. Khan, J. Liu, P. Thuc,
\newblock Bergman-Toeplitz operators on weakly pseudoconvex domains, \newblock\emph{Math. Z.,} 291 (2019), 591-607. 

\bibitem{Khan2021} 
T. Khan, P. Tien, 
\newblock Bergman-Toeplitz operators between weighted \(L^p\) spaces on weakly pseudoconvex domains,
\newblock\emph{ J. Geom. Anal.,} 31 (2021), 4612-4640. 

\bibitem{Li2024}
H. Li, J. Liu, H. Wang,  
\newblock  Carleson measures on convex domains with smooth boundary of finite type,
\newblock \emph{Math. Narch.,}  297 (2024), 694-706.
\bibitem{Luecking1987}
D. Luecking,
\newblock Trace ideal criteria for Toeplitz operators,
\newblock \emph{J. Funct. Anal.}  73 (1987), 345-368.

\bibitem{McNeal1992}
J. McNeal,
\newblock Convex domains of finite type,
\newblock \emph{J. Funct. Anal.}  108 (1992), 361--373.

\bibitem{McNeal1994}
J. McNeal,
\newblock Estimates on the Bergman kernels of convex domains,
\newblock \emph{Adv. Math.} 109 (1994), 108--139.

\bibitem{McNealStein1994}
J. McNeal, E. Stein,
\newblock Mapping properties of the Bergman projection on convex domains of finite type,
\newblock \emph{Duke Math. J.} 73 (1994), 177--199.

\bibitem{McnealStein1997}
J. McNeal, E. Stein,
\newblock The Szeg\"o projection on convex domains,
\newblock \emph{Math. Z.,} 224 (1997), 519-553.

\bibitem{Pau2014}
J. Pau,
\newblock A remark on Schatten class Toeplitz operators on Bergman spaces,
\newblock \emph{Proc. Amer. Math. Soc.,} 142 (2014), 2763-2768.


\bibitem{Simon2005}
B. Simon,
\newblock \emph{Trace Ideals and Their applications, Second Edition, }
\newblock {Amer. Math. Soc.,}  Providence R. I., 2005.




\bibitem{Xiao2026} 
J. Xiao, W. Yang, C. Yuan, 
\newblock Geomerical Toeplitz operators and Carleson embeddings over smoothly bounded convex domains of finite type in \(\mathbb{C}^n\), 
\newblock \emph{Sci. China Math.,} 69 (2026),  985-1010.

\bibitem{Zhu2007NJM} 
K. Zhu, 
\newblock Schatten class Toeplitz operators on weighted Bergman spaces of
the unit ball,
 \newblock \emph{New York J. Math., } 13 (2007),  299-316.


\bibitem{Zhu2007}
K. Zhu,
\newblock \emph{Operator Theory in Function Spaces, Second Edition, }
\newblock {Amer. Math. Soc.,}  Providence R. I., 2007.

\end{thebibliography}

\end{document}